\documentclass{article}
\usepackage{graphicx} 
\usepackage{amsmath,amssymb,amsthm,mathtools}
\usepackage{mathrsfs}
\usepackage{enumitem}
\usepackage{hyperref}
\usepackage{cleveref}
\usepackage{xcolor}
\usepackage{tensor} 

\usepackage{hyperref}
\hypersetup{urlcolor = blue, citecolor=blue, colorlinks=true}

\allowdisplaybreaks[3]

\theoremstyle{plain}
\newtheorem{fthm}{Theorem}[section]
\newtheorem*{fthm*}{Theorem}
\newtheorem{flemma}{Lemma}[section]
\newtheorem*{flemma*}{Lemma}
\newtheorem{fprop}{Proposition}[section]
\newtheorem*{fprop*}{Proposition}
\newtheorem{fcor}{Corollary}[section]
\newtheorem*{fcor*}{Corollary}

\theoremstyle{definition}
\newtheorem{fdefi}{Definition}[section]
\newtheorem*{fdefi*}{Definition}

\newtheorem*{fexmp*}{Example}

\newtheorem*{fex*}{Exercise}

\theoremstyle{remark}
\newtheorem{frmk}{Remark}[section]
\newtheorem*{frmk*}{Remark}

\newtheorem*{fconj*}{Conjecture}

\newtheorem*{fclaim*}{Claim}

\newtheorem*{fquest*}{Question}

\DeclareMathOperator\Ric{Ric}
\DeclareMathOperator\Riem{Riem}
\DeclareMathOperator\Sc{Sec} 
\DeclareMathOperator{\Hess}{Hess}
\DeclareMathOperator{\Id}{Id}
\DeclareMathOperator{\tr}{tr}
\DeclareMathOperator{\Gr}{Gr}
\DeclareMathOperator{\Span}{Span}
\DeclareMathOperator{\supp}{supp}
\DeclareMathOperator{\Vol}{Vol}

\newcommand{\cPac}{\mathcal{P}^{\text{ac}}_{\text{c}}}
\newcommand{\cW}{\mathscr{W}}  
\newcommand{\cH}{\mathcal{H}}  

\newcommand{\tJ}{\textbf{J}} 
\newcommand{\tU}{\textbf{U}} 
\newcommand{\tR}{\textbf{R}}  
\newcommand{\tH}{\textbf{H}} 

\newcommand{\tW}{\textbf{W}} 
\newcommand{\tB}{\textbf{B}} 

\newcommand{\Address}{{
  \bigskip
  \footnotesize

  Chao-Ming Lin, \textsc{Department of Mathematics, National Taiwan University, Taiwan.}\par\nopagebreak
  \textit{E-mail address:} \href{chaominl@ntu.edu.tw}{chaominl@ntu.edu.tw}\par\nopagebreak
  \textit{Personal Website:} \href{https://chaominl.github.io}{https://chaominl.github.io}

  \bigskip
  Kai-Hsiang Wang, \textsc{Faculty of Mathematics, Technion, Israel.}\par\nopagebreak
  \textit{E-mail address:}
  \href{khwang2025@campus.technion.ac.il}{khwang2025@campus.technion.ac.il}\par\nopagebreak
  \textit{Personal Website:}
  \href{https://k-hwang.github.io}{https://k-hwang.github.io}
}}

\title{Lower Bound for Weighted Intermediate Ricci Curvature and Tensorial Entropy Convexity}
\author{Chao-Ming Lin and Kai-Hsiang Wang}
\date{}

\begin{document}
\maketitle

\begin{abstract}
    We introduce a weighted version of intermediate Ricci curvature and establish several equivalent characterizations of its lower bound. As an application, we generalize the results of Aishwarya--Rotem--Shenfeld \cite{aishwarya2025sectionalcurvature} by deriving intrinsic-dimensional evolution variational inequalities and the corresponding Wasserstein contraction estimates for the heat flow.
    We also compare our characterization with that of Ketterer--Mondino \cite{ketterer_sectional_2018} via lower-dimensional optimal transport.
\end{abstract}


\section{Introduction}
One of the most profound achievements of optimal transport theory is
the connection it establishes between the geometry of a space and the
behavior of functionals on its associated Wasserstein space. Let
$(M,g)$ be a smooth complete Riemannian manifold without boundary,
equipped with the induced distance $d$ and volume measure
$\Vol_g$. Associated with $M$ is the Wasserstein space
$(\mathcal P_2(M),\mathscr W_2)$, where $\mathcal P_2(M)$ denotes the
space of probability measures on $M$ with finite second moment, and
$\mathscr W_2$ is the $L^2$-Wasserstein distance defined by
\begin{align*}
\label{eq:1.1}
\mathscr W_2(\mu_0,\mu_1) \coloneqq
\left(
\inf_{\pi\in\Pi(\mu_0,\mu_1)}
\int_{M\times M} d^2(x,y)\,d\pi(x,y)
\right)^{1/2},
\tag{1.1}
\end{align*}
where $\mu_0, \mu_1 \in \mathcal P_2(M)$ and $\Pi(\mu_0,\mu_1)$ denotes the collection of all couplings of
$\mu_0$ and $\mu_1$.

A central object in optimal transport is the \emph{Boltzmann entropy
functional} defined by
\begin{align*}
\label{eq:1.2}
H(\mu) \coloneqq
\begin{cases}
\displaystyle
\int_M
\log\!\left(\frac{d\mu}{d\Vol_g}\right)\,d\mu,
& \text{if }\mu\ll\Vol_g,\\[1em]
+\infty,
& \text{otherwise.}
\end{cases}
\tag{1.2}
\end{align*}
The celebrated works of Otto and Villani
\cite{otto_generalization_2000},
Cordero-Erausquin, McCann, and Schmuckenschläger
\cite{cordero-erausquin_riemannian_2001}, and
von Renesse and Sturm
\cite{von_renesse_transport_2005}
showed that the Boltzmann entropy functional is displacement convex
along Wasserstein geodesics if and only if $(M,g)$ has nonnegative
Ricci curvature.

A natural question is whether stronger curvature conditions admit analogous characterizations in terms of optimal transport and functional inequalities. This question was answered affirmatively by Aishwarya, Rotem, and Shenfeld in \cite{aishwarya2025sectionalcurvature}, where they characterized nonnegative sectional curvature of Riemannian manifolds by introducing
\begin{enumerate}
    \item Matrix displacement convexity in optimal transport theory;
    \item The nonnegativity of a tensorial Bakry--\'Emery operator.
\end{enumerate}
Their results may be viewed as tensorial analogues of the classical characterizations of nonnegative Ricci curvature arising from optimal transport theory \cite{otto_generalization_2000,cordero-erausquin_riemannian_2001,von_renesse_transport_2005} and the Bakry--Émery theory \cite{bakry_diffusions_1985}.

\medskip

In this paper, we extend the results of \cite{aishwarya2025sectionalcurvature} by establishing analogous characterizations of lower bounds for intermediate Ricci curvature, which interpolates between sectional and Ricci curvature and therefore includes both as special cases. Furthermore, our results apply to arbitrary lower curvature bounds, rather than only the nonnegative case. We also work in the more general setting of weighted Riemannian manifolds, with the classical unweighted case corresponding to a constant weight function. To state our main results, we first introduce the necessary terminology.

\begin{fdefi}[Weighted intermediate Ricci curvature]
Let \[(M^n,g,e^{-f}d\Vol_g)\] be a weighted Riemannian manifold,
let $x\in M$, let $k\in\{1,\dots,n\}$,
and let $N>k$.
Given a $k$-dimensional subspace $\Sigma\subset T_xM$ and a vector
$v\in T_xM$, we define the \emph{$N$-Bakry--Émery intermediate $k$-Ricci curvature} by
\begin{align*}
\label{eq:1.3}
\Ric_{k,f}^{N}(\Sigma,v)
\coloneqq
\Ric_k(\Sigma,v)
+
\Hess f(v,v)
-
\frac{1}{N-k}df(v)^2, \tag{1.3}
\end{align*}
where $\Ric_k$ denotes the intermediate $k$-Ricci curvature (see Definition \ref{def:interm_ric}).
We say that $\Ric^N_{k, f} \geq K$ if, for every $x \in M$, every vector $v \in T_x M$, and every $k$-dimensional subspace $\Sigma \subset T_x M$, we have
\begin{align*}
    \Ric^N_{k, f} (\Sigma, v) \geq  K |v|^2.
\end{align*}
\end{fdefi}
In the literature, several weighted curvature notions have been introduced. In particular, Bakry and Émery \cite{bakry_diffusions_1985} introduced the $N$-Bakry--Émery Ricci tensor for $n<N\le\infty$. This notion was subsequently extended to other values of $N$, including negative values; see, for example, \cite{wylie2016geometryriemannianmanifoldsdensity} and the references therein. On the other hand, Kennard, Wylie, and Yeroshkin \cite{kennard_weighted_2019} introduced the notions of positive and negative weighted sectional curvature.

\medskip

Following the approach of Aishwarya--Rotem--Shenfeld \cite{aishwarya2025sectionalcurvature}, we introduce a weighted $k$-Boltzmann entropy tensor associated with optimal transport. Let \((\mu_t)_{t\in[0,1]}\) be a Wasserstein geodesic in
\((\cPac(M),\mathscr W_2)\), where
\begin{align*}
\cPac(M)
\coloneqq
\bigl\{
\mu\in\mathcal P_2(M) \colon
\mu\ll\Vol_g \ \text{ and }
\mu\text{ is compactly supported}
\bigr\}.
\end{align*}
By the Brenier--McCann theorem, there exists a
\(\frac{d^2}{2}\)-concave function
\(-\theta:M\to\mathbb R\cup\{-\infty\}\) such that
\[
\mu_t=(F_t)_\sharp\mu_0,
\qquad
F_t(x)=\exp_x\bigl(t\nabla\theta(x)\bigr),
\qquad
t\in[0,1].
\]For every
\(x\in D(\mu_0,\mu_1)\),
where \(D(\mu_0,\mu_1)\) is the full
\(\mu_0\)-measure set on which
\(\Hess\theta(x)\) exists,
the minimizing geodesic
$\gamma(s)=F_s(x)$
induces a family of Jacobi field matrices
\((\tJ_s(x))_{s\in[0,1]}\) (see Section \ref{sec:2.4.2} for details).
We define
\[
\tU_s(x)
\coloneqq
\dot{\tJ}_s(x)\tJ_s(x)^{-1},
\]
where the dot denotes differentiation with respect to \(s\).
We then define the \emph{Boltzmann entropy tensor} associated with
\((\mu_0,\mu_1)\) by
\begin{align*}
\label{eq:1.4}
\tH_{t}^{\mu_0\rightarrow\mu_1}(x)
\coloneqq
-
\int_0^t
\tU_s(x)\,ds, \tag{1.4}
\end{align*}
and define the corresponding \emph{weighted \(k\)-Boltzmann entropy
tensor} by
\begin{align*}
\label{eq:1.5}
\tH_{t;k,f}^{\mu_0\rightarrow\mu_1}(x)
\coloneqq
\tH_{t}^{\mu_0\rightarrow\mu_1}(x)
+
\frac{
f\bigl(F_t(x)\bigr)-f(x)
}{k}\Id, \tag{1.5}
\end{align*}
for every \(x\in D(\mu_0,\mu_1)\).
We also define the \emph{weighted $k$-Boltzmann entropy functional} by
\begin{align*}
\label{eq:1.6}
    H_{k, f}(\mu) \coloneqq H(\mu) + \frac{n}{k} \int_M f \, d \mu, \tag{1.6}
\end{align*}where $\mu \in \cPac(M)$. The motivation for this definition is that the weighted
\(k\)-Boltzmann entropy functional can be recovered from the trace of
the weighted \(k\)-Boltzmann entropy tensor. More precisely, we have
    \begin{align*}
    \label{eq:1.7}
        &\kern-2em H_{k, f}(\mu_t) \\
        &= H_{k, f}(\mu_0) +  \int_M \tr \bigl( \tH^{\mu_0 \rightarrow \mu_1}_{t; k, f}(x)  \bigr)  \, d \mu_0(x) \\
        &= H_{k, f}(\mu_0)  +  \frac{n}{k} \int_M  \int_{\Gr(k, T_x M)} \tr\Bigl({\tH}^{\mu_0\to\mu_1}_{t;k,f}(x)\big|_\Sigma
\Bigr)  \, d \mu_{\Gr}(\Sigma) \, d \mu_0(x), \tag{1.7}
    \end{align*}where we denote by $\Gr(k, T_x M)$ the Grassmannian of $k$-dimensional subspace of $T_x M$, and \(\mu_{\Gr}\) the Haar probability measure on \(\Gr(k,T_xM)\).

\medskip

Inspired by the classical Bakry--Émery \(\Gamma_2\)-operator, we
introduce its tensorial analogue
\begin{align*}
\label{eq:1.8}
\widetilde{\Gamma}_2(\varphi)
\coloneqq
\frac12 \nabla^2 |\nabla \varphi|^2
-
\nabla_{\nabla \varphi}\nabla^2\varphi, \tag{1.8}
\end{align*}
as well as the tensorial weighted $(N,k)$-Bakry--Émery operator:
\begin{align*}
\label{eq:1.9}
\kern-2em \widetilde{\Gamma}_{2; k, f}^N (\varphi) \coloneqq \frac12 \nabla^2 |\nabla \varphi|^2
-
\nabla_{\nabla \varphi}\nabla^2\varphi + \frac{1}{k} \Bigl(  \Hess f( \nabla \varphi, \nabla \varphi)  - \frac{df(\nabla \varphi)^2}{N- k}  \Bigr) \Id.  \tag{1.9}
\end{align*}

Our main theorem shows that lower bounds on the weighted intermediate
Ricci curvature admit several equivalent characterizations, including tensorial Bochner inequalities and differential and
convexity properties of the weighted \(k\)-Boltzmann entropy tensor.

\hypertarget{T:1.1}{
\begin{fthm}
Let $(M^n,g,e^{-f}d\Vol_g)$ be a smooth complete weighted Riemannian manifold without boundary.
Then the following conditions are equivalent.
\begin{enumerate}
\item
The $N$-Bakry--Émery intermediate $k$-Ricci curvature satisfies
\[
\Ric_{k,f}^N\ge K.
\]
\item
For every $\psi\in C_c^\infty(M)$,
every $x\in M$, and every $k$-dimensional subspace
$\Sigma\subset T_xM$,
\begin{align*}
\label{eq:1.10}
\tr\Bigl(
\widetilde{\Gamma}_{2,k}^N(\psi)(x)\big|_\Sigma
\Bigr)
\ge
K|\nabla\psi(x)|^2.   \tag{1.10}
\end{align*}
\item
For every $\psi\in C_c^\infty(M)$,
every $x\in M$, and every $k$-dimensional subspace
$\Sigma\subset T_xM$,
\begin{align*}
\label{eq:1.11}
\tr\Bigl(
\widetilde{\Gamma}_{2,k}^N(\psi)(x)\big|_\Sigma
\Bigr)
\ge
\tr\Bigl(
\bigl((\nabla^2\psi)(x)\bigr)^2\big|_\Sigma
\Bigr)
+
K|\nabla\psi(x)|^2.  \tag{1.11}
\end{align*}
\item
Let
$\mu_0,\mu_1\in\mathcal P_c^{\mathrm{ac}}(M)$,
and let
$F_t(x)=\exp_x\bigl(t\nabla\theta(x)\bigr)$
be the optimal transport interpolation satisfying
$\mu_1=(F_1)_\sharp\mu_0$.
For any
$x\in D(\mu_0,\mu_1)$,
every $t\in[0,1]$,
and every
$k$-dimensional subspace
$\Sigma\subset T_xM$,
we have
\begin{align*}
\label{eq:1.12}
\tr\Bigl(
\ddot{\tH}^{\mu_0\to\mu_1}_{t;k,f}(x)\big|_\Sigma
\Bigr)
\ge
K|\nabla\theta(x)|^2
+
\frac{1}{N-k}df(\dot\gamma(t))^2,  \tag{1.12}
\end{align*}where $\gamma(t) \coloneqq \exp_x(t \nabla \theta(x))$.
\item
Under the same assumptions as in {\rm(4)},
\begin{align*}
\label{eq:1.13}
\tr\Bigl(
\ddot{\tH}^{\mu_0\to\mu_1}_{t;k,f}(x)\big|_\Sigma
\Bigr)
\ge
\frac1N
\Bigl[
\tr\Bigl(
\dot{\tH}^{\mu_0\to\mu_1}_{t;k,f}(x)\big|_\Sigma
\Bigr)
\Bigr]^2
+
K|\nabla\theta(x)|^2.   \tag{1.13}
\end{align*}
\item
Under the same assumptions as in condition {\rm(4)}, the function
\begin{align*}
\label{eq:1.14}
&\kern-4em t\longmapsto
e^{-\frac1N
\tr\bigl(
\tH^{\mu_0\to\mu_1}_{t;k,f}(x)\big|_\Sigma
\bigr)} \\
&\kern2em +
\frac KN
|\nabla\theta(x)|^2
\int_0^t
(t-s)
e^{-\frac1N
\tr\bigl(
\tH^{\mu_0\to\mu_1}_{s;k,f}(x)\big|_\Sigma
\bigr)}
\,ds  \tag{1.14}
\end{align*}
is concave on $[0,1]$.
\end{enumerate}
\end{fthm}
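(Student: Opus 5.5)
We prove the implications (1)$\Leftrightarrow$(2)$\Leftrightarrow$(3) on the Bochner side and (1)$\Rightarrow$(4)$\Rightarrow$(5)$\Leftrightarrow$(6)$\Rightarrow$(1) on the transport side. Both sides rest on one pointwise identity and one pointwise inequality.

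\textbf{Bochner side.} The first ingredient is the tensorial Bochner formula
\[
\widetilde\Gamma_2(\psi)=(\nabla^2\psi)^2+\tR(\cdot,\nabla\psi,\nabla\psi,\cdot),
\]
valid as symmetric endomorphisms. It follows by differentiating $|\nabla\psi|^2$ twice and commuting covariant derivatives through the Ricci identity. Tracing over $\Sigma$ gives
\[
\tr\bigl(\widetilde\Gamma^N_{2;k,f}(\psi)|_\Sigma\bigr)=\tr\bigl((\nabla^2\psi)^2|_\Sigma\bigr)+\Ric^N_{k,f}(\Sigma,\nabla\psi).
\]
Here I use Definition \ref{def:interm_ric}, namely $\Ric_k(\Sigma,v)=\sum_{e_i\in\Sigma}\langle \tR(e_i,v)v,e_i\rangle$. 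The $\tfrac1k$-scaled weight term traces to exactly $\Hess f(v,v)-\frac{df(v)^2}{N-k}$.

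With this identity:
\begin{itemize}
\item (1)$\Rightarrow$(3) is immediate.
\item (3)$\Rightarrow$(2) holds because $\tr\bigl((\nabla^2\psi)^2|_\Sigma\bigr)=\sum_{i\le k}|\nabla^2\psi(e_i)|^2\ge0$.
\item (2)$\Rightarrow$(1): given $x$, $v$ and $\Sigma$, choose $\psi\in C_c^\infty$ with $\nabla\psi(x)=v$ and $\nabla^2\psi(x)=0$, for instance a cutoff times a linear function in normal coordinates. The Hessian term then vanishes and we recover $\Ric^N_{k,f}(\Sigma,v)\ge K|v|^2$.
\end{itemize}

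\textbf{Transport side.} The Jacobi matrices satisfy $\ddot\tJ+\tR_{\dot\gamma}\tJ=0$. Hence $\tU$ is symmetric (since $\tU_0=\Hess\theta$ is symmetric) and solves the Riccati equation
\[
\dot\tU+\tU^2+\tR_{\dot\gamma}=0.
\]
Differentiating \eqref{eq:1.5} then gives
\[
\dot\tH_{t;k,f}=-\tU_t+\tfrac1k df(\dot\gamma)\Id,\qquad \ddot\tH_{t;k,f}=\tU_t^2+\tR_{\dot\gamma}+\tfrac1k\Hess f(\dot\gamma,\dot\gamma)\Id.
\]
Here I use that $\ddot\gamma=0$, and I interpret traces on $\Sigma$ after parallel transport, i.e.\ as in Section \ref{sec:2.4.2} with respect to the parallel frame along $\gamma$. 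Tracing over $\Sigma$ and using $|\dot\gamma|=|\nabla\theta(x)|$:
\[
\tr\ddot\tH|_\Sigma=\tr(\tU^2|_\Sigma)+\Ric^N_{k,f}(\Sigma,\dot\gamma)+\tfrac{1}{N-k}df(\dot\gamma)^2 .
\]
This gives (1)$\Rightarrow$(4) immediately, because $\tr(\tU^2|_\Sigma)\ge0$.

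\textbf{The key inequality and (1)$\Rightarrow$(5).} The second ingredient is
\[
\tr(\tU^2|_\Sigma)\ge \tr(P_\Sigma\tU P_\Sigma)^2\ge \tfrac1k(\tr\tU|_\Sigma)^2,
\]
by Cauchy--Schwarz. Writing $a=-\tr\tU|_\Sigma$ and $b=df(\dot\gamma)$, we have $\tr\dot\tH|_\Sigma=a+b$, and the elementary inequality
\[
\frac{a^2}{k}+\frac{b^2}{N-k}\ge\frac{(a+b)^2}{N}
\]
yields (5) from (1).

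\textbf{(4)$\Rightarrow$(5).} Retain $\tr(\tU^2|_\Sigma)$ in the identity above: (4) only asserts the inequality for $\tr\ddot\tH$. The cleaner route is to prove (4)$\Rightarrow$(1) directly. Take $\mu_0$ concentrated near $x$ and $\theta$ with $\nabla\theta(x)=v$ and $\Hess\theta(x)=0$; a small multiple $\epsilon\theta$ is $\tfrac{d^2}{2}$-concave by the standard smallness argument. At $t=0$ we then have $\tU_0=0$, and the identity gives $\Ric^N_{k,f}(\Sigma,v)\ge K|v|^2$. The same test transport gives (5)$\Rightarrow$(1), since $\tr\dot\tH_0|_\Sigma=df(v)$. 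I expect this realization step (producing admissible $\theta$ with prescribed $1$-jet and $x\in D(\mu_0,\mu_1)$) to be the main technical obstacle, but it is standard, as in McCann and Cordero-Erausquin--McCann--Schmuckenschläger.

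\textbf{(5)$\Leftrightarrow$(6).} Set $h(t)=\tr\tH_{t;k,f}|_\Sigma$, $u=e^{-h/N}$ and $c=\frac KN|\nabla\theta|^2$. Then
\[
u''=-\frac{u}{N}\Bigl(h''-\frac{(h')^2}{N}\Bigr),
\]
so (5) is equivalent to $u''\le -cu$. Since $\frac{d^2}{dt^2}\int_0^t(t-s)u(s)\,ds=u(t)$, the function in \eqref{eq:1.14} is $u+c\int_0^t(t-s)u\,ds$, and its second derivative is $u''+cu$. Concavity of \eqref{eq:1.14} is therefore exactly $u''+cu\le0$, which is (5). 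Both functions are smooth in $t$ for $x\in D(\mu_0,\mu_1)$, so the pointwise second-derivative characterization of concavity applies.
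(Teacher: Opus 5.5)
Your Bochner-side argument, the traced Riccati identity, (1)$\Leftrightarrow$(4), (1)$\Rightarrow$(5) and (5)$\Leftrightarrow$(6) all match the paper. The gap is your claim that ``the same test transport gives (5)$\Rightarrow$(1)''.

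With $\nabla\theta(x)=v$ and $\Hess\theta(x)=0$ you have $\tU_0=0$. At $t=0$ the left side of condition (5) is then $\Ric^N_{k,f}(\Sigma,v)+\frac{1}{N-k}df(v)^2$, and the right side is $\frac1N df(v)^2+K|v|^2$. So condition (5) only gives
\[
\Ric^N_{k,f}(\Sigma,v)\ \ge\ K|v|^2-\frac{k}{N(N-k)}\,df(v)^2 .
\]
This is strictly weaker than (1) whenever $df(v)\neq 0$. Rescaling $v$ cannot remove the defect, because it is quadratic in $v$, just like $K|v|^2$. In your notation, $a=0$, $b=df(v)$ is not an equality case of $\frac{a^2}{k}+\frac{b^2}{N-k}\ge\frac{(a+b)^2}{N}$. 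So the test transport that is sharp for (4) is not sharp for (5). Consequently, for nonconstant $f$ you have not shown that (5), or (6), implies (1), and the cycle of equivalences does not close.

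The repair, which is what the paper does, is to choose the $2$-jet of the test potential so that both inequalities used in (1)$\Rightarrow$(5) become equalities at $t=0$. Take $\nabla\theta(x)=v$ and $\Hess\theta(x)=-\frac{df(v)}{N-k}\Id$. This relation is homogeneous of degree one in $\theta$, so it survives the rescaling needed for $\frac{d^2}{2}$-concavity. Then:

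$\tU_0|_\Sigma$ is a multiple of the identity, which gives equality in Cauchy--Schwarz.

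$a=\frac{k}{N-k}df(v)$, so $a/k=b/(N-k)$, which gives equality in the elementary inequality.

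One then computes
\[
\tr\bigl(\ddot{\tH}_{0;k,f}|_\Sigma\bigr)=\Ric^N_{k,f}(\Sigma,v)+\frac{N}{(N-k)^2}df(v)^2,\qquad \frac1N\Bigl[\tr\bigl(\dot{\tH}_{0;k,f}|_\Sigma\bigr)\Bigr]^2=\frac{N}{(N-k)^2}df(v)^2,
\]
so condition (5) at $t=0$ yields exactly $\Ric^N_{k,f}(\Sigma,v)\ge K|v|^2$.
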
}

As an immediate consequence of our main theorem, we obtain the
following characterization of nonnegative intermediate
\(k\)-Ricci curvature in the unweighted setting.
\begin{fthm}
    Let $(M, g)$ be a smooth complete Riemannian manifold without boundary. Then, the following are equivalent.
    \begin{enumerate}
        \item $M$ has nonnegative intermediate $k$-Ricci curvature.
        \item Let
$\mu_0,\mu_1\in\mathcal P_c^{\mathrm{ac}}(M)$,
and let
$F_t(x)=\exp_x\bigl(t\nabla\theta(x)\bigr)$
be the optimal transport interpolation satisfying
$\mu_1=(F_1)_\sharp\mu_0$.
For any
$x\in D(\mu_0,\mu_1)$,
every $t\in[0,1]$,
and every
$k$-dimensional subspace
$\Sigma\subset T_xM$,
we have
\begin{align*}
\label{eq:1.15}
\tr\Bigl(
\ddot{\tH}^{\mu_0\to\mu_1}_{t}(x)\big|_\Sigma
\Bigr)
\ge
\frac1k
\Bigl[
\tr\Bigl(
\dot{\tH}^{\mu_0\to\mu_1}_{t}(x)\big|_\Sigma
\Bigr)
\Bigr]^2. \tag{1.15}
\end{align*}
    \end{enumerate}
\end{fthm}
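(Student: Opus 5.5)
The plan is to deduce the statement directly from Theorem \hyperlink{T:1.1}{1.1}, applied with a constant weight $f\equiv c$ and $K=0$, and with the parameter $N$ allowed to vary over $(k,\infty)$. For constant $f$, all derivative terms in \eqref{eq:1.3} vanish, so
\[
\Ric^N_{k,f}(\Sigma,v)=\Ric_k(\Sigma,v)\quad\text{for every } N>k .
\]
Hence condition (1) of Theorem \hyperlink{T:1.1}{1.1} with $K=0$ is exactly nonnegative intermediate $k$-Ricci curvature, for every choice of $N>k$. Likewise $f(F_t(x))-f(x)=0$, so by \eqref{eq:1.5} we have $\tH^{\mu_0\to\mu_1}_{t;k,f}=\tH^{\mu_0\to\mu_1}_t$, and its $t$-derivatives agree as well. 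Condition (5) of Theorem \hyperlink{T:1.1}{1.1} therefore reads
\[
\tr\bigl(\ddot{\tH}_t|_\Sigma\bigr)\ge \tfrac1N\bigl[\tr\bigl(\dot{\tH}_t|_\Sigma\bigr)\bigr]^2
\]
for every $x\in D(\mu_0,\mu_1)$, every $t\in[0,1]$ and every $\Sigma$.

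\emph{(1)$\Rightarrow$(2).} Assume $\Ric_k\ge 0$. By the above, condition (1) of Theorem \hyperlink{T:1.1}{1.1} holds with $K=0$ for every $N>k$. By (1)$\Rightarrow$(5), we get at each fixed $x$, $t$, $\Sigma$ the inequality
\[
\tr(\ddot{\tH}_t|_\Sigma)\ge \tfrac1N[\tr(\dot{\tH}_t|_\Sigma)]^2 \quad\text{for all } N>k .
\]
The quantities $\tr(\ddot{\tH}_t|_\Sigma)$ and $\tr(\dot{\tH}_t|_\Sigma)$ do not depend on $N$. The map $N\mapsto \frac1N[\cdot]^2$ is continuous, so letting $N\downarrow k$ yields \eqref{eq:1.15}.

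\emph{(2)$\Rightarrow$(1).} Assume \eqref{eq:1.15}. Fix any $N>k$, for instance $N=k+1$. Since $\frac1k\ge\frac1N$ and the bracket is a square, \eqref{eq:1.15} implies
\[
\tr(\ddot{\tH}_t|_\Sigma)\ge \tfrac1N[\tr(\dot{\tH}_t|_\Sigma)]^2 ,
\]
which is condition (5) of Theorem \hyperlink{T:1.1}{1.1} with $K=0$ and $f$ constant. By (5)$\Rightarrow$(1), $\Ric^N_{k,f}\ge 0$, that is $\Ric_k\ge0$.

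There is no genuine obstacle here. The only point requiring care is the limit $N\downarrow k$, since Theorem \hyperlink{T:1.1}{1.1} is stated only for $N>k$. This is harmless because the inequality is pointwise in $(x,t,\Sigma)$ and the quantities involved are independent of $N$. One should also note that $D(\mu_0,\mu_1)$ and $\tH$ are defined independently of $f$ and $N$, so the constant-weight specialization is legitimate.
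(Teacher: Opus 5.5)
Your argument is correct and is essentially the paper's proof: the statement is obtained by specializing Theorem 1.1, namely the equivalence (1)$\Leftrightarrow$(5), to a constant weight $f$ and $K=0$. The one difference is in how the endpoint $N=k$ is handled. The paper takes $N=k$ directly, relying on its convention (start of Section 3 and the remark after Corollary 3.1) that $N=k$ is admissible when $f$ is constant. You instead use the limit $N\downarrow k$ for (1)$\Rightarrow$(2) and the weakening $\frac1k\ge\frac1N$ for (2)$\Rightarrow$(1). This applies Theorem 1.1 only in its stated range $N>k$, so you avoid having to recheck that its proof survives at the endpoint.
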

In view of the identity \eqref{eq:1.7}, the case \(k=n\) recovers the
classical characterization that the Boltzmann entropy functional is
displacement convex along Wasserstein geodesics if and only if
\((M,g)\) has nonnegative Ricci curvature.

\medskip

In addition, one of the principal applications of
Theorem~\hyperlink{T:1.1}{1.1} is the derivation of functional
inequalities under lower curvature bounds. Let
\((P_t)_{t\ge0}\) denote the heat semigroup generated by the
Laplace--Beltrami operator. A fundamental consequence of the
displacement convexity of the Boltzmann entropy is the
\emph{dimensional evolution variational inequality}, established by
Daneri--Savar\'e \cite{daneri2008eulerian} and
Erbar--Kuwada--Sturm \cite{erbar2015equivalence}. It states that,
when the Ricci curvature is nonnegative, we have
\begin{align*}
\label{eq:1.16}
\frac{d}{d \tau} \cW_2^2(P_\tau \mu_0,\mu_1)
\le
2n \Bigl(1 - \exp \Bigl(-\frac{1}{n}\bigl[H(\mu_1)-H(P_\tau \mu_0)\bigr] \Bigr) \Bigr). \tag{1.16}
\end{align*}

Building on the intrinsic-dimensional refinement of
estimate \eqref{eq:1.16} obtained by
Aishwarya--Rotem--Shenfeld
\cite{aishwarya2025sectionalcurvature},
we extend the dimensional evolution variational inequality to the
setting of weighted intermediate Ricci curvature.

\hypertarget{T:1.3}{
\begin{fthm} 
Let $(M^n,g,e^{-f}d\Vol_g)$ be a smooth compact weighted Riemannian manifold without boundary satisfying $\Ric^N_{k, f} \geq 0$. Let $\mu_0,\mu_1 \in \cPac(M)$, and let $(P_\tau)_{\tau\geq0}$ be the heat semigroup on $M$. Then, for every $T>0$ and for almost every
$\tau\in(0,T)$, we have
\begin{align*}
\label{eq:1.17}
&\kern-2em
\frac{d}{d\tau}\mathscr W_2^2(P_\tau\mu_0,\mu_1)
\\
&\le
\frac{2nN}{k}
-
\frac{2N}{\binom{n-1}{k-1}}
\int_M
\sigma_k \biggl(
\exp\biggl(
-\frac{\tH_{1;k,f}^{P_\tau\mu_0\rightarrow\mu_1}(x)}{N}
\biggr)
\biggr)
\,dP_\tau\mu_0(x) \\
&\kern2em
-
\frac{2n}{k}
\int_M
\left\langle
\nabla f,
\nabla\theta^{P_\tau\mu_0\rightarrow\mu_1}
\right\rangle
\,dP_\tau\mu_0,  \tag{1.17}
\end{align*}where \(\sigma_k\) denotes the \(k\)-th elementary symmetric polynomial.
\end{fthm}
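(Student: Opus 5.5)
We follow the Daneri--Savaré / Erbar--Kuwada--Sturm strategy, in the tensorial form used by Aishwarya--Rotem--Shenfeld. There are three ingredients: a first-variation formula for $\cW_2^2$ along the heat flow, a Grassmannian-averaged form of the concavity statement (6) of Theorem~\hyperlink{T:1.1}{1.1} with $K=0$, and the algebraic identity converting that Grassmannian average into $\sigma_k$.

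\emph{Step 1 (first variation).} Write $\mu_\tau=P_\tau\mu_0$, which for $\tau>0$ has a smooth positive density $\rho_\tau$ since $M$ is compact. Let $\theta=\theta^{\mu_\tau\to\mu_1}$ be the Kantorovich potential, so $-\theta$ is $\tfrac{d^2}{2}$-concave. The standard argument (Kantorovich duality, plus absolute continuity of $\tau\mapsto\cW_2^2(\mu_\tau,\mu_1)$) gives, for a.e. $\tau$,
\[
\frac{d}{d\tau}\cW_2^2(\mu_\tau,\mu_1)\le -2\int_M\Delta\theta\,d\mu_\tau = 2\int_M\langle\nabla\theta,\nabla\rho_\tau\rangle\,d\Vol_g .
\]
We then use the trace identity \eqref{eq:1.7}: $\tr\tH_t(x)=-\int_0^t\tr\tU_s\,ds$ and $\tr\tU_0=\Delta\theta(x)$ almost everywhere, interpreting $\Delta\theta$ via the Aleksandrov Hessian. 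This gives $-\Delta\theta=\tr\dot\tH_0$. We also have $\frac{d}{dt}\big|_{t=0}f(F_t(x))=\langle\nabla f,\nabla\theta\rangle$. Hence
\[
\tr\dot\tH_{0;k,f}=-\Delta\theta+\frac nk\langle\nabla f,\nabla\theta\rangle .
\]
Adding and subtracting $\frac{2n}{k}\int\langle\nabla f,\nabla\theta\rangle\,d\mu_\tau$ produces the last term of \eqref{eq:1.17}. It remains to bound $2\int\tr\dot\tH_{0;k,f}\,d\mu_\tau$, and we average over $\Sigma$: $\tr A=\frac nk\int_{\Gr}\tr(A|_\Sigma)\,d\mu_{\Gr}$.

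\emph{Step 2 (concavity to a tangent-line bound).} Fix $x$ and $\Sigma$, and set $\phi(t)=e^{-\frac1N\tr(\tH_{t;k,f}|_\Sigma)}$. By (6) with $K=0$, $\phi$ is concave on $[0,1]$, so $\phi(1)\le\phi(0)+\phi'(0)$. Since $\tH_{0;k,f}=0$, we get $\phi(0)=1$ and $\phi'(0)=-\frac1N\tr(\dot\tH_{0;k,f}|_\Sigma)$. Therefore
\[
\tr\bigl(\dot\tH_{0;k,f}|_\Sigma\bigr)\le N\Bigl(1-e^{-\frac1N\tr(\tH_{1;k,f}|_\Sigma)}\Bigr).
\]
Averaging over $\Gr(k,T_xM)$, multiplying by $\frac nk$, and integrating against $2\,d\mu_\tau$ yields $\frac{2nN}{k}$ minus $\frac{2nN}{k}\int\!\int_{\Gr}e^{-\frac1N\tr(\tH_1|_\Sigma)}\,d\mu_{\Gr}\,d\mu_\tau$.

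\emph{Step 3 (Grassmannian average as $\sigma_k$).} $\tH_1(x)$ is symmetric. This follows since $\tU_s$ is symmetric, being the Riccati solution from $\Hess\theta$, and in a parallel frame $\tH_1=-\log(\tJ_1\text{ suitably symmetrized})$; this needs checking, or we take it from Section~\ref{sec:2.4.2}. Write $A=\exp(-\tH_1/N)$. Then
\[
e^{-\frac1N\tr(\tH_1|_\Sigma)}\ \text{ vs. }\ \det(A|_\Sigma).
\]
The cleanest route is to bound $\det\bigl(e^{-B/N}|_\Sigma\bigr)$ for the compression. By concavity of $\log\det$ and Jensen (Peierls--Bogoliubov type), $\exp(\tr(P_\Sigma(-B/N)P_\Sigma))\le\det(P_\Sigma e^{-B/N}P_\Sigma)$, which goes in the needed direction. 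The Haar average of principal-minor determinants is
\[
\int_{\Gr}\det(A|_\Sigma)\,d\mu_{\Gr}=\sigma_k(A)/\binom nk .
\]
Using $\frac nk\binom nk^{-1}=\binom{n-1}{k-1}^{-1}$ gives exactly the coefficient $\frac{2N}{\binom{n-1}{k-1}}$.

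\emph{Main obstacle.} The obstacle is Step 3. We must justify the inequality $e^{\tr(P_\Sigma XP_\Sigma)}\le\det(P_\Sigma e^XP_\Sigma)$ on $\Sigma$. It holds because $\log$ is operator concave, so $P\log(Y)P\le\log(PYP)$ on $\Sigma$ by the Jensen operator inequality (Hansen--Pedersen). We must also verify the symmetry of $\tH_1$ so that functional calculus applies; the $f$-term is a scalar multiple of $\Id$ and causes no issue. A secondary technical point is applying Theorem~\hyperlink{T:1.1}{1.1} to $\mu_\tau\in\cPac$ and handling the a.e.-$\tau$ differentiability, which we take from the Daneri--Savaré framework.
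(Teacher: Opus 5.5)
Your Steps 1 and 2 are sound and match the paper. The first-variation bound is what Lemma 4.1 extracts from the EVI for $H$. Strictly, the chain should read $\frac{d}{d\tau}\cW_2^2(\mu_\tau,\mu_1)=2\int_M\langle\nabla\theta,\nabla\rho_\tau\rangle\,d\Vol_g\le-2\int_M\tr\tU_0\,d\mu_\tau$; the inequality comes from the nonnegative singular part of the distributional Laplacian of the semiconvex $\theta$, and it goes in the direction you need. Your tangent-line bound is exactly (4.8).

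The gap is Step 3. Because the Grassmannian average enters with a minus sign, you need the \emph{lower} bound
\[
\int_{\Gr(k,T_xM)}e^{-\frac1N\tr(\tH_{1;k,f}(x)|_\Sigma)}\,d\mu_{\Gr}(\Sigma)\;\ge\;\binom{n}{k}^{-1}\sigma_k\Bigl(e^{-\tH_{1;k,f}(x)/N}\Bigr).
\]
The inequality $e^{\tr(X|_\Sigma)}\le\det(e^X|_\Sigma)$ that you derive from operator concavity of $\log$ is true. But combined with $\int_{\Gr}\det(A|_\Sigma)\,d\mu_{\Gr}=\sigma_k(A)/\binom nk$ it gives an \emph{upper} bound on the Grassmannian average, so it does not go in the needed direction. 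Worse, the needed inequality is false. For $k=1$ it reads $\int_{S^{n-1}}e^{\langle Xu,u\rangle}\,du\ge\frac1n\tr e^X$, with $du$ the normalized measure. Jensen's inequality for the spectral measure of $X$ in the direction $u$ gives $e^{\langle Xu,u\rangle}\le\langle e^Xu,u\rangle$, hence the reverse inequality, strict unless $X$ is a multiple of $\Id$. For instance, with $n=2$ and $X=\mathrm{diag}(a,-a)$, the ratio of the left side to the right side tends to $0$ as $a\to\infty$. So Haar averaging loses information, and your chain can only produce a bound strictly weaker than (1.17).

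The missing idea is to use one well-chosen frame instead of an average. For any orthonormal basis $\{e_i\}$ of $T_xM$, set $\Sigma_I\coloneqq\Span\{e_i:i\in I\}$. Since each index lies in $\binom{n-1}{k-1}$ of the sets $I$ (cf. (2.46)), you have $\tr\dot\tH_{0;k,f}=\binom{n-1}{k-1}^{-1}\sum_{|I|=k}\tr(\dot\tH_{0;k,f}|_{\Sigma_I})$. Apply your Step 2 to each $\Sigma_I$, and take $\{e_i\}$ to be an eigenbasis of $\tH_{1;k,f}(x)$ with eigenvalues $\lambda_i$. Then
\[
\sum_{|I|=k}e^{-\frac1N\tr(\tH_{1;k,f}(x)|_{\Sigma_I})}=\sum_{|I|=k}e^{-\frac1N\sum_{i\in I}\lambda_i}=\sigma_k\Bigl(e^{-\tH_{1;k,f}(x)/N}\Bigr)
\]
exactly, and the constant $\frac{2N}{\binom{n-1}{k-1}}$ appears directly; this is the paper's proof. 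By the first inequality in (2.45), the eigenbasis maximizes this sum over all orthonormal bases. The Haar average over $\Gr(k,T_xM)$ is an average of such sums over random bases, which is exactly why it falls short.

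Finally, the symmetry of $\tH_{1;k,f}(x)$ needs no logarithmic representation, and none holds in general since the $\tU_s$ need not commute. In a parallel orthonormal frame, $\tH_{1;k,f}(x)$ is $-\int_0^1\tU_s\,ds$ plus a multiple of $\Id$, and each $\tU_s$ is symmetric.
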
}

As is well known, evolution variational inequalities imply
contraction estimates for the Wasserstein distance along the heat flow.
In particular, by the works of
von Renesse--Sturm \cite{von_renesse_transport_2005} and
Bolley--Gentil--Guillin \cite{bolley2015equivalence}, when the Ricci
curvature is nonnegative, we have
\begin{align*}
         \cW^2_2(P_T \mu_0, P_T \mu_1) -   \cW_2^2(\mu_0, \mu_1) \leq   - 8n \int_0^T   \sinh^2 \biggl( \frac{H(P_\tau \mu_1) - H(P_\tau \mu_0) }{2n}    \biggr)    \, d \tau.
    \end{align*}

Building on the intrinsic-dimensional refinement of this estimate due
to Aishwarya--Rotem--Shenfeld
\cite{aishwarya2025sectionalcurvature},
we establish the following Wasserstein contraction estimate under
nonnegative weighted intermediate Ricci curvature.

\hypertarget{C:1.1}{
\begin{fcor}
    Let $(M^n,g,e^{-f}d\Vol_g)$ be a smooth compact weighted Riemannian manifold without boundary satisfying $\Ric^N_{k, f} \geq 0$. Let $\mu_0,\mu_1 \in \cPac(M)$, and let $(P_\tau)_{\tau\geq0}$ be the heat semigroup on $M$. Then, for every $T > 0$, we have
    \begin{align*}
        &\kern-2em \cW^2_2(P_T \mu_0, P_T \mu_1) - \Bigl[ \cW_2(\mu_0, \mu_1) +\frac{2nT}{k} \| \nabla f \|_{L^\infty} \Bigr]^2 \\
        & \leq   - \frac{8N}{\binom{n-1}{k-1}} \int_0^T \int_M  \tr_{\bigwedge^k T_x M} \biggl[  \sinh^2 \biggl( \frac{(\tH_{1; k, f}^{P_\tau \mu_0 \rightarrow P_\tau \mu_1}(x))^{[k]}}{2N}    \biggr) \biggr] \, d P_\tau \mu_0(x) \, d \tau.
    \end{align*}Here, for any self-adjoint operator
\(A:T_xM\to T_xM\),
we write $A^{[k]}:\bigwedge^kT_xM\to\bigwedge^kT_xM$
for the induced endomorphism on the \(k\)-th exterior power (see Section \ref{sec:2.3}).
\end{fcor}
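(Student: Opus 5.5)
We derive the corollary from Theorem~\hyperlink{T:1.3}{1.3}, following the standard passage from an EVI to a contraction estimate (von Renesse--Sturm, Bolley--Gentil--Guillin), adapted as in Aishwarya--Rotem--Shenfeld.

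\emph{Step 1: time derivative of the distance along two heat flows.} For a.e.\ $\tau$,
\[
\frac{d}{d\tau}\cW_2^2(P_\tau\mu_0,P_\tau\mu_1)
\]
equals the sum of the two one-sided derivatives, in which one marginal is frozen and the other evolves. Concretely, we write the right-hand derivative at $\tau$ as
\[
\partial_s\big|_{s=\tau}\cW_2^2(P_s\mu_0,P_\tau\mu_1)+\partial_s\big|_{s=\tau}\cW_2^2(P_\tau\mu_0,P_s\mu_1).
\]
This is justified by the doubling-variables argument, since $\cW_2^2$ is locally Lipschitz along the heat flow on a compact manifold.

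We apply Theorem~\hyperlink{T:1.3}{1.3} to each term: once with the pair $(P_\tau\mu_0,P_\tau\mu_1)$, and once with the roles swapped. The reverse transport is $F_1^{-1}$ with potential $\theta'$, where $\nabla\theta'(F_1(x))=-\dot\gamma(1)$.

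\emph{Step 2: combining the entropy-tensor terms.} We need a symmetry identity: after pushing forward under $F_1$, the reverse tensor satisfies
\[
\tH_{1;k,f}^{\mu_1\to\mu_0}(F_1(x))\simeq-\tH_{1;k,f}^{\mu_0\to\mu_1}(x),
\]
up to conjugation by parallel transport along $\gamma$. This should follow from reversing the Jacobi fields along $\gamma$: $\tJ_s$ reversed gives $\tJ_{1-s}\tJ_1^{-1}$, so $\int_0^1\tU$ reverses to the log-determinant-type quantity with opposite sign. The $f$-term in (1.5) flips sign trivially.

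Writing $A=\tH_{1;k,f}/N$ and using $\sigma_k(e^{-A})=\tr_{\bigwedge^k}\exp(-A^{[k]})$ (from Section~\ref{sec:2.3}), the two $\sigma_k$ terms sum to
\[
\tr_{\bigwedge^k}\bigl(e^{-A^{[k]}}+e^{A^{[k]}}\bigr).
\]
We then use
\[
\frac{2nN}{k}\cdot 2=\frac{2N}{\binom{n-1}{k-1}}\tr_{\bigwedge^k}(2\Id),
\]
which holds because $\binom nk=\frac nk\binom{n-1}{k-1}$. Together with $\cosh x-1=2\sinh^2(x/2)$, this gives exactly
\[
-\frac{8N}{\binom{n-1}{k-1}}\int\tr\sinh^2\bigl(A^{[k]}/2\bigr)\,dP_\tau\mu_0 .
\]

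\emph{Step 3: the drift terms and Gronwall.} The two $f$-terms combine to
\[
-\frac{2n}{k}\int\bigl\langle\nabla f(x),\nabla\theta(x)\bigr\rangle-\bigl\langle\nabla f(F_1x),\dot\gamma(1)\bigr\rangle\,dP_\tau\mu_0.
\]
This is bounded by $\frac{4n}{k}\|\nabla f\|_{L^\infty}\,\cW_2(P_\tau\mu_0,P_\tau\mu_1)$ via Cauchy--Schwarz, since $|\nabla\theta|=d(x,F_1x)$. Setting $w(\tau)=\cW_2(P_\tau\mu_0,P_\tau\mu_1)$ and $S(\tau)\ge0$ the sinh term, we obtain
\[
(w^2)'\le 4cw-S,\qquad c=\frac nk\|\nabla f\|_{L^\infty}.
\]
Since $S\ge0$, we have $w'\le 2c$, and so $w\le w(0)+2c\tau$. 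Hence
\[
w(T)^2\le w(0)^2+\int_0^T4c\,(w(0)+2c\tau)\,d\tau-\int_0^T S=\bigl(w(0)+2cT\bigr)^2-\int_0^T S,
\]
which is the claimed bound.

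\emph{Main obstacle.} The main obstacle is Step 2: rigorously identifying the reverse entropy tensor with minus the forward one, conjugated through $F_1$. This requires a.e.\ differentiability of both potentials on the supports, and a check that $\sigma_k$ and $\tr_{\bigwedge^k}$ are invariant under this conjugation. I would first verify the identity at the level of $\tJ$ via the Jacobi equation on $\gamma$ reversed, and only then integrate.
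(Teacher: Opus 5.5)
Your proposal is correct and follows essentially the same route as the paper. Both arguments do the same things:
\begin{itemize}
\item apply the EVI of Theorem 4.1 in both transport directions and combine them by doubling variables;
\item use the time-reversal identity $\tH^{\nu_1\to\nu_0}_{1;k,f}(F_1(x))=-\tH^{\nu_0\to\nu_1}_{1;k,f}(x)$, which is the paper's Lemma 4.2 and rests on Claim 3.7 of Aishwarya--Rotem--Shenfeld, exactly the Jacobi-reversal fact $\tJ^{\mathrm{rev}}_s=\tJ_{1-s}\tJ_1^{-1}$ you flag;
\item rewrite the two $\sigma_k$ terms via $\sigma_k(e^{-A})+\sigma_k(e^{A})-2\binom nk=4\tr_{\bigwedge^k}\sinh^2(A^{[k]}/2)$;
\item bound the drift terms by Cauchy--Schwarz.
\end{itemize}
Your final two-stage integration ($w'\le 2c$, then integrating $(w^2)'\le 4cw-S$) is the paper's Lemma 2.2 unpacked, with $a=\frac{4n}{k}\|\nabla f\|_{L^\infty}$ and $b=S$. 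Your justification of the diagonal chain rule in Step 1 is informal: Lipschitz continuity alone does not control the null diagonal, and applying an a.e.-in-$\tau$ EVI at the single point $s=\tau$ needs the every-$\tau$ Dini form. The paper is equally informal at this step.
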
}

{\raggedright
\textbf{Organization of paper.}
In Section~\ref{sec:2}, we introduce the basic notions and establish the necessary preliminaries from Riemannian geometry, ordinary differential equations, linear algebra, and optimal transport. In Section~\ref{sec:3}, we prove our main result Theorem~\hyperlink{T:1.1}{1.1}, which characterizes lower bounds for the weighted intermediate Ricci curvature in terms of the weighted
\(k\)-Boltzmann entropy tensor and establishes tensorial Bochner
inequalities.
Finally, in Section~\ref{sec:4}, we establish the
intrinsic-dimensional evolution variational inequality (Theorem~\hyperlink{T:1.3}{1.3}) and the corresponding Wasserstein
contraction estimate for the heat flow
(Corollary~\hyperlink{C:1.1}{1.1}), together with a partial rigidity result
(Proposition~\hyperlink{P:4.1}{4.1}).
}

\medskip

{\raggedright
\textbf{Acknowledgement.}
The authors would like to thank Ming Hsiao, Pak-Yeung Chan, and Shouhei Honda for helpful comments and valuable suggestions. 
The first author is supported by the National Science and Technology Council, Taiwan (Grant No. NSTC 114-2115-M-002-012-MY3).
The research leading to these results is part of a project that has received funding from the European Research Council (ERC) under the European Union's Horizon 2020 research and innovation programme (grant agreement No 101001677).
}

\section{Preliminaries}
\label{sec:2}
In this section, we introduce the basic notions and establish the preliminary results from Riemannian geometry, ODEs, linear algebra, and optimal transport that will be used throughout the paper. These topics are presented in Sections~\ref{sec:2.1}, \ref{sec:2.2}, \ref{sec:2.3}, and \ref{sec:2.4}, respectively.

\subsection{Preliminaries on Riemannian geometry}
\label{sec:2.1}
In this section, we recall the basic notions and formulas from Riemannian geometry that will be needed throughout the paper. In Section~\ref{sec:2.1.1}, we review the notions of
Riemannian curvature, Ricci curvature, sectional curvature, and intermediate Ricci curvature on Riemannian manifolds. We then extend the notion of intermediate Ricci curvature to weighted Riemannian manifolds and introduce the corresponding curvature condition, which we call the $N$-Bakry--Émery intermediate $k$-Ricci curvature. In Section~\ref{sec:2.1.2}, we review Jacobi fields and the Jacobi equation, and derive a Riccati-type equation in the weighted setting. Finally, in Section~\ref{sec:2.1.3}, we present a tensorial version of the Bochner formula and introduce the corresponding Bakry--Émery operator for weighted Riemannian manifolds.

\subsubsection{Riemannian curvature and intermediate Ricci curvature}
\label{sec:2.1.1} 
Let $(M^n,g)$ be an $n$-dimensional smooth complete Riemannian manifold
without boundary.
For $x\in M$, choose local coordinates
$\{x^1,\dots,x^n\}$ in a neighborhood of $x$ and define the metric
coefficients by
\[
g_{ij}
\coloneqq
g\Bigl(
\frac{\partial}{\partial x^i},
\frac{\partial}{\partial x^j}
\Bigr).
\]
We denote by $(g^{ij})$ the inverse matrix of
$(g_{ij})$ and, for convenience, we write
\[
\partial_i
:=
\frac{\partial}{\partial x^i}.
\]
Let $\nabla$ denote the Levi-Civita connection of $g$, namely the unique torsion-free affine connection on the tangent bundle $TM$ that is compatible with the Riemannian metric.
Its Christoffel symbols are defined by
\[
\nabla_{\partial_i}\partial_j
=
\Gamma_{ij}^k \partial_k,
\]where
$\Gamma_{ij}^k
=
\frac12
g^{k l}
\left(
\partial_i g_{j l}
+
\partial_j g_{i l}
-
\partial_l g_{ij}
\right)$.
For smooth vector fields $X,Y\in\Gamma(TM)$, the Riemann curvature tensor is defined by
\begin{align*}
\label{eq:2.1}
\Riem(X,Y)
\coloneqq
\nabla_X\nabla_Y
-
\nabla_Y\nabla_X
-
\nabla_{[X,Y]}. \tag{2.1}
\end{align*}
In local coordinates, we write
\begin{align*}
\label{eq:2.2}
\Riem(\partial_i,\partial_j)\partial_k
=
\tensor{R}{_{ijk}^l} \partial_l,  \tag{2.2}
\end{align*}
where
$\tensor{R}{_{ijk}^l}
=
\partial_i\Gamma_{jk}^{l}
-
\partial_j\Gamma_{ik}^{l}
+
\Gamma_{im}^{l}\Gamma_{jk}^{m}
-
\Gamma_{jm}^{l}\Gamma_{ik}^{m}$.
The corresponding $(0,4)$-curvature tensor is defined by
\begin{align*}
\label{eq:2.3}
R_{ijkl}
\coloneqq
g\bigl(
\Riem(\partial_i,\partial_j)\partial_k,
\partial_l
\bigr). \tag{2.3}
\end{align*}
Equivalently,
$R_{ijkl}
=
g_{ml}   \tensor{R}{_{ijk}^m}$.

The Ricci curvature tensor is obtained by taking the trace of the curvature tensor. In local coordinates, we have
\begin{align*}
\label{eq:2.4}
\Ric_{ij}
\coloneqq
R_{kij}{}^{k}
=
g^{lk}R_{kijl}. \tag{2.4}
\end{align*}
Equivalently, for vector fields $X,Y\in\Gamma(TM)$,
\begin{align*}
\label{eq:2.5}
\Ric(X,Y)
=
\sum_{i=1}^{n}
g\bigl(
\Riem(e_i,X)Y,
e_i
\bigr), \tag{2.5}
\end{align*}
where $\{e_1,\dots,e_n\}$ is any orthonormal frame. Note that the expression \eqref{eq:2.5} does not depend on the choice of orthonormal frame.



We next recall the notion of sectional curvature. Given a point $x\in M$ and a two-dimensional subspace $P\subset T_xM$ spanned by vectors $u,v\in T_xM$, the sectional curvature of $P$ is defined by
\begin{align*}
\label{eq:2.6}
\Sc_x(P)
=
\frac{
g_x\bigl(\Riem_x(u,v)v,u\bigr)
}{
|u|^2|v|^2-g_x(u,v)^2
}. \tag{2.6}
\end{align*}
When $P=\Span\{u,v\}$, we also write
$\Sc_x(u,v)
\coloneqq
\Sc_x(P)$.
In particular, if $u$ and $v$ are orthonormal, then we get
\[
\Sc_x(P)
=
g_x\bigl(\Riem_x(u,v)v,u\bigr).
\]


The Ricci curvature can be expressed as a sum of sectional curvatures. Let $x\in M$ and let $e\in T_xM$ be a unit vector. By choosing vectors
$e_1,\dots,e_{n-1}\in T_xM$ such that
$\{e,e_1,\dots,e_{n-1}\}$
forms an orthonormal basis of $T_xM$, if we set
\[
P_i
=
\Span\{e,e_i\},
\qquad
i=1,\dots,n-1,
\]
then we have
\[
\Ric_x(e,e)
=
\sum_{i=1}^{n-1}\Sc_x(P_i).
\]
Thus, the Ricci curvature in the direction $e$ is obtained by summing the sectional curvatures of all two-planes $P_i$ for $i$ from $1$ to $n-1$. Note that similarly the expression does not depend on the choice of orthonormal frame.  

Motivated by Ricci curvature and sectional curvature, we introduce the notion of intermediate \(k\)-Ricci curvature.

\begin{fdefi}[Intermediate $k$-Ricci curvature]\label{def:interm_ric}
Let $(M^n,g)$ be a Riemannian manifold, let $x\in M$, and let
$k\in\{1,\dots,n\}$.
For a $k$-dimensional subspace $\Sigma\subset T_xM$ and a vector
$v\in T_xM$, the \emph{intermediate $k$-Ricci curvature} of $\Sigma$
in the direction $v$ is defined by
\[
\Ric_k(\Sigma,v)
\coloneqq
\tr \Bigl[
\pi_\Sigma\circ
\bigl(\Riem(\bullet,v)v\bigr)\big|_\Sigma
\Bigr].
\]
Equivalently, if $\{e_1,\dots,e_k\}$ is an orthonormal basis of
$\Sigma$, then
\begin{align*}
\label{eq:2.7}
\Ric_k(\Sigma,v)
=
\sum_{i=1}^k
\Sc_x(v,e_i)
\bigl(
|v|^2-\langle v,e_i\rangle^2
\bigr), \tag{2.7}
\end{align*}
where $\pi_\Sigma:T_xM\to\Sigma$ denotes the orthogonal projection.
\end{fdefi}

\begin{frmk}
Indeed, the quantity $\Ric_k(\Sigma,v)$ does not depend on the choice of orthonormal basis
$\{e_1,\dots,e_k\}$ of $\Sigma$.
\end{frmk}

We next record several special cases illustrating the relationship
between the intermediate $k$-Ricci curvature and the classical
sectional and Ricci curvatures.

\begin{frmk}[Special cases]
For $k =1$ and $k = 2$, the intermediate $k$-Ricci curvature is closely related to sectional curvature. For $k = n-1$ and $k = n$, the intermediate $k$-Ricci curvature recovers the usual Ricci curvature in certain directions. 
\end{frmk}


\bigskip

We now turn to the more general framework of weighted Riemannian manifolds.
Given a smooth function $f\in C^2(M)$, we equip $(M^n,g)$ with the weighted measure
\[
e^{-f}\,d\Vol_g,
\]
where $d\Vol_g$ denotes the Riemannian volume measure induced by $g$.
The resulting triple
\[
(M^n,g,e^{-f}d\Vol_g)
\]
is called a \emph{weighted Riemannian manifold}.
In this setting, the notion of intermediate Ricci curvature admits a natural extension, which we call the \emph{Bakry--Émery intermediate Ricci curvature}.



\begin{fdefi}[Bakry--Émery intermediate $k$-Ricci curvature]
Let \[(M^n,g,e^{-f}d\Vol_g)\] be a weighted Riemannian manifold,
let $x\in M$, let $k\in\{1,\dots,n\}$,
and let $N>k$.
For a $k$-dimensional subspace $\Sigma\subset T_xM$ and a vector
$v\in T_xM$, we define the \emph{Bakry--Émery intermediate $k$-Ricci curvature} by
\begin{align*}    
\label{eq:2.8}
\Ric_{k,f}(\Sigma,v)
\coloneqq
\Ric_k(\Sigma,v)
+
\Hess f(v,v). \tag{2.8}
\end{align*}
In addition, we define
the \emph{$N$-Bakry--Émery intermediate $k$-Ricci curvature} by
\begin{align*}
\label{eq:2.9}
\Ric_{k,f}^{N}(\Sigma,v)
\coloneqq
\Ric_k(\Sigma,v)
+
\Hess f(v,v)
-
\frac{1}{N-k}df(v)^2. \tag{2.9}
\end{align*}
\end{fdefi}




\begin{fdefi}[Intermediate $k$-Ricci upper and lower bounds]
We say that $(M,g, e^{-f} d \Vol_g)$ has Bakry--Émery intermediate $k$-Ricci curvature bounded below
(respectively bounded above) by $K$ if, for any $x\in M$, any vector
$v\in T_xM$, and any $k$-dimensional subspace
$\Sigma\subset T_xM$, we have
\[
\Ric_{k, f}(\Sigma,v)\ge K|v|^2
\qquad
\text{(respectively } \Ric_{k, f}(\Sigma,v)\le K|v|^2\text{)}.
\]
For convenience, we write
\[
\Ric_{k, f} \ge K
\qquad
\text{(respectively } \Ric_{k, f} \le K\text{)}.
\]Similarly, we say that $(M,g, e^{-f} d \Vol_g)$ has $N$-Bakry--Émery intermediate $k$-Ricci curvature bounded below
(respectively bounded above) by $K$ if, for any $x\in M$, any vector
$v\in T_xM$, and any $k$-dimensional subspace
$\Sigma\subset T_xM$, we have
\[
\Ric_{k, f}^N(\Sigma,v)\ge K|v|^2
\qquad
\text{(respectively } \Ric_{k, f}^N(\Sigma,v)\le K|v|^2\text{)}.
\]
For convenience, we write
\[
\Ric_{k, f}^N\ge K
\qquad
\text{(respectively } \Ric_{k, f}^N \le K\text{)}.
\]
\end{fdefi}




The curvature bounds introduced above satisfy a monotonicity
property with respect to the intermediate dimension \(k\), provided
that the synthetic dimension parameter \(N\) and the weight function \(f\) are
rescaled appropriately. We state this observation in the following
remark.

\begin{frmk}
Let $k_1,k_2\in\{1,\dots,n\}$ with $k_1\le k_2$, and assume
$N_1>k_1$. If
$\Ric^{N_1}_{k_1,f}\ge K_1$,
then by defining $N_2 \coloneqq \frac{k_2}{k_1}N_1$, we have
\[
\Ric^{N_2}_{k_2,\frac{k_2}{k_1}f}
\ge
\frac{k_2}{k_1}K_1,
\]

Indeed, let $\Sigma\subset T_xM$ be a $k_2$-dimensional subspace, and let
$\{e_1,\dots,e_{k_2}\}$ be an orthonormal basis of $\Sigma$. Then, we have
\begin{align*}
\Ric^{N_2}_{k_2,\frac{k_2}{k_1}f}(\Sigma,v)
&=
\Ric_{k_2}(\Sigma,v)
+\frac{k_2}{k_1}\Hess f(v,v)
-\frac{\left(\frac{k_2}{k_1}\right)^2 }{N_2-k_2} df(v)^2  \\
&=
\Ric_{k_2}(\Sigma,v)
+\frac{k_2}{k_1}\Hess f(v,v)
-\frac{k_2}{k_1}\frac{1}{N_1-k_1}df(v)^2.
\end{align*}
For each $I\subset\{1,\dots,k_2\}$ with $|I|=k_1$, set
$\Sigma_I:=\operatorname{span}\{e_i:i\in I\}$.
Since each index $i$ appears in exactly
$\binom{k_2-1}{k_1-1}$ such subsets $I$, we have
\begin{align*}
&\kern-2em \Ric^{N_2}_{k_2,\frac{k_2}{k_1}f}(\Sigma,v) \\
&=
\frac{1}{\binom{k_2-1}{k_1-1}}
\sum_{\substack{I\subset\{1,\dots,k_2\}\\ |I|=k_1}}
\Ric^{N_1}_{k_1,f}(\Sigma_I,v) \ge
\frac{1}{\binom{k_2-1}{k_1-1}}
\binom{k_2}{k_1}K_1|v|^2 =
\frac{k_2}{k_1}K_1|v|^2.
\end{align*}
\end{frmk}

\subsubsection{Jacobi fields}
\label{sec:2.1.2}
In this section, we recall the definition of Jacobi fields.
Let $\gamma \colon [0, 1] \rightarrow M$ be a $C^2$-curve, its velocity vector field is defined by
\[\dot{\gamma}(s) \coloneqq \frac{d}{ds}\gamma(s) \in T_{\gamma(s)} M.\]

A $C^2$-curve $\gamma$ is called a \emph{geodesic} if its velocity vector field is parallel along $\gamma$, that is,
\begin{align*}
\label{eq:2.10}
    \nabla_{\dot{\gamma}} \dot{\gamma}(s) = 0 \qquad \text{ for all } \  s \in [0, 1]. \tag{2.10}
\end{align*}
Since the geodesic equation is a second-order system of ordinary
differential equations with smooth coefficients, standard regularity
theory implies that every geodesic is in fact smooth.

Now, let $\gamma$ be a geodesic, and consider a smooth map 
\begin{align*}
    F \colon (- \epsilon, \epsilon) \times [0, 1] &\longrightarrow M, \\
    (r,s) &\longmapsto \gamma_r(s)
\end{align*}such that $\gamma_0 = \gamma$ and for each fixed $r$, the curve $s \mapsto \gamma_r(s)$ is a geodesic.

Since each $\gamma_r$ is a geodesic, by \eqref{eq:2.10}, we have
\begin{align*}
\label{eq:2.11}
\nabla_{\partial_sF}\partial_sF=0,     \tag{2.11}
\end{align*} 
where, for convenience, we write $\partial_s \coloneqq \frac{\partial}{\partial s}$ and $\partial_r \coloneqq \frac{\partial}{\partial r}$.
By taking the covariant derivative $\nabla_{\partial_r F}$ of \eqref{eq:2.11}, we obtain
\begin{align*}
\label{eq:2.12}
\nabla_{\partial_rF}
\nabla_{\partial_sF}
\partial_sF
=
0  \qquad \text{ for all } \ r, s.     \tag{2.12}
\end{align*}
Since the Levi-Civita connection is torsion-free and
$[\partial_r,\partial_s]=0$,
\eqref{eq:2.12} implies that
\begin{align*}
\label{eq:2.13}
\Riem(\partial_rF,\partial_sF)\partial_sF
+
\nabla_{\partial_sF}
\nabla_{\partial_sF}
\partial_rF
=
0.    \tag{2.13}
\end{align*}

The vector field along $\gamma$ defined by
\[
J(s)
\coloneqq
\partial_rF(r,s)
\Big|_{r=0}
\]
is called the \emph{Jacobi field} associated to the variation $F$.
By evaluating \eqref{eq:2.13} at $r=0$, we obtain the \emph{Jacobi equation}.
\begin{align*}
\label{eq:2.14}
\nabla_{\dot\gamma}
\nabla_{\dot\gamma}
J
+
\Riem(J,\dot\gamma)\dot\gamma
=
0.   \tag{2.14} 
\end{align*}

Conversely, the Jacobi equation \eqref{eq:2.14} is a linear second-order ordinary
differential equation along the geodesic $\gamma$. Therefore, for any
prescribed initial data
\[
J(0)
\qquad\text{and}\qquad
\left.
\nabla_{\dot\gamma}J
\right|_{s=0},
\]
there exists a unique Jacobi field along $\gamma$ satisfying these
conditions. Moreover, every Jacobi field arises from a variation of
$\gamma$ through geodesics.

Now, consider a family of Jacobi fields $\{ J^i(s) \}_{i \in \{1, \cdots, n \}}$
along a fixed geodesic $\gamma$.
Let $\{e^i(0) \}_{i=1}^n$ be an orthonormal basis of
$T_{\gamma(0)}M$ such that $e^1(0)$ is parallel to $\dot{\gamma}(0)$
and let $\{e^i(s)\}_{i=1}^n$ denote the parallel transport of
$\{e^i(0) \}_{i=1}^n$ using $\nabla$ along $\gamma$,
that is,
\[
\nabla_{\dot{\gamma}} e^i(s) = 0
\]
for all $s \in [0, 1]$ and $i \in \{1, \cdots, n\}$.

It is convenient to describe the evolution of the Jacobi fields using this parallel frame.
Let $\textbf{J}(s)$ be the $n\times n$ matrix whose $j$-th column consists of the coordinates of the Jacobi field $J^j(s)$ with respect to the basis $\{e^i(s) \}_{i = 1}^n$. 

Then the Jacobi equation \eqref{eq:2.14} can be written in matrix form as
\begin{align*}
\label{eq:2.15}
\ddot{\textbf{J}}(s)+ \textbf{R}(s) \textbf{J}(s)=0,   \tag{2.15}
\end{align*}
where $\dot{\tJ}(s)$ and $\ddot{\tJ}(s)$ denote the first and second derivatives
with respect to $s$, and $\tR(s)$ is the symmetric $n\times n$ matrix whose
entries
\[
\tR(s)= \begin{bmatrix}
    R_{ij}(s)
\end{bmatrix}_{i,j=1}^n
\]
are given by
\begin{align*} 
&\kern-2em R_{ij}(s) \\
&\coloneqq
g_{\gamma(s)}
\bigl(
\Riem
\bigl(
e^j(s), \dot{\gamma}(s)
\bigr)
\dot{\gamma}(s),
e^i(s)
\bigr)
=g_{\gamma(s)}
\bigl(
\Riem
\bigl(
e^i(s), \dot{\gamma}(s)
\bigr)
\dot{\gamma}(s),
e^j(s)
\bigr).
\end{align*}

When $\tJ(s)$ is invertible, we define
\begin{align*} 
\label{eq:2.16}
\tU(s) \coloneqq \dot \tJ(s) \tJ^{-1}(s) \tag{2.16}
\end{align*}and introduce its weighted version
\begin{align*}
\label{eq:2.17}
    \tU_{k, f}(s) \coloneqq \tU(s) - \frac{1}{k} \langle \nabla f, \dot \gamma(s) \rangle \Id. \tag{2.17}
\end{align*}

It follows from \eqref{eq:2.15} that $\tU(s)$ satisfies the matrix Riccati equation
\begin{align*}
\label{eq:2.18}
\dot \tU(s) + \tU(s)^2 + \tR(s) = 0 \tag{2.18}
\end{align*}while $\tU_{k, f}(s)$ satisfies the following weighted matrix Riccati equation 
\begin{align*}
\label{eq:2.19}
    \dot \tU_{k, f}(s) &= \dot \tU(s) - \frac{1}{k} \nabla^2f(\dot \gamma, \dot \gamma) \Id = -\tU(s)^2 - \tR(s) - \frac{1}{k} \nabla^2f(\dot \gamma, \dot \gamma) \Id \\
    &= - \tU_{k, f}(s)^2 -  \frac{2}{k} \langle \nabla f, \dot \gamma(s) \rangle \tU_{k, f}   - \tR(s)  \\
    &\kern2em - \frac{1}{k} \Bigl(   \nabla^2f(\dot \gamma, \dot \gamma) + \frac{1}{k} \langle \nabla f, \dot \gamma(s) \rangle^2 \Bigr) \Id. \tag{2.19}
\end{align*}




\subsubsection{The Bochner formula}
\label{sec:2.1.3}
The Bochner formula is one of the fundamental identities in Riemannian geometry.
It relates the Laplacian of the squared gradient of a smooth function to its
Hessian and the Ricci curvature, thereby revealing how curvature influences the
behavior of second-order differential operators.

For any smooth function $\varphi \in C^\infty(M ; \mathbb R)$ and $x \in M$,
we have 
\begin{align*}
\label{eq:2.20}
-\frac12 \Delta |\nabla \varphi|_g^2
+
g(\nabla \varphi,\nabla \Delta \varphi)
+
\tr \big[(\nabla^2\varphi)^2\big]
+
\Ric(\nabla \varphi,\nabla \varphi)
=
0,\tag{2.20}
\end{align*}
where all terms are evaluated at $x$.

This identity naturally motivates the definition of the Bakry--Émery operator
\begin{align*}
\label{eq:2.21}
\Gamma_2(\varphi)
:=
\frac12 \Delta |\nabla \varphi|^2
-
g(\nabla \varphi,\nabla \Delta \varphi),
\tag{2.21}
\end{align*}
which, by the Bochner formula \eqref{eq:2.20}, admits the decomposition
\begin{align*}\label{eq:2.22}
\Gamma_2(\varphi)
=
\tr \bigl[(\nabla^2\varphi)^2\bigr]
+
\Ric(\nabla\varphi,\nabla\varphi). \tag{2.22}    
\end{align*}
Since the Hessian term is nonnegative, lower bounds on the Ricci curvature
translate directly into lower bounds for $\Gamma_2$. This observation forms the
foundation of Bakry--Émery theory, providing an analytic framework for studying
lower Ricci curvature bounds through diffusion operators. It has led to numerous
applications in geometric analysis, including gradient estimates, heat kernel
bounds, functional inequalities, and, more recently, optimal transport and
synthetic notions of Ricci curvature.

\medskip

Our goal is to develop an analogous framework for intermediate Ricci curvature.
Since intermediate Ricci curvature is defined by taking traces of the Riemann
curvature tensor over $k$-dimensional subspaces, it is more natural to work with
a tensorial version of the Bochner formula before taking traces. This viewpoint
allows the full Riemann curvature tensor, rather than only its trace, to appear
explicitly.



\begin{fprop}
Let $(M, g)$ be a smooth Riemannian manifold. Then, for every smooth function $\varphi:M\to\mathbb R$ and every $x\in M$, we have
\begin{align*}
\label{eq:2.23}
-\frac12 \nabla^2 |\nabla\varphi|_g^2
+
\nabla_{\nabla\varphi}\nabla^2\varphi
+
(\nabla^2\varphi)^2
+
\Riem(\bullet,\nabla\varphi)\nabla\varphi
=
0,    \tag{2.23}
\end{align*}
where all terms are evaluated at $x$. More precisely, the curvature term
is the symmetric $(0,2)$-tensor defined by
\[
\bigl(\Riem(\bullet,\nabla\varphi)\nabla\varphi\bigr)(X,Y)
:=
g\left(
\Riem(X,\nabla\varphi)\nabla\varphi,
Y
\right).
\]
\end{fprop}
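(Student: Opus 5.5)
We verify the identity \eqref{eq:2.23} pointwise by evaluating each term on an arbitrary pair of tangent vectors $X,Y\in T_xM$. Since every term is tensorial, it suffices to check it in local coordinates, or more conveniently in abstract index notation. Write $\varphi_i=\nabla_i\varphi$, $\varphi_{ij}=\nabla_j\nabla_i\varphi$, and so on. The Hessian $\varphi_{ij}$ is symmetric because $\nabla$ is torsion-free.

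The plan proceeds in three steps.

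\textbf{Step 1: expand $\nabla^2|\nabla\varphi|^2$.} Differentiating $|\nabla\varphi|^2=\varphi^m\varphi_m$ twice and using metric compatibility gives
\[
\tfrac12\nabla_i\nabla_j|\nabla\varphi|^2=\varphi^m{}_{i}\varphi_{mj}+\varphi^m\varphi_{mji}.
\]
The first term is exactly $(\nabla^2\varphi)^2(X,Y)$, i.e.\ $g(\nabla^2\varphi\, X,\nabla^2\varphi\, Y)$ with $\nabla^2\varphi$ viewed as an endomorphism.

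\textbf{Step 2: commute derivatives in the third-order term.} Using the symmetry $\varphi_{mj}=\varphi_{jm}$, we rewrite $\varphi_{mji}=\varphi_{jmi}$. The Ricci identity then commutes the last two derivatives:
\[
\varphi_{jmi}-\varphi_{jim}=\pm R_{mij}{}^{l}\varphi_l .
\]
Since $\varphi_{jim}=\varphi_{ijm}$, contracting with $\varphi^m$ gives
\[
\varphi^m\varphi_{ijm}=(\nabla_{\nabla\varphi}\nabla^2\varphi)(X,Y).
\]
The remaining commutator term is $\varphi^m\varphi^l$ contracted with the curvature tensor, which should be $\mp g(\Riem(X,\nabla\varphi)\nabla\varphi,Y)$.

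\textbf{Step 3: assemble.} Combining Steps 1 and 2 yields
\[
\tfrac12\nabla^2|\nabla\varphi|^2=(\nabla^2\varphi)^2+\nabla_{\nabla\varphi}\nabla^2\varphi+\Riem(\bullet,\nabla\varphi)\nabla\varphi ,
\]
which is \eqref{eq:2.23} after rearranging. Tracing over an orthonormal basis recovers the classical Bochner formula \eqref{eq:2.20} via \eqref{eq:2.5}. This serves as a consistency check on the sign.

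\textbf{Main obstacle: sign bookkeeping in the commutator.} The Ricci identity must be used with the convention \eqref{eq:2.1}, \eqref{eq:2.2}. I would do this invariantly rather than in coordinates. Choose normal coordinates at $x$, or extend $X,Y$ so that $\nabla X=\nabla Y=0$ at $x$, and extend $\nabla\varphi$ as it is. Then compute
\[
\nabla_X\nabla_Y\nabla\varphi-\nabla_{\nabla\varphi}\nabla_Y\nabla_X\,\cdot
\]
directly from \eqref{eq:2.1}, and use $g(\nabla_Y\nabla\varphi,Z)=g(\nabla_Z\nabla\varphi,Y)$.

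Concretely, the third-order term is
\[
g(\nabla_X\nabla_{Y}\nabla\varphi,\nabla\varphi)=g(\nabla_X\nabla_{\nabla\varphi}\nabla\varphi,Y)\quad\text{(up to lower-order terms)}.
\]
Swapping via $\Riem(X,\nabla\varphi)\nabla\varphi=\nabla_X\nabla_{\nabla\varphi}\nabla\varphi-\nabla_{\nabla\varphi}\nabla_X\nabla\varphi-\nabla_{[X,\nabla\varphi]}\nabla\varphi$ produces $\nabla_{\nabla\varphi}\nabla^2\varphi$ plus the curvature term with a plus sign. The leftover bracket term $-\nabla_{[X,\nabla\varphi]}\nabla\varphi$ combines with the lower-order terms to yield the $(\nabla^2\varphi)^2$ contribution, and care is needed there. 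Finally, the trace check against \eqref{eq:2.20} confirms the sign.
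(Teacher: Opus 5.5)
Your proposal is correct and follows essentially the same route as the paper: expand $\tfrac12\nabla^2|\nabla\varphi|^2$ in indices, use the symmetry of the Hessian, and commute third covariant derivatives to produce $R_{mijl}\varphi^m\varphi^l$, which pair symmetry identifies with $\Riem(\bullet,\nabla\varphi)\nabla\varphi$. The only difference is how the sign is fixed: the paper computes it directly in normal coordinates via $\varphi_{jk;i}-\varphi_{ij;k}=R_{kijl}\varphi_l$, while you leave a single convention-dependent $\pm$ and determine it by tracing against \eqref{eq:2.20}. That is legitimate, since any example with $\Ric(\nabla\varphi,\nabla\varphi)\neq0$, such as the round sphere, fixes this universal sign, and the invariant computation you sketch indeed gives the plus sign.
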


\begin{proof}
For $x \in X$, we find a local normal coordinates $\{x^1, \cdots, x^n\}$ at $x$, that is,
\begin{align*}
    g_{ij}(x) \coloneqq g\bigl(\partial_i, \partial_j \bigr), \qquad \partial_k g_{ij} (x) = 0
\end{align*}for all $i ,j ,k \in \{1, \cdots, n\}$, where $\partial_i \coloneqq \frac{\partial}{\partial x^i}$.

\medskip

For convenience, we write
\[
\varphi_i=\nabla_i\varphi,
\qquad
\varphi_{ij}=\nabla_i\nabla_j\varphi,
\qquad
\varphi_{ij;k}=\nabla_k\nabla_i\nabla_j\varphi.
\]
First, we have
\[
|\nabla\varphi|_g^2
=
g^{kl}\varphi_k\varphi_l,
\]which implies that at the point $x$, we obtain 
\begin{align*}
\label{eq:2.24}
\frac12 \nabla_i\nabla_j |\nabla\varphi|^2
&=
\nabla_i\nabla_j
\Bigl(
\frac12 g^{kl}\varphi_k\varphi_l
\Bigr) =
\varphi_{ik}\varphi_{jk}
+
\varphi_k \varphi_{jk;i}. \tag{2.24}
\end{align*}
Second, we have
\begin{align*}
\label{eq:2.25}
    \bigl((\nabla^2\varphi)^2\bigr)_{ij} =
\varphi_{ik}\varphi_{kj}. \tag{2.25}
\end{align*}
Third, we get
\begin{align*}
\label{eq:2.26}
\bigl(\nabla_{\nabla\varphi}\nabla^2\varphi\bigr)_{ij}
=
\varphi_k \varphi_{ij;k}. \tag{2.26}
\end{align*}
By combining equations \eqref{eq:2.24}, \eqref{eq:2.25}, and \eqref{eq:2.26}, we obtain 
\begin{align*}
\label{eq:2.27}
    -\frac12 \nabla_i\nabla_j |\nabla\varphi|^2 + \bigl((\nabla^2\varphi)^2\bigr)_{ij} + \bigl(\nabla_{\nabla\varphi}\nabla^2\varphi\bigr)_{ij} = - \varphi_k \varphi_{jk;i} + \varphi_k \varphi_{ij;k}. \tag{2.27}
\end{align*}
Using local normal coordinates at $x$, we obtain
\begin{align*}
\label{eq:2.28}
&\kern-2em \varphi_{jk;i}-\varphi_{ij;k} \\
&= \bigl ( \partial_i \partial_j \partial_k \varphi - \partial_i \Gamma^l_{jk} \varphi_l \bigr) -
\bigl ( \partial_k \partial_i \partial_j \varphi - \partial_k \Gamma^l_{ij} \varphi_l \bigr)
=
\bigl( \partial_k \Gamma^l_{ij} - \partial_i \Gamma^l_{jk} \bigr) \varphi_l \\
&= \tensor{R}{_{kij}^l} \varphi_l = 
R_{kijl} \varphi_l. \tag{2.28}
\end{align*}
Therefore, by equation \eqref{eq:2.28}, we have
\begin{align*}
\label{eq:2.29}
-\varphi_k\varphi_{ki;j}
+
\varphi_k\varphi_{ij;k}
=
-
\varphi_k\varphi_l R_{kijl} = -\Riem(\bullet, \nabla\varphi)  \nabla\varphi
\bigr)_{ij}. \tag{2.29}
\end{align*}
Thus, by combining equations \eqref{eq:2.27} and \eqref{eq:2.29}, we get
\[
-\frac12 \nabla_i\nabla_j |\nabla\varphi|^2
+
\bigl(\nabla_{\nabla\varphi}\nabla^2\varphi\bigr)_{ij}
+
\bigl((\nabla^2\varphi)^2\bigr)_{ij}
+
\bigl(
\Riem(\bullet, \nabla\varphi)\nabla\varphi
\bigr)_{ij}
=0.
\]
Since this holds for every pair of indices $i,j \in \{1, \cdots, n\}$ and at any arbitrary point
$x\in M$, we obtain
\[
-\frac12 \nabla^2 |\nabla\varphi|^2
+
\nabla_{\nabla\varphi}\nabla^2\varphi
+
(\nabla^2\varphi)^2
+
\Riem(\bullet, \nabla\varphi)\nabla\varphi
=
0.
\]This finishes the proof. 
\end{proof}

Motivated by the classical Bakry--Émery $\Gamma_2$-operator, we introduce
its tensorial analogue
\begin{align*}
\label{eq:2.30}
\widetilde{\Gamma}_2(\varphi)
\coloneqq
\frac12 \nabla^2 |\nabla \varphi|^2
-
\nabla_{\nabla \varphi}\nabla^2\varphi, \tag{2.30}
\end{align*}
the tensorial weighted Bakry--Émery operator:
\begin{align*}
\label{eq:2.31}
    \widetilde{\Gamma}_{2; k, f} (\varphi) &\coloneqq \frac12 \nabla^2 |\nabla \varphi|^2
-
\nabla_{\nabla \varphi}\nabla^2\varphi 
+ \frac{1}{k}   \Hess f( \nabla \varphi, \nabla \varphi)   \Id, \tag{2.31}
\end{align*}and the tensorial weighted $N$-Bakry--Émery operator:
\begin{align*}
\label{eq:2.32}
\kern-2em \widetilde{\Gamma}_{2; k, f}^N (\varphi) \coloneqq \frac12 \nabla^2 |\nabla \varphi|^2
-
\nabla_{\nabla \varphi}\nabla^2\varphi + \frac{1}{k} \Bigl(  \Hess f( \nabla \varphi, \nabla \varphi)  - \frac{df(\nabla \varphi)^2}{N- k}  \Bigr) \Id. \tag{2.32}
\end{align*}

Unlike the classical Bakry--Émery operator, which is a scalar quantity obtained
by taking the trace of the Bochner identity, the above operators are
tensor-valued. Their definitions involve only first-order and second-order
differential operators and do not explicitly involve the curvature tensor.
The tensorial Bochner formula then shows that the Riemann curvature tensor
appears naturally as the obstruction to the commutation of these differential operators.
Consequently, taking traces of these tensorial operators over suitable
subspaces produces quantities governed by intermediate Ricci curvature, making
them the natural analytic objects for our study.

\begin{flemma}
    Let $(M, g)$ be a smooth Riemannian manifold. Then, for every smooth function $\varphi:M\to\mathbb R$ and every $x\in M$, we have
    \begin{align*}\label{eq:2.33}
\kern-2.5em\widetilde{\Gamma}_2(\varphi)
&=(\nabla^2\varphi)^2
+
\operatorname{Riem}(\bullet, \nabla \varphi)\nabla \varphi, \tag{2.33} \\
\label{eq:2.34}
\kern-2.5em\widetilde{\Gamma}_{2; k, f} (\varphi) &=(\nabla^2\varphi)^2
+
\operatorname{Riem}(\bullet, \nabla \varphi)\nabla \varphi 
+ \frac{1}{k}   \Hess f( \nabla \varphi, \nabla \varphi)   \Id, \tag{2.34}  \\
\label{eq:2.35}
\kern-2.5em\widetilde{\Gamma}_{2; k, f}^N (\varphi) 
&=(\nabla^2\varphi)^2
+
\operatorname{Riem}(\bullet, \nabla \varphi)\nabla \varphi 
+ \frac{1}{k} \Bigl(  \Hess f( \nabla \varphi, \nabla \varphi)  - \frac{df(\nabla \varphi)^2}{N- k}  \Bigr) \Id. \tag{2.35}
    \end{align*}
\end{flemma}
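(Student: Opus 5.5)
This lemma is a direct rearrangement of the tensorial Bochner formula \eqref{eq:2.23}, together with the definitions \eqref{eq:2.30}--\eqref{eq:2.32}; no further analysis is needed. The plan is to prove \eqref{eq:2.33} first and then obtain \eqref{eq:2.34} and \eqref{eq:2.35} by adding the same scalar multiple of the identity to both sides.

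For \eqref{eq:2.33}, I start from the Proposition giving \eqref{eq:2.23}:
\[
-\frac12 \nabla^2 |\nabla\varphi|^2+\nabla_{\nabla\varphi}\nabla^2\varphi+(\nabla^2\varphi)^2+\Riem(\bullet,\nabla\varphi)\nabla\varphi=0 .
\]
I move the first two terms to the right-hand side. By the definition \eqref{eq:2.30}, these two terms combine to exactly $\widetilde{\Gamma}_2(\varphi)$, so the rearrangement gives
\[
\widetilde{\Gamma}_2(\varphi)=(\nabla^2\varphi)^2+\Riem(\bullet,\nabla\varphi)\nabla\varphi .
\]
This is \eqref{eq:2.33}, understood as an identity of symmetric $(0,2)$-tensors (equivalently, of self-adjoint endomorphisms via $g$) at each point $x$.

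For \eqref{eq:2.34} and \eqref{eq:2.35}, I observe that the definitions \eqref{eq:2.31} and \eqref{eq:2.32} differ from \eqref{eq:2.30} only by adding
\[
\frac1k \Hess f(\nabla\varphi,\nabla\varphi)\,\Id
\qquad\text{and}\qquad
\frac1k\Bigl(\Hess f(\nabla\varphi,\nabla\varphi)-\frac{df(\nabla\varphi)^2}{N-k}\Bigr)\Id ,
\]
respectively. Adding the same term to both sides of \eqref{eq:2.33} yields each identity immediately.

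The only point needing care is one of conventions rather than a real obstacle. The Hessian $\nabla^2\varphi$, the tensor $\Riem(\bullet,\nabla\varphi)\nabla\varphi$ and $\Id$ must all be identified through the metric, so that $(\nabla^2\varphi)^2$ means the composition of endomorphisms $\varphi_{ik}\varphi_{kj}$ as in \eqref{eq:2.25}, and $\Id$ corresponds to $g$. With this identification, all the terms live in the same space of symmetric tensors, and the sums above are meaningful.
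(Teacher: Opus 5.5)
Your proposal is correct and matches the paper's proof: it rearranges the tensorial Bochner identity \eqref{eq:2.23} using the definition \eqref{eq:2.30} to obtain \eqref{eq:2.33}. It then adds the scalar multiples of $\Id$ appearing in \eqref{eq:2.31} and \eqref{eq:2.32} to both sides to get \eqref{eq:2.34} and \eqref{eq:2.35}.
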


\begin{proof}
    By Proposition~\hyperlink{P:2.1}{2.1}, for any smooth function $\varphi:M\to\mathbb R$ and for every $x\in M$, we have
    \begin{align*}
\frac12 \nabla^2 |\nabla\varphi|_g^2
-
\nabla_{\nabla\varphi}\nabla^2\varphi
=
(\nabla^2\varphi)^2
+
\Riem(\bullet,\nabla\varphi)\nabla\varphi.
\end{align*}The identities \eqref{eq:2.33}, \eqref{eq:2.34}, and \eqref{eq:2.35} then follow immediately from the definitions of
$\widetilde{\Gamma}_2$,
$\widetilde{\Gamma}_{2;k,f}$,
and
$\widetilde{\Gamma}_{2;k,f}^{N}$. This finishes the proof. 
\end{proof}

This lemma shows that the tensorial Bakry--Émery operators encode the full Riemann curvature tensor, whereas the classical $\Gamma_2$-operator only detects its trace through the Ricci curvature. This distinction is precisely what makes the tensorial formulation suitable for studying intermediate Ricci curvature.

\subsection{Preliminaries on ODE inequalities}
\label{sec:2.2}
The comparison estimates developed later in this paper ultimately reduce to
one-dimensional differential inequalities along geodesics. These inequalities arise naturally from the
evolution of Jacobi fields, Riccati-type equations, and entropy functionals
along Wasserstein geodesics. In this section, we collect several elementary
ODE inequalities that will be used repeatedly throughout the paper.

\begin{flemma}
Let $T > 0$, $h:[0,T]\to\mathbb R_{\geq 0}$ be a function such that $h^2$ is an absolute continuous function on $[0, T]$.
Assume that $a,b\in L^1([0,T])$ with $a\geq 0$ and $b \geq 0$, and
\[
    (h^2)'(t)\leq a(t)h(t) - b(t)
    \qquad \text{for a.e. }t\in[0,T].
\]
Then, for every $0\leq s\leq t\leq T$, we have
\begin{align*}
\label{eq:2.36}
    h^2(t)
    \leq
        \Biggl[ \sqrt{h^2(s)  + \int_0^s b(r) \, dr      } + \frac{1}{2} \int_s^t a(r)\,dr \Biggr]^2
    -
    \int_0^t b(r) \,dr.    \tag{2.36}
\end{align*}
In particular, by letting $s = 0$, we have 
\begin{align*}
\label{eq:2.37}
h^2(t)
    \leq
        \Bigl( h(0)+ \frac{1}{2} \int_0^t a(s)\,d s \Bigr)^2
    -
    \int_0^t b(r) \,dr.   \tag{2.37}
    \end{align*}
\end{flemma}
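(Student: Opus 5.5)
The plan is to reduce everything to a one-dimensional comparison for the auxiliary quantity
\[
G(t) \coloneqq h^2(t) + \int_0^t b(r)\,dr ,
\]
which is absolutely continuous on $[0,T]$ because $h^2$ is absolutely continuous and $b\in L^1$. The hypothesis gives $G'(t) = (h^2)'(t) + b(t) \le a(t)h(t)$ for a.e.\ $t$. Since $b\ge 0$, we have $G\ge h^2\ge 0$, hence $h(t)\le\sqrt{G(t)}$. Combined with $a\ge 0$, this yields the closed differential inequality
\[
G'(t)\le a(t)\sqrt{G(t)} \qquad \text{for a.e. } t\in[0,T].
\]
The target \eqref{eq:2.36} is exactly the statement $\sqrt{G(t)}\le \sqrt{G(s)}+\tfrac12\int_s^t a(r)\,dr$, after squaring and subtracting $\int_0^t b$. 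So it suffices to prove this square-root comparison.

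Formally, $(\sqrt G)' = G'/(2\sqrt G)\le a/2$, and integrating from $s$ to $t$ gives the claim. The main obstacle is making this rigorous: $\sqrt{G}$ need not be absolutely continuous where $G$ vanishes, and $G'/(2\sqrt G)$ is undefined there. I would avoid the issue by regularizing. For $\varepsilon>0$ set $G_\varepsilon\coloneqq\sqrt{G+\varepsilon}$. This is absolutely continuous, being a composition of the absolutely continuous $G$ with a function that is Lipschitz on $[0,\infty)$. By the chain rule for absolutely continuous functions,
\[
G_\varepsilon' = \frac{G'}{2\sqrt{G+\varepsilon}} \le \frac{a\sqrt G}{2\sqrt{G+\varepsilon}} \le \frac a2 \quad \text{a.e.},
\]
where both the first inequality and the last use $a\ge0$. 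The one delicate sign point is that where $G'<0$ the bound is trivial, so no case split is really needed.

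Integrating from $s$ to $t$ gives $\sqrt{G(t)+\varepsilon}\le\sqrt{G(s)+\varepsilon}+\tfrac12\int_s^t a$. Letting $\varepsilon\to0$ yields $\sqrt{G(t)}\le \sqrt{G(s)}+\tfrac12\int_s^t a$. Both sides are nonnegative, so squaring preserves the inequality, and substituting $G(t)=h^2(t)+\int_0^t b$ together with $\sqrt{G(s)}=\sqrt{h^2(s)+\int_0^s b}$ gives \eqref{eq:2.36}. Finally, taking $s=0$ makes $\int_0^0 b=0$ and $\sqrt{h^2(0)}=h(0)$ since $h\ge0$, which is \eqref{eq:2.37}.
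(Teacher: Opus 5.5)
Your proof is correct and follows essentially the same route as the paper. The paper sets $H_\epsilon = h^2 + \int_0^t b + \epsilon$ (your $G+\varepsilon$), derives $H_\epsilon' \le a H_\epsilon^{1/2}$, integrates the resulting bound on $(\sqrt{H_\epsilon})'$ from $s$ to $t$, and lets $\epsilon \to 0$; you additionally spell out the chain-rule justification, namely that $\sqrt{G+\varepsilon}$ is absolutely continuous, which the paper leaves implicit.
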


\begin{proof}
First, for any $\epsilon > 0$, let 
\begin{align*}
    B(t) \coloneqq \int_0^t b(r) \, dr \quad  \text{ and } \quad H_\epsilon(t) \coloneqq h^2(t) + B(t) + \epsilon.
\end{align*}Then, by the hypothesis, we obtain 
\begin{align*}
    H'_{\epsilon}(t) = (h^2)'(t) + b(t) \leq a(t) h(t) \leq a(t) H_\epsilon^{1/2}(t),
\end{align*}which implies that
\begin{align*}
    \sqrt{H_\epsilon(t)} \leq  \sqrt{H_\epsilon(s)} + \frac{1}{2} \int_s^t a(r) dr
\end{align*}for any $\epsilon > 0$ and for every $0 \leq s \leq t \leq T$. By letting $\epsilon \rightarrow 0$, we obtain 
\begin{align*}
    h^2(t) \leq        \Biggl[ \sqrt{h^2(s)  + \int_0^s b(r) \, dr      } + \frac{1}{2} \int_s^t a(r)\,dr \Biggr]^2
    -
    \int_0^t b(r) \,dr. 
\end{align*}This finishes the proof. 
\end{proof}

\hypertarget{L:2.3}{\begin{flemma}
Let \(h\in C^2([0,1])\) satisfy
$h'' \geq a (h')^2 + b$ 
on \([0,1]\), where \(a,b\in\mathbb R\) with $a \geq 0$.
\begin{enumerate}
\item[(i)] If \(a=0\), then for every \(t\in[0,1]\),
\begin{align*}
\label{eq:2.38}
h(t)
\leq
(1-t)h(0)
+
t h(1)
-
\frac{b}{2}\,t(1-t).
\tag{2.38}
\end{align*}
\item[(ii)] If $a > 0$ and $\pi^2 > ab$, we define
\begin{align*}
\label{eq:2.39}
        \sigma_{ab}^{(t)} = \begin{cases}
            \frac{\sin(t\sqrt{ab})}{\sin(\sqrt{ab})}, &\text{ if }  ab > 0, \\
            t, &\text{ if } ab = 0, \\
            \frac{\sinh(t\sqrt{-ab})}{\sinh(\sqrt{-ab})}, &\text{ if } ab < 0,     
        \end{cases} \tag{2.39}
\end{align*}
then for every \(t\in[0,1]\),
\begin{align*}
\label{eq:2.40}
h(t)
\leq
-\frac1a
\log\bigl(
\sigma^{(1-t)}_{ab}
e^{-a h(0)}
+
\sigma^{(t)}_{ab}
e^{-a h(1)}
\bigr).
\tag{2.40}    
\end{align*}
\end{enumerate}
\end{flemma}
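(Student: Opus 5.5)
Both parts are one-dimensional comparison arguments. In each case I would produce an auxiliary function that is convex (or satisfies a linear second-order inequality) and vanishes at the endpoints, and then apply the maximum principle.

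For part (i), where $a=0$, the hypothesis is $h''\ge b$. Define
\[
g(t)\coloneqq h(t)-\bigl[(1-t)h(0)+t h(1)\bigr]+\frac{b}{2}\,t(1-t).
\]
Then $g''=h''-b\ge 0$, so $g$ is convex on $[0,1]$. Since $g(0)=g(1)=0$, convexity gives $g\le 0$ on $[0,1]$, which is exactly \eqref{eq:2.38}.

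For part (ii), where $a>0$, set $u\coloneqq e^{-ah}>0$. Then
\[
u'=-ah'u,\qquad u''=\bigl(a^2(h')^2-ah''\bigr)u .
\]
The hypothesis $h''\ge a(h')^2+b$ gives $ah''\ge a^2(h')^2+ab$, and therefore
\[
u''\le -ab\,u .
\]
Let $\kappa\coloneqq ab$ and let
\[
v(t)\coloneqq \sigma^{(1-t)}_{\kappa}u(0)+\sigma^{(t)}_{\kappa}u(1).
\]
By \eqref{eq:2.39}, $v$ solves $v''=-\kappa v$ with $v(0)=u(0)$ and $v(1)=u(1)$; the condition $\kappa<\pi^2$ ensures $\sin\sqrt{\kappa}\neq0$ when $\kappa>0$. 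The target inequality \eqref{eq:2.40} is equivalent to $u\ge v$, since $-\frac1a\log$ is decreasing. Setting $w\coloneqq u-v$, we have
\[
w''+\kappa w\le 0,\qquad w(0)=w(1)=0,
\]
and it remains to show $w\ge 0$.

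This last step is the only real obstacle, and it is where $\kappa<\pi^2$ is needed. For $\kappa\le 0$ the argument is the ordinary maximum principle. If $w$ had a negative minimum at an interior point $t_0$, then $w''(t_0)\ge0$ while $\kappa w(t_0)\ge 0$. This yields $w''+\kappa w\ge0$ at $t_0$, which is not yet a strict contradiction. To make it strict, I would either perturb by $\epsilon\phi$ with a positive $\phi$ satisfying $\phi''+\kappa\phi<0$, or use the quotient trick below.

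For $0<\kappa<\pi^2$, choose $\delta>0$ small with $\kappa<(\pi-2\delta)^2$ and set
\[
\phi(t)=\sin\bigl(\sqrt{\kappa}\,t+\delta'\bigr)>0 \quad\text{on }[0,1],
\]
with $\delta'>0$ small enough that $\sqrt{\kappa}+\delta'<\pi$. Then $\phi''=-\kappa\phi$. Write $w=\phi z$. A direct computation gives
\[
\phi z''+2\phi' z'=w''+\kappa w\le 0,
\]
so $(\phi^2 z')'\le 0$. Hence $z$ satisfies a maximum principle with no zeroth-order term: $z$ is concave after the change of variables $ds=\phi^{-2}\,dt$. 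Because $z(0)=z(1)=0$, this gives $z\ge0$, and so $w\ge0$. The same quotient trick with $\phi=\cosh$-type or constant functions also covers $\kappa\le 0$ uniformly.

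Combining the cases, $u\ge v$ on $[0,1]$, and applying $-\frac1a\log$ yields \eqref{eq:2.40}.
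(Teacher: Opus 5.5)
Your proof is correct and follows the same route as the paper. In (i) you use convexity of an auxiliary function that vanishes at the endpoints; this is equivalent to the paper's convexity of $h-\frac{b}{2}t^2$. In (ii) you pass to $u=e^{-ah}$, derive $u''+ab\,u\le 0$, and compare with the solution of $v''+ab\,v=0$ having the same boundary values. The one place you go beyond the paper is the comparison step for $0<ab<\pi^2$, where the paper simply invokes ``the one-dimensional maximum principle'' even though the zeroth-order term has the wrong sign there. Your factorization $w=\phi z$ with the positive solution $\phi(t)=\sin(\sqrt{ab}\,t+\delta')$, which yields $(\phi^2 z')'\le 0$, is exactly the justification that step needs.
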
}

\begin{proof}
First, consider the case $a = 0$, then by defining 
\begin{align*}
    \tilde{h}(t) = h(t) - \frac{b}{2} t^2,
\end{align*}we observe that $\tilde{h}$ is convex. Thus, we obtain
\begin{align*}
    \tilde{h}(t) \leq (1-t) \tilde{h}(0) + t \tilde{h}(1) = (1-t) h(0) + t h(1) - \frac{bt}{2},
\end{align*}that is,
\begin{align*}
    h(t) \leq (1-t) h(0) + t h(1) - \frac{b}{2}t(1-t).
\end{align*}

Second, consider the case \(a > 0\), and define
$u(t):=e^{-a h(t)}$.
A direct computation gives
$u'
=
-a h' e^{-a h}$
and
$u''
=
-a e^{-a h}\bigl(h''-a(h')^2\bigr)$.
Using the assumption that 
\(
h''\geq a(h')^2+b
\),
we obtain
\[
u''+ab\,u\leq 0.
\]
Let \(w\) be the unique solution of
$w''+ ab  w=0$ with boundary values
\[
w(0)=u(0),
\qquad
w(1)=u(1).
\]
Then the solution is given by
\[
w(t)
=
\sigma_{ab}^{(1-t)} u(0)
+
\sigma_{ab}^{(t)} u(1).
\]

By setting $z \coloneqq u-w$, 
then we get
$z''+ab\,z\leq 0$
and $z(0)=z(1)=0$.
By the one-dimensional maximum principle, we get 
\[
z\geq 0
\qquad\text{on }[0,1].
\]
Hence, we have
\[
u(t)\geq w(t)
=
\sigma_{ab}^{(1-t)} u(0)
+
\sigma_{ab}^{(t)} u(1).
\]
By substituting \(u(t)=e^{-ah(t)}\) back, we obtain 
\[
e^{-a h(t)}
\geq
\sigma_{ab}^{(1-t)}
e^{-a h(0)}
+
\sigma_{ab}^{(t)}
e^{-a h(1)}.
\]
This finishes the proof.
\end{proof}

\hypertarget{P:2.3}{
\begin{fprop}
Let $a>0$, $b\in\mathbb{R}$, and assume $ab<\pi^2$. Suppose that
\[
h(t)
\leq
-\frac1a
\log\Bigl(
\sigma_{ab}^{(1-t)}
e^{-ah(0)}
+
\sigma_{ab}^{(t)}
e^{-ah(1)}
\Bigr)
\]
for every $t\in[0,1]$, where $\sigma^{(t)}_{ab}$ is defined in \eqref{eq:2.39}.
Then the following hold.
\begin{enumerate}
    \item[(1)] For every $t\in[0,1]$,
    \begin{align*}
    \label{eq:2.41}
        h(t)
    \leq
    (1-t)h(0)
    +
    th(1)
    +
    A_t, 
    \tag{2.41}
    \end{align*}
    where $A_t$ is define by the following 
    \begin{align*}
    \label{eq:2.42}
        A_t
    \coloneqq 
    \begin{cases}
    -\frac1a
    \log \biggl[
    \Bigl(
    \frac{\sigma_{ab}^{(1-t)}}{1-t}
    \Bigr)^{1-t}
    \Bigl(
    \frac{\sigma_{ab}^{(t)}}{t}
    \Bigr)^t
    \biggr], & t \in (0, 1), \\
    0, & t = 0, 1.
    \end{cases}
    \tag{2.42}
    \end{align*}
    \item[(2)] In particular, for every $t\in[0,1]$,
    \begin{align*}
    \label{eq:2.43}
     \kern-2em h(t)
    \le
    \begin{cases}
    (1-t)h(0)
    +th(1)
    -\dfrac{b}{2}t(1-t),
    & b>0,\\[3mm]
    (1-t)h(0)
    +th(1),
    & b=0,\\[3mm]
    (1-t)h(0)
    +th(1)
    +\dfrac3a t(1-t)
    \log \biggl(
    \dfrac{\sinh\sqrt{-ab}}{\sqrt{-ab}}
    \biggr),
    & b<0.
    \end{cases} \tag{2.43}
    \end{align*}
\end{enumerate}
\end{fprop}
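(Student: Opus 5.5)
The plan is to use the hypothesis directly and proceed purely by elementary convexity and concavity estimates, without going back to any ODE. Throughout, write $u_0 \coloneqq e^{-ah(0)}$ and $u_1 \coloneqq e^{-ah(1)}$. At $t\in\{0,1\}$ every claimed inequality is trivial, because $\sigma^{(0)}_{ab}=0$ and $\sigma^{(1)}_{ab}=1$. So fix $t\in(0,1)$.

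\textbf{Step 1: part (1).} Rewrite the argument of the logarithm as a convex combination:
\[
\sigma^{(1-t)}_{ab}u_0+\sigma^{(t)}_{ab}u_1=(1-t)\Bigl(\tfrac{\sigma^{(1-t)}_{ab}}{1-t}u_0\Bigr)+t\Bigl(\tfrac{\sigma^{(t)}_{ab}}{t}u_1\Bigr).
\]
The weighted AM--GM inequality (concavity of $\log$) bounds this from below by
\[
\Bigl(\tfrac{\sigma^{(1-t)}_{ab}}{1-t}\Bigr)^{1-t}\Bigl(\tfrac{\sigma^{(t)}_{ab}}{t}\Bigr)^{t}u_0^{1-t}u_1^{t}.
\]
Since $a>0$, the map $y\mapsto-\frac1a\log y$ is decreasing. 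Applying it gives $h(t)\le(1-t)h(0)+th(1)+A_t$, which is (2.41).

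\textbf{Step 2: reduction of part (2).} By Step 1 it suffices to bound
\[
E(t)\coloneqq(1-t)\log\tfrac{\sigma^{(1-t)}_{ab}}{1-t}+t\log\tfrac{\sigma^{(t)}_{ab}}{t}=-aA_t
\]
from below. Set $c\coloneqq\sqrt{|ab|}$ and introduce $g(s)=G(cs)$, where
\[
G(y)=\log\tfrac{\sin y}{y}\ \text{ if } ab>0,\qquad G(y)=\log\tfrac{\sinh y}{y}\ \text{ if } ab<0.
\]
Then $\log(\sigma^{(s)}_{ab}/s)=g(s)-g(1)$ and $g(0)=0$, so
\[
E(t)=(1-t)\bigl(g(1-t)-g(1)\bigr)+t\bigl(g(t)-g(1)\bigr).
\]
The case $b=0$ is immediate: then $\sigma^{(s)}=s$ and $A_t=0$.

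\textbf{Step 3: the case $b>0$.} Here $0<c<\pi$. I will use the classical inequality $\frac1{\sin^2y}-\frac1{y^2}\ge\frac13$ on $(0,\pi)$, which gives $g''\le-c^2/3$.
\begin{itemize}
\item Since $g(0)=0$, strong concavity yields $g(s)\ge s\,g(1)+\tfrac{c^2}{6}s(1-s)$.
\item Integrating twice from $0$, using $G(0)=G'(0)=0$, yields $-g(1)\ge c^2/6$.
\item Plugging the first bound into $E$ gives $E(t)\ge t(1-t)\bigl(-2g(1)+\tfrac{c^2}{6}\bigr)$.
\item The second bound then gives $E(t)\ge\tfrac{c^2}{2}t(1-t)=\tfrac{ab}{2}t(1-t)$.
\end{itemize}
Dividing by $a$ gives $A_t\le-\frac b2t(1-t)$.

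\textbf{Step 4: the case $b<0$.} The target is $E(t)\ge-3t(1-t)g(1)$. The tangent-line bound from convexity of $g$ is too weak: near $c=0$ one has $g'(1)\approx 2g(1)$, which loses the constant. Instead I will show that $G(y)/y^2$ is nonincreasing on $(0,\infty)$. This gives $g(s)\ge s^2g(1)$, hence $g(s)-g(1)\ge-(1-s^2)g(1)$. Summing the two terms then gives exactly
\[
E(t)\ge-g(1)\bigl[(1-t)(1-(1-t)^2)+t(1-t^2)\bigr]=-3t(1-t)g(1),
\]
so $A_t\le\frac3a t(1-t)\log\frac{\sinh c}{c}$, which is the third line of (2.43).

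\textbf{Main obstacle.} The one nontrivial point is the monotonicity of $G(y)/y^2$, i.e. $H(y)\coloneqq2G(y)-yG'(y)\ge0$. I would first try this: $H(0)=0$, and $H'=G'-yG''$ with $(G'-yG'')'=-yG'''$, where $G''=\frac1{y^2}-\frac1{\sinh^2y}$. So it suffices to show that $G''$ is nonincreasing, which should follow from the power series, or from comparing $\sinh^3y$ with $y^3\cosh y$. If that route stalls, I would fall back on a direct series comparison of $2\log\frac{\sinh y}{y}$ with $y\coth y-1$.
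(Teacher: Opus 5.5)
Your proposal is correct and follows the paper's proof essentially step for step: part (1) uses the weighted AM--GM inequality; the case $b<0$ uses the monotonicity of $y\mapsto y^{-2}\log\frac{\sinh y}{y}$, which you reduce to $\sinh^3 y\ge y^3\cosh y$, valid by comparing power-series coefficients; and the case $b>0$ uses what amounts to the paper's bound $\log\frac{\sin(sc)}{s\sin c}\ge\frac{c^2}{6}(1-s^2)$, which you derive from $\frac{1}{\sin^2 y}-\frac{1}{y^2}\ge\frac13$. The only difference is that you justify these two auxiliary inequalities, which the paper asserts without proof.
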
}

\begin{proof}
We first prove $(1)$. Fix \(t\in(0,1)\). By the weighted arithmetic--geometric mean inequality, for all $X, Y > 0$, we have
\[
X+Y
\geq
\biggl(\frac{X}{1-t}\biggr)^{1-t}
\biggl(\frac{Y}{t}\biggr)^t.
\]
By applying this with
$X
=
\sigma_{ab}^{(1-t)}
e^{-ah(0)}$
and $Y
=
\sigma_{ab}^{(t)}
e^{-ah(1)}$,
we obtain
\begin{align*}
\label{eq:2.44}
h(t)
&\leq
-\frac1a
\log\Bigl(
\sigma_{ab}^{(1-t)}
e^{-ah(0)}
+
\sigma_{ab}^{(t)}
e^{-ah(1)}
\Bigr)\\
&\leq
-\frac1a
\log\biggl[
\biggl(
\frac{\sigma_{ab}^{(1-t)}}{1-t}
\biggr)^{1-t}
\biggl(
\frac{\sigma_{ab}^{(t)}}{t}
\biggr)^t
e^{-a\bigl((1-t)h(0)+th(1)\bigr)}
\biggr]\\
&=
(1-t)h(0)
+
th(1)
+
A_t. \tag{2.44}
\end{align*}
One can check directly that $\lim_{t \rightarrow 0} A_t = 0$ and $\lim_{t \rightarrow 1} A_t = 0$, so
the endpoint cases \(t=0\) and \(t=1\) also follow by continuity. This proves $(1)$.

\medskip

To prove $(2)$, if \(b=0\), then
$\sigma_{ab}^{(s)}
=
s$,
and therefore
$A_t
=
0$.
Hence, we obtain 
\[
h(t)
\leq
(1-t)h(0)
+
th(1).
\]

Suppose next that \(b<0\), and let
$r
=
\sqrt{-ab}$.
For \(s\in(0,1]\), define
\[
F(s)
\coloneqq
\frac{\sigma_{ab}^{(s)}}{s}
=
\frac{\sinh(sr)}{s\sinh r}.
\]
Then, we have
\[
A_t
=
-\frac1a
\bigl[
(1-t)\log F(1-t)
+
t\log F(t)
\bigr].
\]

We claim that
\[
\log\biggl(
\frac{\sinh(sr)}{sr}
\biggr)
\geq
s^2
\log\biggl(
\frac{\sinh r}{r}
\biggr) \qquad
\text{for }s\in(0,1].
\]
This follows from the fact that the function
\[
s\longmapsto
\frac{1}{s^2}
\log\biggl(
\frac{\sinh(sr)}{sr}
\biggr)
\]
is nonincreasing on \((0,1]\) and the limit at $s = 0$ is $\frac{r^2}{6}$.
Therefore, we obtain 
\begin{align*}
A_t
&\leq
\frac{3}{a}t(1-t)
\log\biggl(
\frac{\sinh r}{r}
\biggr) = \frac{3}{a}t(1-t)
\log\biggl(
\frac{\sinh\sqrt{-ab}}{\sqrt{-ab}}
\biggr).
\end{align*}

Finally, suppose that \(b>0\), and let
$c = \sqrt{ab}$.
We claim that
\[
\log
\biggl(
\frac{\sin(sc)}{s\sin c}
\biggr)
\geq
\frac{c^2}{6}
\bigl(1-s^2\bigr)
\qquad
\text{for all } s\in(0,1].
\]
Hence, we obtain
\begin{align*}
-aA_t
&=
(1-t)
\log
\biggl(
\frac{\sigma_{ab}^{(1-t)}}{1-t}
\biggr)
+
t
\log
\biggl(
\frac{\sigma_{ab}^{(t)}}{t}
\biggr) \geq
\frac{c^2}{2}
t(1-t).
\end{align*}
Since \(c^2=ab\), we conclude that
\[
A_t
\leq
-\frac{b}{2}
t(1-t).
\]
Substituting these estimates for \(A_t\) into \eqref{eq:2.44} yields \eqref{eq:2.43}. This finishes the proof. 
\end{proof}

\subsection{Exterior powers of self-adjoint operators}
\label{sec:2.3}
In this section, we establish two basic linear algebraic results concerning exterior powers of self-adjoint operators. The first provides an inequality relating the elementary symmetric polynomials of the matrix exponential to the trace of the underlying operator, while the second characterizes lower bounds on the induced operator acting on exterior powers. These results will serve as the main linear algebraic tools in the subsequent sections.

\begin{flemma}
Let $A$ be a symmetric operator on an $n$-dimensional inner product space
$(V,\langle \cdot,\cdot\rangle)$.
Then for any orthonormal basis $\{e_i\}_{i=1}^n$ of $V$ and any
$k\in\{1,\dots,n\}$, we have
\begin{align*}
\label{eq:2.45}
\sigma_k(e^{-A}) \geq \sum_{|I|=k}
\exp \Bigl(
-
\sum_{i\in I}\langle e_i,Ae_i\rangle
\Bigr) \geq \binom{n}{k}  \exp \Bigl(  -\frac{k}{n} \tr A      \Bigr), \tag{2.45}
\end{align*}
where $\exp({-A})$ denotes the matrix exponential of $-A$, and
$\sigma_k$ denotes the $k$-th elementary symmetric polynomial of the
eigenvalues.
\end{flemma}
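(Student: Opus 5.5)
The plan has three steps: pass to exterior powers, apply Jensen's inequality to the eigen-decomposition of each basis $k$-vector, and then apply convexity of the exponential once more.

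\textbf{Step 1: exterior powers.} Recall the identity $\sigma_k(e^{-A})=\tr_{\bigwedge^k V}\bigl(\bigwedge^k e^{-A}\bigr)$. Since $\bigwedge^k e^{-A}=e^{-A^{[k]}}$, where $A^{[k]}$ is the derivation extension of $A$ to $\bigwedge^k V$ (see Section~\ref{sec:2.3}), we get
\[
\sigma_k(e^{-A})=\tr_{\bigwedge^k V}\bigl(e^{-A^{[k]}}\bigr).
\]
For the given orthonormal basis $\{e_i\}$ of $V$, the vectors $e_I=e_{i_1}\wedge\cdots\wedge e_{i_k}$ with $|I|=k$ form an orthonormal basis of $\bigwedge^k V$. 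Evaluating the trace in this basis gives
\[
\sigma_k(e^{-A})=\sum_{|I|=k}\langle e_I,e^{-A^{[k]}}e_I\rangle .
\]

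\textbf{Step 2: Jensen's inequality.} Since $A^{[k]}$ is self-adjoint, diagonalize it as $A^{[k]}=\sum_J \lambda_J P_J$. For each $I$, the weights $p_J=|P_Je_I|^2$ form a probability vector. Convexity of $\exp$ then yields
\[
\langle e_I,e^{-A^{[k]}}e_I\rangle=\sum_J p_J e^{-\lambda_J}\ge \exp\Bigl(-\sum_J p_J\lambda_J\Bigr)=\exp\bigl(-\langle e_I,A^{[k]}e_I\rangle\bigr).
\]
Finally, the derivation formula $A^{[k]}e_I=\sum_{m} e_{i_1}\wedge\cdots\wedge Ae_{i_m}\wedge\cdots\wedge e_{i_k}$ gives $\langle e_I,A^{[k]}e_I\rangle=\sum_{i\in I}\langle e_i,Ae_i\rangle$. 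This proves the first inequality.

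\textbf{Step 3: convexity again.} Apply convexity of $\exp$ once more, now to the uniform average over the $\binom nk$ index sets $I$:
\[
\sum_{|I|=k}\exp\Bigl(-\sum_{i\in I}\langle e_i,Ae_i\rangle\Bigr)\ge \binom nk\exp\Bigl(-\binom nk^{-1}\sum_{|I|=k}\sum_{i\in I}\langle e_i,Ae_i\rangle\Bigr).
\]
Each index $i$ lies in exactly $\binom{n-1}{k-1}$ subsets $I$, so the double sum equals $\binom{n-1}{k-1}\tr A$. Since $\binom{n-1}{k-1}/\binom nk=k/n$, the exponent is $-\frac{k}{n}\tr A$, which is the second inequality.

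The main obstacle is the identification in Step 1, namely $\sigma_k(e^{-A})=\tr e^{-A^{[k]}}$ together with the diagonal-entry formula for $A^{[k]}$. Both can be checked by working in an eigenbasis of $A$, where $e^{-A^{[k]}}$ is diagonal with entries $\prod_{i\in I}e^{-\lambda_i}$; their sum is precisely $\sigma_k$ of the eigenvalues of $e^{-A}$. The diagonal-entry formula in the arbitrary basis $\{e_i\}$ then follows from the derivation formula, since every other term in the expansion of $\langle e_I,A^{[k]}e_I\rangle$ vanishes by orthonormality.
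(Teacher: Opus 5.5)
Your proof is correct. It shares the second inequality with the paper: uniform-weight Jensen, plus the count that each index lies in $\binom{n-1}{k-1}$ subsets, so the exponent becomes $-\frac{k}{n}\tr A$.

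For the first inequality you take a genuinely different route.

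\textbf{The paper's route.} It stays in $V$. By the Schur--Horn theorem, the diagonal vector $(d_1,\dots,d_n)$ with $d_i=\langle e_i,Ae_i\rangle$ lies in the permutation polytope of the eigenvalues $(\lambda_1,\dots,\lambda_n)$. The function $F(x)=\sum_{|I|=k}e^{-\sum_{i\in I}x_i}$ is convex and symmetric, hence Schur-convex, so $F(\lambda)\ge F(d)$.

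\textbf{Your route.} You lift to $\bigwedge^kV$ and write $\sigma_k(e^{-A})=\tr_{\bigwedge^kV}e^{-A^{[k]}}$. You then apply scalar Jensen to the spectral measure of $A^{[k]}$ in each unit vector $e_I$, giving $\langle e_I,e^{-A^{[k]}}e_I\rangle\ge\exp\bigl(-\langle e_I,A^{[k]}e_I\rangle\bigr)$.

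\textbf{Comparison.} Your argument needs only the spectral theorem and one-variable convexity, with no majorization theory. It is really the doubly-stochastic Jensen mechanism behind Schur's theorem, applied directly on $\bigwedge^kV$. There the target function splits into a sum of single-variable convex functions, so no Schur-convexity argument is needed.

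It also gives a sharper, termwise conclusion. The inequality holds for each $I$ separately, and indeed for every unit vector of $\bigwedge^kV$, in particular every decomposable unit $k$-vector $\xi$. This fits naturally with the $\langle A^{[k]}\xi,\xi\rangle$ formulation the paper uses in Corollary 3.1 and Theorem 4.1.

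The paper's version, in exchange, avoids exterior algebra and the identity $\bigwedge^k e^{-A}=e^{-A^{[k]}}$. It works purely with real vectors, at the cost of citing Schur--Horn and Schur-convexity.
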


\begin{proof}
Fix an orthonormal basis $\{e_i\}_{i=1}^n$ of $V$, and let
$d_1,\dots,d_n$ be the diagonal entries of $A$ with respect to this basis.
Then, we have
\[
d_i=\langle e_i,Ae_i\rangle,
\]
and therefore
\[
\sum_{|I|=k}
\exp\left(
-
\sum_{i\in I}\langle e_i,Ae_i\rangle
\right)
=
\sum_{|I|=k}
e^{-\sum_{i\in I} d_i}.
\]

Let $\lambda_1,\dots,\lambda_n$ be the eigenvalues of $A$.
By the Schur--Horn theorem, the vector
$(d_1,\dots,d_n)$ lies in the permutation polytope generated by
$(\lambda_1,\dots,\lambda_n)$.

Now, we consider the function
\[
F(x_1,\dots,x_n)
:=
\sum_{|I|=k}
e^{-\sum_{i\in I}x_i}.
\]
It is straightforward to check $F$ is convex and invariant under permutations of the variables,
hence, $F$ is Schur-convex. Thus, we obtain
\[
F(\lambda_1,\dots,\lambda_n) \geq F(d_1,\dots,d_n),
\]which implies that
\[
\sigma_k(e^{-A}) = \sum_{|I|=k}
e^{-\sum_{i\in I}\lambda_i} \geq \sum_{|I|=k} 
e^{-\sum_{i\in I} d_i}.
\]

For the lower bound, by Jensen's inequality, we obtain 
\begin{align*}
    \frac{1}{\binom{n}{k}} \sum_{|I| = k} \exp \Bigl( - \sum_{i \in I} d_i \Bigr) \geq \exp \Bigl(    - \frac{1}{\binom{n}{k}} \sum_{|I| = k} \sum_{i \in I} d_i \Bigr) = \exp\Bigl( - \frac{k}{n} \tr A \Bigr).
\end{align*}

This finishes the proof.
\end{proof}

\hypertarget{D:2.4}{
\begin{fdefi}
Let $(V,\langle\cdot,\cdot\rangle)$ be an $n$-dimensional inner product space, and let
$A \colon V\to V$
be a self-adjoint operator. We have the following induced operator
\[
A^{[k]} \colon \bigwedge^k V\to \bigwedge^k V,
\]
which is defined by
\[
A^{[k]}(v_1\wedge\cdots\wedge v_k)
:=
\sum_{j=1}^k
v_1\wedge\cdots\wedge Av_j\wedge\cdots\wedge v_k,
\]
for all $v_1,\dots,v_k\in V$.
\end{fdefi}}

\begin{flemma}
    Let $(V,\langle\cdot,\cdot\rangle)$ be an $n$-dimensional inner product space, and let
$A \colon V\to V$
be a self-adjoint operator. Then, for any $1 \leq k \leq n$, we have
    \begin{align*}
    \label{eq:2.46}
        \tr_{\bigwedge^k V} \bigl(  A^{[k]}   \bigr) =  {\binom{n-1}{k-1}} \tr (A). \tag{2.46}
    \end{align*}
\end{flemma}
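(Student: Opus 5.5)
The plan is to compute the trace of $A^{[k]}$ in an explicit orthonormal basis of $\bigwedge^k V$, reduce to diagonal entries of $A$, and then count index sets.

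\textbf{Basis and diagonal entries.} Fix an orthonormal basis $\{e_i\}_{i=1}^n$ of $V$. The wedge products
\[
e_I \coloneqq e_{i_1}\wedge\cdots\wedge e_{i_k}, \qquad I=\{i_1<\cdots<i_k\},
\]
form an orthonormal basis of $\bigwedge^k V$ for the induced inner product, since $\langle v_1\wedge\cdots\wedge v_k, w_1\wedge\cdots\wedge w_k\rangle=\det(\langle v_a,w_b\rangle)$. Hence
\[
\tr_{\bigwedge^k V}\bigl(A^{[k]}\bigr)=\sum_{|I|=k}\langle e_I, A^{[k]}e_I\rangle .
\]

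\textbf{Computing a single diagonal entry.} Fix $I$ and expand each $Ae_{i_j}=\sum_{m}\langle e_m,Ae_{i_j}\rangle e_m$ inside the $j$-th slot of $A^{[k]}e_I$. If $m\in I\setminus\{i_j\}$, the resulting wedge has a repeated vector and vanishes. If $m\notin I$, the resulting wedge is orthogonal to $e_I$. The only term with a nonzero component along $e_I$ is $m=i_j$, which contributes $\langle e_{i_j},Ae_{i_j}\rangle$. Therefore
\[
\langle e_I,A^{[k]}e_I\rangle=\sum_{i\in I}\langle e_i,Ae_i\rangle .
\]
Self-adjointness is not actually needed for this identity; note also that $A^{[k]}$ is well defined because its defining formula is multilinear and alternating.

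\textbf{Counting.} Summing over all $I$ with $|I|=k$, each index $i$ lies in exactly $\binom{n-1}{k-1}$ subsets $I$. This is the same counting used in the remark on monotonicity in $k$. Consequently
\[
\tr_{\bigwedge^k V}\bigl(A^{[k]}\bigr)=\binom{n-1}{k-1}\sum_{i=1}^n\langle e_i,Ae_i\rangle=\binom{n-1}{k-1}\tr(A).
\]

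\textbf{Alternative via eigenbasis.} One can instead diagonalize $A$ with eigenvalues $\lambda_i$. Then $e_I$ is an eigenvector of $A^{[k]}$ with eigenvalue $\sum_{i\in I}\lambda_i$, and the same count gives the result.

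\textbf{Main obstacle.} There is no real obstacle; the only step needing care is the claim that the off-diagonal terms contribute nothing to $\langle e_I,A^{[k]}e_I\rangle$. This follows from orthonormality of the $e_I$ together with the alternating property.
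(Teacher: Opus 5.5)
Your proof is correct and is essentially the paper's argument. The paper diagonalizes $A$, observes that $A^{[k]}$ has eigenvalues $\lambda_{i_1}+\cdots+\lambda_{i_k}$ on the basis $e_{i_1}\wedge\cdots\wedge e_{i_k}$, and counts that each index lies in $\binom{n-1}{k-1}$ subsets, which is exactly your ``alternative via eigenbasis.'' Your main route through diagonal entries in an arbitrary orthonormal basis is a harmless variant that, as you note, does not need self-adjointness.
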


\begin{proof}
    First, note that if $\lambda_1,\dots,\lambda_n$ are the eigenvalues of $A \colon V \rightarrow V$, then the eigenvalues
of $A^{[k]}$ are
\[
\lambda_{i_1}+\cdots+\lambda_{i_k},
\qquad
1\le i_1<\cdots<i_k\le n.
\]Hence, we have
\begin{align*}
\tr_{\bigwedge^k V} \bigl(  A^{[k]}   \bigr) &= \sum_{1\le i_1<\cdots<i_k\le n} ( \lambda_{i_1}+\cdots+\lambda_{i_k} ) \\
&= \binom{n-1}{k-1} ( \lambda_1 + \cdots + \lambda_n ) = \binom{n-1}{k-1} \tr (A).
\end{align*} 
This finishes the proof. 
\end{proof}






\begin{fdefi}[Decomposable unit $k$-vector]
Let $(V,\langle\cdot,\cdot\rangle)$ be an inner product space. A vector
$\xi\in\bigwedge^kV$ is called a \emph{decomposable unit $k$-vector} if there
exist orthonormal vectors $e_1,\dots,e_k\in V$ such that
\[
\xi=e_1\wedge\cdots\wedge e_k.
\]
Equivalently, $\xi$ is decomposable and satisfies $|\xi|=1$.
\end{fdefi}

\hypertarget{L:2.6}{
\begin{flemma}
Let $(V,\langle\cdot,\cdot\rangle)$ be an $n$-dimensional inner product space,
and let $A:V\to V$ be a self-adjoint operator. Fix
$k\in\{1,\dots,n\}$ and let $K\in\mathbb R$. Then the following are equivalent:
\begin{enumerate}
    \item For every $k$-dimensional subspace $\Sigma\subset V$,
    $\tr(A|_\Sigma)\ge K$.
    \item For every decomposable unit $k$-vector
    $\xi\in\bigwedge^k V$,
    $\langle A^{[k]}\xi,\xi\rangle\ge K$.
    \item The induced operator $A^{[k]}$ satisfies
    $A^{[k]}\geq K \cdot \Id$
    on $\bigwedge^k V$.
\end{enumerate}
\end{flemma}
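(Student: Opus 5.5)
The plan is to prove the cycle (1)$\Leftrightarrow$(2) and then (2)$\Leftrightarrow$(3). The key computation links a $k$-dimensional subspace to the quadratic form of $A^{[k]}$.

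For the first step, let $\Sigma\subset V$ be $k$-dimensional with orthonormal basis $e_1,\dots,e_k$, and set $\xi=e_1\wedge\cdots\wedge e_k$. Expanding Definition~\hyperlink{D:2.4}{2.4} and using the induced inner product $\langle v_1\wedge\cdots\wedge v_k,w_1\wedge\cdots\wedge w_k\rangle=\det(\langle v_i,w_j\rangle)$ gives
\[
\langle A^{[k]}\xi,\xi\rangle=\sum_{j=1}^k\det\bigl(\langle v^{(j)}_i,e_l\rangle\bigr),
\]
where $v^{(j)}$ is the tuple with $e_j$ replaced by $Ae_j$. The $j$-th determinant is the identity matrix with its $j$-th row replaced by $(\langle Ae_j,e_l\rangle)_l$, so it equals $\langle Ae_j,e_j\rangle$. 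Hence $\langle A^{[k]}\xi,\xi\rangle=\sum_j\langle Ae_j,e_j\rangle=\tr(A|_\Sigma)$, where $A|_\Sigma$ means $\pi_\Sigma A|_\Sigma$, as in the paper. Every decomposable unit $k$-vector arises from some $\Sigma$ in this way, and every $\Sigma$ gives one. Therefore (1)$\Leftrightarrow$(2).

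The implication (3)$\Rightarrow$(2) is immediate by restricting to decomposable unit vectors. For (2)$\Rightarrow$(3), diagonalize $A$ with an orthonormal eigenbasis $e_1,\dots,e_n$ and eigenvalues $\lambda_i$. The wedges $e_I=e_{i_1}\wedge\cdots\wedge e_{i_k}$ form an orthonormal basis of $\bigwedge^kV$, and $A^{[k]}e_I=(\sum_{i\in I}\lambda_i)e_I$, as already used in the proof of Lemma~2.5. Thus $A^{[k]}$ is diagonal in this basis, with smallest eigenvalue $\min_{|I|=k}\sum_{i\in I}\lambda_i=\min_I\langle A^{[k]}e_I,e_I\rangle$. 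Each $e_I$ is decomposable, so (2) makes this minimum at least $K$, which gives $A^{[k]}\ge K\cdot\Id$.

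The main obstacle is this last step. A general unit vector in $\bigwedge^kV$ need not be decomposable, so (2) does not directly control the full quadratic form. The argument works because the eigenbasis of $A^{[k]}$ can be chosen to consist of decomposable vectors, and that relies on $A$ being self-adjoint. The only other care needed is to fix the inner-product convention on $\bigwedge^kV$ so that the determinant computation is valid.
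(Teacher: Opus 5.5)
Your proof is correct and follows essentially the same route as the paper. The identity $\langle A^{[k]}\xi,\xi\rangle=\tr(A|_\Sigma)$ for $\xi=e_1\wedge\cdots\wedge e_k$ gives (1)$\Leftrightarrow$(2), and diagonalizing $A^{[k]}$ in the orthonormal eigen-wedge basis $e_I$ handles (3). The one difference is how you close the loop: you use the trivial (3)$\Rightarrow$(2) and the fact that each $e_I$ is decomposable, whereas the paper reduces both (1) and (3) to $\lambda_1+\cdots+\lambda_k\ge K$ via the min-max (Ky Fan) principle, so your version avoids that principle and is slightly more self-contained.
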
}

\begin{proof}
First, let
$\lambda_1\leq   \cdots \leq \lambda_n$
be the eigenvalues of $A$, and let $\{e_1,\dots,e_n\}$ be an orthonormal
eigenbasis. By the min-max principle, the minimum of $\tr(A|_\Sigma)$ over all
$k$-dimensional subspaces $\Sigma\subset V$ is
\[
\lambda_1+\cdots+\lambda_k.
\]
Hence, condition $(1)$ is equivalent to
\[
\lambda_1+\cdots+\lambda_k\ge K.
\]

We next compare condition $(1)$ with condition $(2)$. Let
$\Sigma\subset V$ be a $k$-dimensional subspace, and let
$\{u_1,\dots,u_k\}$ be an orthonormal basis of $\Sigma$. We set
$\xi \coloneqq u_1\wedge\cdots\wedge u_k$,
then $\xi$ is a decomposable unit $k$-vector. By Definition~\hyperlink{D:2.4}{2.4}, we have
\[
A^{[k]}(u_1\wedge\cdots\wedge u_k)
=
\sum_{j=1}^k
u_1\wedge\cdots\wedge Au_j\wedge\cdots\wedge u_k.
\]
By taking the inner product of $A^{[k]} \xi$ with $\xi$, we obtain
\[
\bigl\langle A^{[k]}\xi,\xi\bigr\rangle
=
\sum_{j=1}^k \langle Au_j,u_j\rangle
=
\tr(A|_\Sigma).
\]
Therefore, condition $(1)$ is equivalent to condition $(2)$.

\medskip

It remains to compare condition $(2)$ with condition $(3)$. We consider the orthonormal basis
\[
e_{i_1}\wedge\cdots\wedge e_{i_k},
\qquad
1\leq i_1<\cdots<i_k\leq n,
\]
of $\bigwedge^k V$, then the operator $A^{[k]}$ becomes diagonal. Indeed, we obtain
\[
A^{[k]}(e_{i_1}\wedge\cdots\wedge e_{i_k})
=
(\lambda_{i_1}+\cdots+\lambda_{i_k})
e_{i_1}\wedge\cdots\wedge e_{i_k}.
\]
Thus,
$A^{[k]}\ge K\Id$
if and only if
$\lambda_{i_1}+\cdots+\lambda_{i_k}\ge K$
for every $1\leq i_1<\cdots<i_k\leq n$. Since
$\lambda_1+\cdots+\lambda_k$
is the smallest one, $A^{[k]}\ge K\Id$ is equivalent to
\[
\lambda_1+\cdots+\lambda_k\ge K.
\]
Hence, condition $(3)$ is also equivalent to condition $(1)$.
Therefore, all three conditions are equivalent. This finishes the proof. 
\end{proof}

\begin{fdefi}
Let $(V,\langle\cdot,\cdot\rangle)$ be an $n$-dimensional inner product space, and let
$A \colon V\to V$
be a self-adjoint operator, and let $k\in\{1,\dots,n\}$, we define
\[
\operatorname{tr}_{\bigwedge^k V}
\left(
\sinh^2\bigl(A^{[k]}\bigr)
\right)
\]
to be the trace of the operator
\[
\sinh^2\bigl(A^{[k]}\bigr)
=
\frac{1}{4}
\left(
e^{A^{[k]}}
-
e^{-A^{[k]}}
\right)^2
\]
acting on $\bigwedge^k V$.
Equivalently, if $\lambda_1,\dots,\lambda_n$ are the eigenvalues of $A$, then
\[
\operatorname{tr}_{\bigwedge^k V}
\left(
\sinh^2\bigl(A^{[k]}\bigr)
\right)
=
\sum_{1\le i_1<\cdots<i_k\le n}
\sinh^2\left(
\lambda_{i_1}+\cdots+\lambda_{i_k}
\right).
\]
\end{fdefi}

\subsection{Preliminaries on optimal transport}
\label{sec:2.4}
In this section, we review the basic notions of optimal transport on Riemannian manifolds that will be needed throughout the paper.
In Section~\ref{sec:2.4.1}, we recall the classical McCann's theorem on the existence and properties of optimal transport maps for the quadratic cost on Riemannian manifolds.
Building on this result, we introduce Wasserstein geodesics and our entropy tensor in Section~\ref{sec:2.4.2}, and explain their relationship with the classical entropy functional.

\medskip

Throughout this section, $(M^n, g)$ denotes a complete Riemannian manifold, and $d(\cdot, \cdot)$ denotes the associated geodesic distance.
Unless otherwise specified, ``a.e." stands for ``almost everywhere with respect to the Riemannian volume measure on $M$".
Given $\mu_0, \mu_1\in \cPac(M)$, the optimal transport considers the minimization problem:
\begin{align*}
\label{eq:2.47}
    \cW_2^2(\mu_0, \mu_1) \coloneqq \inf_\pi \int_{M\times M} d(x,y)^2 d\pi(x,y), \tag{2.47}
\end{align*}where the infimum is taken over all couplings $\pi$ of $\mu_0$ and $\mu_1$, that is, probability measures on $M \times M$ satisfying
\begin{align*}
    (P_1)_\sharp \pi=\mu_0, \quad (P_2)_\sharp \pi=\mu_1,
\end{align*}where $P_i \colon M \times M \rightarrow M$ denotes the projection onto the $i$-th factor for $i \in \{1, 2\}$.
By the Kantorovich existence theorem, the infimum in \eqref{eq:2.47} is attained by an \emph{optimal coupling}, and the quantity $\cW_2$ defines a metric on $\cPac(M)$, called the Wasserstein $2$-distance (see, for example, \cite{villani2009}).

\subsubsection{McCann's theorem}
\label{sec:2.4.1}
One of the fundamental results in optimal transport on Riemannian manifolds is McCann's theorem \cite{mccann_polar_2001}, which characterizes optimal couplings for the quadratic cost \eqref{eq:2.47} in terms of optimal transport maps induced by $\frac{d^2}{2}$-concave functions. To state the theorem, we first recall the following definition. 
\begin{fdefi}
    We say that a function $\varphi \colon M \to \mathbb R \cup \{-\infty\}$
is $\frac{d^2}{2}$-concave if it is not identically $-\infty$ and there
exists a function
\[
\psi \colon M \to \mathbb R \cup \{\pm \infty\}
\]
such that, for every $x\in M$,
\begin{align*}
\label{eq:2.48}
\varphi(x)
=
\inf_{y\in M}
\left\{
\frac12 d^2(x,y)-\psi(y)
\right\}.
\tag{2.48}
\end{align*}
\end{fdefi}

We are now ready to state McCann's theorem, which provides a characterization of optimal transport maps on Riemannian manifolds in terms of $\frac{d^2}{2}$-concave functions.

\begin{fthm}[McCann's theorem, {\cite[Theorems 9]{mccann_polar_2001}}]

Let $\mu_0,\mu_1\in\mathcal P_c^{\mathrm{ac}}(M)$. Then there exists a
$\frac{d^2}{2}$-concave function $-\theta$ such that the map
\[
F:M\to M,
\qquad
F(x)=\exp_x\bigl(\nabla\theta(x)\bigr),
\]
defined for $\mu_0$-almost every $x\in M$, satisfies
\[
F_\sharp\mu_0=\mu_1,
\]
and
\begin{align*}
\label{eq:2.49}
\int_M
\frac12 d^2\bigl(x,F(x)\bigr)\,d\mu_0(x)
=
\frac12\cW_2^2(\mu_0,\mu_1).
\tag{2.49}
\end{align*}
\end{fthm}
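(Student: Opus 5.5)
The plan follows McCann's original argument: Kantorovich duality produces a $\frac{d^2}{2}$-concave potential, and a first-order condition at points of differentiability identifies the transport map. Since $\mu_0,\mu_1$ are compactly supported, fix a compact set $B\subset M$ containing both supports. By the Kantorovich existence theorem recalled after \eqref{eq:2.47}, there is an optimal coupling $\pi$ for the cost $c(x,y)=\frac12 d^2(x,y)$. The first step is to show that $\supp\pi$ is $c$-cyclically monotone. If it were not, a standard perturbation on small neighbourhoods of finitely many points of the support would rearrange mass and strictly lower the cost, contradicting optimality.

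Next, a Rockafellar-type construction turns this into a potential. Fix $(x_0,y_0)\in\supp\pi$ and define
\[
\varphi(x)\coloneqq\inf\Bigl\{c(x,y_m)-c(x_m,y_m)+c(x_m,y_{m-1})-\cdots-c(x_0,y_0)\Bigr\},
\]
where the infimum runs over all finite chains $(x_i,y_i)\in\supp\pi$. Cyclical monotonicity gives $\varphi(x_0)=0$, so $\varphi\not\equiv-\infty$. The function $\varphi$ has the form \eqref{eq:2.48}, with $\psi(y)$ an infimum over chains ending at $y\in\supp\mu_1$ and $\psi=-\infty$ off $\supp\mu_1$. It also satisfies $\varphi(x)+\psi(y)=c(x,y)$ on $\supp\pi$. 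We set $-\theta\coloneqq\varphi$.

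Because the infimum in \eqref{eq:2.48} effectively runs over $y$ in the compact set $\supp\mu_1$, $\varphi$ is a locally uniform infimum of the functions $x\mapsto\frac12 d^2(x,y)$. These are locally uniformly Lipschitz, and they are uniformly semiconcave on compact sets: the Hessian of $\frac12 d^2(\cdot,y)$ is bounded above in the barrier sense, including at cut points. Hence $\varphi$ is locally Lipschitz and locally semiconcave on the interior of its finiteness domain. One checks that this domain contains $\supp\mu_0$, up to a negligible boundary that I expect to handle by the compactness of $B$. Rademacher's theorem (and Alexandrov's theorem, for the Hessian needed later) then makes $\varphi$ differentiable $\Vol_g$-a.e., hence $\mu_0$-a.e. since $\mu_0\ll\Vol_g$.

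Now fix $x$ at which $\varphi$ is differentiable, and let $y$ satisfy $(x,y)\in\supp\pi$. Then $z\mapsto c(z,y)-\varphi(z)$ attains its minimum, namely $\psi(y)$, at $z=x$. This is the key step and the main obstacle. Since $\varphi$ is differentiable at $x$ and $\frac12 d^2(\cdot,y)$ is semiconcave, $\frac12 d^2(\cdot,y)$ is touched from below at $x$ by the differentiable function $\varphi+\psi(y)$. A semiconcave function touched from below by a differentiable function is itself differentiable there. So $\frac12 d^2(\cdot,y)$ is differentiable at $x$, and its gradient equals $\nabla\varphi(x)=-\nabla\theta(x)$. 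Differentiability of $\frac12 d^2(\cdot,y)$ at $x$ forces a unique minimizing geodesic from $x$ to $y$, with $\nabla_x\frac12 d^2(x,y)=-\exp_x^{-1}(y)$. It follows that $y=\exp_x(\nabla\theta(x))$.

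Thus $\pi$ is concentrated on the graph of $F(x)=\exp_x(\nabla\theta(x))$, defined $\mu_0$-a.e. Therefore $\pi=(\Id\times F)_\sharp\mu_0$, which gives $F_\sharp\mu_0=\mu_1$. Finally, since $\pi$ is optimal,
\[
\frac12\cW_2^2(\mu_0,\mu_1)=\int c\,d\pi=\int_M\frac12 d^2\bigl(x,F(x)\bigr)\,d\mu_0(x),
\]
which is \eqref{eq:2.49}.
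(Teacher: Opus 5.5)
The paper gives no proof of this statement; it quotes it from McCann's Theorem 9. Your outline (cyclical monotonicity of $\supp\pi$, Rockafellar's construction of the $\frac{d^2}{2}$-concave potential, Rademacher, and the first-order condition via the supergradient $-\exp_x^{-1}(y)$ of $\frac12 d^2(\cdot,y)$) is essentially McCann's own argument, and it is correct.

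The step you left hedged closes at once. Since $\varphi(x_0)=0$, you get $\psi\le\frac12 d^2(x_0,\cdot)$, and $\psi=-\infty$ off the compact set $\supp\mu_1$; hence $\varphi$ is real-valued and locally Lipschitz on all of $M$, and there is no boundary of the finiteness domain to handle.
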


\begin{frmk}
By \cite[Theorem 8]{mccann_polar_2001}, the map $F$ is uniquely determined $\mu_0$-almost everywhere by
the measures $\mu_0$ and $\mu_1$. We call $F$ the
\emph{optimal transport map} from $\mu_0$ to $\mu_1$ and the function $-\theta$ a
\emph{Kantorovich potential} associated with $F$.
Such potentials are locally Lipschitz, and thus $F$ is well defined (see \cite[Lemma 4]{mccann_polar_2001}).
\end{frmk}

\subsubsection{Wasserstein geodesic and entropy tensor}
\label{sec:2.4.2}
In \cite{cordero-erausquin_riemannian_2001}, they introduce the family of maps
\begin{align*}
    F_t(x) \coloneqq \exp_x(t\nabla \theta(x)), \qquad t \in [0, 1],
\end{align*}
which induces the Wasserstein geodesic from $\mu_0$ to $\mu_1$ by $\mu_t \coloneqq (F_t)_\sharp \mu_0$; that is, for every $s,t\in [0,1]$, we have 
\begin{align*}
    \cW_2(\mu_s, \mu_t)=|s-t|\cW_2(\mu_0, \mu_1).
\end{align*}
This $\mu_t$ is the so-called the displacement interpolation.
Moreover, since $\theta$ is semiconvex, $\Hess_x\theta$ exists almost everywhere in the Alexandrov sense \@ \cite[Theorem 4.2]{cordero-erausquin_riemannian_2001}; namely,
\begin{align*}
\Hess_x\theta
\end{align*}
is a symmetric linear operator on $T_xM$ satisfying
\begin{align*}
\theta(\exp_x(\varepsilon u))
=
\theta(x)
+
\varepsilon\, g_x(\nabla\theta(x),u)
+
\frac{\varepsilon^2}{2}
g_x(u,\Hess_x\theta\,u)
+
o(\varepsilon^2).
\end{align*}
for every $u\in T_xM$ as $\varepsilon \rightarrow 0$.
As a consequence \cite[Theorem 4.2]{cordero-erausquin_riemannian_2001}, we have
\begin{align*}
F(x)\notin \operatorname{cut}(x),
\end{align*}for almost every $x \in M$,
where $\operatorname{cut}(x)\subseteq M$ denotes the cut locus of $x$, the set of points
in $M$ that cannot be joined to $x$ by a unique minimizing geodesic.

For later use, we introduce the following notation.
\begin{fdefi}
    Given $\mu_0, \mu_1\in \cPac(M)$, let $-\theta$ be a Kantorovich potential function such that $F(x) \coloneqq \exp_x(\nabla \theta(x))$ is the optimal transport map from $\mu_0$ to $\mu_1$.
    We define
    \begin{align*}
    \label{eq:2.50}
        D(\mu_0,\mu_1) \coloneqq \{x\in \supp \mu_0 \mid \, \Hess \theta(x) \text{ exists}\}. \tag{2.50}
    \end{align*}
\end{fdefi}

By the preceding discussion, the set $D(\mu_0, \mu_1)$ has full $\mu_0$-measure.
We now fix $x\in D(\mu_0, \mu_1)$ and let
\begin{align*}
    \gamma(t)=F_t(x)=\exp_x(t\nabla \theta(x)), \qquad t\in [0,1],
\end{align*}be the unique minimizing geodesic from $x$ to $F(x)$. Similar to the construction in Section~\ref{sec:2.1.2}, let 
$\tJ(t)$ denote the Jacobi field matrix along $\gamma(t)$ satisfying the initial conditions
\begin{align*}
\begin{cases}
    \tJ(0)&=\Id;\\
\dot{\tJ}(0)&=\Hess \theta(x).
\end{cases}
\end{align*}
Since $\gamma$ is minimizing, $\tJ(t)$ is invertible for every $t\in[0,1]$. We therefore define
\begin{align*}
    \tU_t (x) \coloneqq \dot{\tJ}(t) \tJ^{-1}(t),
\end{align*}
which can be proved to be symmetric (see e.g.\@ \cite[p. 368]{villani2009}).
Given $f\in C^\infty(M)$ and $k\in \{1, \cdots, n\}$, we further define the weighted endomorphism 
\begin{align*}
\label{eq:2.51}
    \tU_{t; k, f}(x) \coloneqq \tU_t(x) - \frac{1}{k} \langle \nabla f, \dot \gamma(s) \rangle \Id.
    \tag{2.51}
\end{align*}

The following lemma establishes Riccati-type inequalities for the
weighted endomorphism \(\tU_{t;k,f}\), which will serve as a key
ingredient in the subsequent analysis.

\begin{flemma}[Weighted Riccati inequalities]
Let
\(\mu_0,\mu_1\in\cPac(M)\),
let
\(x\in D(\mu_0,\mu_1)\),
and let
\[
\gamma(t)
=
\exp_x\bigl(t\nabla\theta(x)\bigr),
\qquad
t\in[0,1],
\]
be the associated minimizing geodesic.
For each \(t\in[0,1]\), let
\(\Sigma_t\subset T_{\gamma(t)}M\)
be the parallel transport of a \(k\)-dimensional subspace
\(\Sigma\subset T_xM\)
along \(\gamma\). Then the weighted endomorphism
\(\tU_{t;k,f}\)
satisfies
\begin{align*}
\label{eq:2.52}
\tr\bigl(\dot{\tU}_{t;k,f}\big|_{\Sigma}\bigr)
\le
-
\frac{1}{k}
\bigl(
\tr\bigl(\tU_{t;k,f}\big|_{\Sigma}\bigr)
+
\langle\nabla f,\dot\gamma(t)\rangle
\bigr)^2
-
\Ric_{k,f}(\Sigma_t,\dot\gamma(t)). 
\tag{2.52}
\end{align*}
Moreover, if \(N>k\), then we have
\begin{align*}
\label{eq:2.53}
\tr\bigl(\dot{\tU}_{t;k,f}\big|_{\Sigma}\bigr)
\le
-
\frac{1}{N}
\tr\bigl(\tU_{t;k,f}\big|_{\Sigma}\bigr)^2
-
\Ric_{k,f}^{N}(\Sigma_t,\dot\gamma(t)). \tag{2.53}
\end{align*}
\end{flemma}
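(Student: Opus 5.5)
The plan is to work entirely in the parallel orthonormal frame $\{e^i(t)\}$ along $\gamma$ from Section~\ref{sec:2.1.2}. In that frame the family $\Sigma_t$ is a \emph{fixed} coordinate subspace, so $\tr(A|_{\Sigma_t})=\tr(PAP)$, where $P$ is the orthogonal projection onto that subspace and is constant in $t$. Consequently, restricting to $\Sigma$ commutes with differentiation in $t$:
\[
\tr\bigl(\dot{\tU}_{t;k,f}\big|_\Sigma\bigr)=\tfrac{d}{dt}\tr\bigl(\tU_{t;k,f}\big|_\Sigma\bigr).
\]
I read the $\dot\gamma(s)$ in \eqref{eq:2.51} as $\dot\gamma(t)$. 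I will use that $\tU_t$ is symmetric, as recalled from \cite{villani2009}, and that $\tJ$ is invertible on $[0,1]$. The latter makes the matrix Riccati equation \eqref{eq:2.18}, $\dot\tU+\tU^2+\tR=0$, available for all $t$.

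\textbf{Step 1 (unweighted trace inequality).} Take an orthonormal basis $e_1,\dots,e_k$ of $\Sigma$ in the parallel frame and restrict \eqref{eq:2.18}. The curvature term gives
\[
\tr(P\tR P)=\sum_{i}\langle \Riem(e_i,\dot\gamma)\dot\gamma,e_i\rangle=\Ric_k(\Sigma_t,\dot\gamma(t)),
\]
directly from Definition~\ref{def:interm_ric}. For the quadratic term, symmetry of $\tU$ gives
\[
\tr(P\tU^2P)=\sum_{i\le k}|\tU e_i|^2\ \ge\ \sum_{i\le k}|P\tU e_i|^2=\tr\bigl((P\tU P)^2\bigr)\ \ge\ \tfrac1k\bigl(\tr(P\tU P)\bigr)^2,
\]
where the last step is Cauchy--Schwarz applied to the $k$ eigenvalues of the symmetric matrix $P\tU P|_\Sigma$. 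Hence
\[
\tr(\dot\tU|_\Sigma)\le -\tfrac1k\bigl(\tr(\tU|_\Sigma)\bigr)^2-\Ric_k(\Sigma_t,\dot\gamma).
\]

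\textbf{Step 2 (weighting).} Since $\gamma$ is a geodesic,
\[
\tfrac{d}{dt}\langle\nabla f,\dot\gamma\rangle=\Hess f(\dot\gamma,\dot\gamma).
\]
Therefore $\dot\tU_{t;k,f}=\dot\tU-\tfrac1k\Hess f(\dot\gamma,\dot\gamma)\Id$. Taking the trace over the $k$-dimensional $\Sigma$ turns the correction into $-\Hess f(\dot\gamma,\dot\gamma)$. We also have $\tr(\tU|_\Sigma)=\tr(\tU_{t;k,f}|_\Sigma)+\langle\nabla f,\dot\gamma\rangle$. Substituting both into Step~1, and using $\Ric_k+\Hess f=\Ric_{k,f}$ from \eqref{eq:2.8}, gives exactly \eqref{eq:2.52}.

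\textbf{Step 3 (the $N$-version).} Write $a=\tr(\tU_{t;k,f}|_\Sigma)$ and $b=\langle\nabla f,\dot\gamma\rangle$. Decompose $a=(a+b)+(-b)$ and apply Cauchy--Schwarz with weights $k$ and $N-k$, which is possible since $N>k$:
\[
a^2\le N\Bigl(\tfrac{(a+b)^2}{k}+\tfrac{b^2}{N-k}\Bigr),\qquad\text{so}\qquad -\tfrac{(a+b)^2}{k}\le -\tfrac{a^2}{N}+\tfrac{b^2}{N-k}.
\]
Inserting this into \eqref{eq:2.52} and absorbing $\tfrac{b^2}{N-k}=\tfrac{1}{N-k}df(\dot\gamma)^2$ into the curvature via \eqref{eq:2.9} yields \eqref{eq:2.53}.

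The only delicate point, and the step I expect to need the most care, is Step~1. It requires justifying that the Riccati equation holds for the Alexandrov-Hessian initial data, which follows from invertibility of $\tJ$ along the minimizing geodesic. It also requires the inequality $\tr(P\tU^2P)\ge\tr((P\tU P)^2)$, and this is where the symmetry of $\tU$ is essential. Everything after that is algebraic.
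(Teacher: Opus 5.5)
Your proposal is correct and follows essentially the same route as the paper. Like the paper, you trace the matrix Riccati equation over the parallel-transported $\Sigma$, use Cauchy--Schwarz to bound the quadratic term, shift by $\frac1k\langle\nabla f,\dot\gamma\rangle$ to get $-\frac1k\bigl(\tr(\tU_{t;k,f}|_\Sigma)+\langle\nabla f,\dot\gamma\rangle\bigr)^2-\Ric_{k,f}$, and finish with the elementary inequality (2.59), which is exactly your weighted Cauchy--Schwarz in Step 3. The differences are cosmetic: you apply Cauchy--Schwarz to $\tU$ before shifting rather than to $\tU_{t;k,f}$ after (equivalent, since the deficit is shift-invariant), and you spell out the compression step $\tr(P\tU^2P)\ge\tr\bigl((P\tU P)^2\bigr)$, which the paper uses without comment.
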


\begin{proof}
First, similar to the derivation of \eqref{eq:2.18} and \eqref{eq:2.19}, we find that $\tU_s$ satisfies the matrix Riccati equation:
\begin{align*}
\label{eq:2.54}
\dot \tU_s + \tU_s^2 + \tR_s = 0.
\tag{2.54}
\end{align*}Consequently, $\tU_{s; k, f}$ satisfies the following weighted matrix Riccati equation:
\begin{align*}
\label{eq:2.55}
    \dot \tU_{s; k, f} 
    &= - \tU_{s; k, f}^2 -  \frac{2}{k} \langle \nabla f, \dot \gamma(s) \rangle \tU_{s; k, f}   - \tR_s \\
    &\kern2em - \frac{1}{k} \Bigl(  \nabla^2f(\dot \gamma, \dot \gamma) + \frac{1}{k} \langle \nabla f, \dot \gamma(s) \rangle^2 \Bigr) \Id. \tag{2.55}
\end{align*}

Let $\Sigma \subset T_x M$ be a $k$-dimensional subspace, then by taking the trace of \eqref{eq:2.55} over $\Sigma$, we obtain
\begin{align*}
\label{eq:2.56}
    &\kern-2em \tr \bigl( \dot \tU_{s; k, f}  |_\Sigma \bigr) \\
    &=  - \tr \bigl( \tU_s^2 |_\Sigma \bigr) - \Ric_k(\Sigma_s, \dot \gamma(s)) -   \nabla^2f(\dot \gamma, \dot \gamma) \tag{2.56} \\
\label{eq:2.57}
    &= - \tr \bigl( \tU_{s; k, f}^2 |_\Sigma   \bigr) -  \frac{2}{k} \langle \nabla f, \dot \gamma(s) \rangle \tr \bigl( \tU_{s; k, f} |_\Sigma \bigr)   -  \Ric_k(\Sigma_s, \dot \gamma(s)) \\
    &\kern2em - \Bigl(  \nabla^2f(\dot \gamma, \dot \gamma)  + \frac{1}{k} \langle \nabla f, \dot \gamma(s) \rangle^2 \Bigr), \tag{2.57}
\end{align*}where $\Sigma_s$ denotes the parallel transport of $\Sigma$ along $\gamma$.

Since $\tU_{s; k, f}$ is symmetric, the Cauchy--Schwarz inequality implies 
\begin{align*}
\tr\bigl(\tU_{s;k,f}^2\big|_\Sigma\bigr)
\ge
\frac{1}{k}
\tr\bigl(\tU_{s;k,f}\big|_\Sigma\bigr)^2.
\end{align*}
By substituting this estimate into \eqref{eq:2.57}, we obtain
\begin{align*}
\label{eq:2.58}
    &\kern-2em \tr \bigl( \dot \tU_{s; k, f} |_\Sigma \bigr) \\
    &\leq - \frac{1}{k} \tr \bigl( \tU_{s; k, f} |_\Sigma   \bigr)^2 -  \frac{2}{k} \langle \nabla f, \dot \gamma(s) \rangle \tr \bigl( \tU_{k, f} |_\Sigma \bigr)   -  \Ric_k(\Sigma_s, \dot \gamma(s)) \\
    &\kern2em - \Bigl(  \nabla^2f(\dot \gamma, \dot \gamma)  + \frac{1}{k} \langle \nabla f, \dot \gamma(s) \rangle^2 \Bigr) \\
    &= - \frac{1}{k} \bigl(   \tr \bigl( \tU_{s; k, f} |_\Sigma   \bigr) +  \langle \nabla f, \dot \gamma(s) \rangle   \bigr)^2 - \Ric_{k, f}(\Sigma_s, \dot \gamma(s)). \tag{2.58}
\end{align*}
This proves inequality \eqref{eq:2.52}.
Next, we recall the following elementary inequality used in deriving the $CD(K, N)$ condition:
\begin{align*}
\label{eq:2.59}
    \frac{(m+a)^2}{k} \geq \frac{m^2}{N} - \frac{a^2}{N-k}  \tag{2.59}
\end{align*}for $N > k$. By applying \eqref{eq:2.59} with 
\begin{align*}
m = \tr(\tU_{s; k, f} |_\Sigma), \qquad a = \langle \nabla f, \dot \gamma(s) \rangle    
\end{align*}
and combining it with \eqref{eq:2.58}, we obtain
\begin{align*}
    &\kern-2em \tr( \dot \tU_{s; k, f}  |_\Sigma ) \\
    &\leq - \frac{1}{k} \bigl(   \tr ( \tU_{s; k, f}  |_\Sigma   ) +  \langle \nabla f, \dot \gamma(s) \rangle   \bigr)^2 - \Ric_{k, f}(\Sigma_s, \dot \gamma(s)) \\
    &\leq - \frac{ 1 }{N} \tr ( \tU_{s; k, f}  |_\Sigma   )^2 - \Ric_{k, f}(\Sigma_s, \dot \gamma(s)) + \frac{1}{N-k}\langle \nabla f, \dot \gamma(s) \rangle^2 \\
    &=   - \frac{ 1 }{N} \tr ( \tU_{s; k, f} |_\Sigma   )^2 - \Ric^N_{k, f}(\Sigma_s, \dot \gamma(s)).
\end{align*}
This finishes the proof. 
\end{proof}

In the remainder of this section, we introduce the classical
Boltzmann entropy functional and its weighted analogue. We also define
the corresponding Boltzmann entropy tensors, which will play a central
role in the subsequent analysis.

\begin{fdefi}[Boltzmann entropy functional]
    Let \(\mu\in\cPac(M)\) be absolutely continuous with respect to
\(\Vol_g\). The \emph{Boltzmann entropy functional} is defined by
    \begin{align*}
    \label{eq:2.60}
        H (\mu) \coloneqq   \int_M \log \Bigl(  \frac{d \mu}{d \Vol_g}     \Bigr) \, d \mu. \tag{2.60}
    \end{align*}
    Moreover, the \emph{weighted $k$-Boltzmann entropy functional} is defined by
\begin{align*}
\label{eq:2.61}
    H_{k, f}(\mu) \coloneqq H(\mu) + \frac{n}{k} \int_M f \, d \mu = \int_M \log \Bigl(  \frac{d \mu}{d \Vol_g}     \Bigr) \, d \mu + \frac{n}{k} \int_M f \, d \mu. \tag{2.61}
\end{align*}

\end{fdefi}

\hypertarget{D:2.10}{
 \begin{fdefi}[Boltzmann entropy tensor]
 Let $\mu_0, \mu_1\in \cPac(M)$, we define the \emph{Boltzmann entropy tensor} associated with $(\mu_0, \mu_1)$ by
\begin{align*}
\label{eq:2.62}
    \tH^{\mu_0 \rightarrow \mu_1}_t (x) \coloneqq - \int_0^t \tU(s)(x) ds, \qquad   x \in D(\mu_0, \mu_1), \tag{2.62}
\end{align*}
Furthermore, given $f\in C^\infty(M)$ and $k\in \{1, \cdots, n\}$, we define the \emph{weighted $k$-Boltzmann entropy tensor} associated with $(\mu_0, \mu_1)$ by
\begin{align*}
\label{eq:2.63}
    \tH^{\mu_0 \rightarrow \mu_1}_{t; k, f} (x) \coloneqq - \int_0^t \tU_{k, f}(s)(x) ds   , \qquad   x \in D(\mu_0, \mu_1). \tag{2.63}
\end{align*}
\end{fdefi}}

Note that by Definition~\hyperlink{D:2.10}{2.10} and \eqref{eq:2.51}, we have
\begin{align*}
\label{eq:2.64}
    \tH^{\mu_0 \rightarrow \mu_1}_{t; k, f} (x) &= \tH^{\mu_0 \rightarrow \mu_1}_{t} (x) + \int_0^t \frac{1}{k} \frac{d}{ds} f(\gamma(s)) \, ds \cdot \Id   \\
    &= \tH^{\mu_0 \rightarrow \mu_1}_{t} (x) + \frac{ 1 }{k}  \bigl( f(\gamma(t)) - f(\gamma(0)) \bigr)  \Id. \tag{2.64}
\end{align*}

\section{Main results}
\label{sec:3}
Throughout this section, we adopt the convention of writing
$|\bullet|_g$ simply as $|\bullet|$ and
$\tr_g(\bullet)$ simply as $\tr(\bullet)$.
Moreover, unless otherwise specified, we assume that $N>k$ whenever
$f$ is nonconstant. In the special case where $f$ is constant, we also
allow $N=k$, as usual.

\subsection{Equivalent conditions under smooth settings}
\label{sec:3.1}
The main goal of this section is to establish several equivalent
characterizations of the lower bound
$\Ric_{k,f}^N\ge K$, linking this geometric curvature condition with
Bochner-type inequalities and the convexity of the weighted
$k$-Boltzmann entropy tensor along optimal transport. One of our main results is the following.

\hypertarget{T:3.1}{
\begin{fthm}
Let $(M^n,g,e^{-f}d\Vol_g)$ be a smooth complete weighted Riemannian manifold without boundary.
Then the following conditions are equivalent.
\begin{enumerate}
\item
The $N$-Bakry--Émery intermediate $k$-Ricci curvature satisfies
\[
\Ric_{k,f}^N\ge K.
\]
\item
For every $\psi\in C_c^\infty(M)$,
every $x\in M$, and every $k$-dimensional subspace
$\Sigma\subset T_xM$,
\begin{align*}
\label{eq:3.1}
\tr\Bigl(
\widetilde{\Gamma}_{2,k}^N(\psi)(x)\big|_\Sigma
\Bigr)
\ge
K|\nabla\psi(x)|^2.  \tag{3.1} 
\end{align*}
\item
For every $\psi\in C_c^\infty(M)$,
every $x\in M$, and every $k$-dimensional subspace
$\Sigma\subset T_xM$,
\begin{align*}
\label{eq:3.2}
\tr\Bigl(
\widetilde{\Gamma}_{2,k}^N(\psi)(x)\big|_\Sigma
\Bigr)
\ge
\tr\Bigl(
\bigl((\nabla^2\psi)(x)\bigr)^2\big|_\Sigma
\Bigr)
+
K|\nabla\psi(x)|^2. \tag{3.2}
\end{align*}
\item
Let
$\mu_0,\mu_1\in\mathcal P_c^{\mathrm{ac}}(M)$,
and let
$F_t(x)=\exp_x\bigl(t\nabla\theta(x)\bigr)$
be the optimal transport interpolation satisfying
$\mu_1=(F_1)_\sharp\mu_0$.
For any
$x\in D(\mu_0,\mu_1)$,
every $t\in[0,1]$,
and every
$k$-dimensional subspace
$\Sigma\subset T_xM$,
we have
\begin{align*}
\label{eq:3.3}
\tr\Bigl(
\ddot{\tH}^{\mu_0\to\mu_1}_{t;k,f}(x)\big|_\Sigma
\Bigr)
\ge
K|\nabla\theta(x)|^2
+
\frac{1}{N-k}df(\dot\gamma(t))^2, \tag{3.3}
\end{align*}where $\gamma(t) \coloneqq \exp_x(t \nabla \theta(x))$.
\item
Under the same assumptions as in {\rm(4)},
\begin{align*}
\label{eq:3.4}
\tr\Bigl(
\ddot{\tH}^{\mu_0\to\mu_1}_{t;k,f}(x)\big|_\Sigma
\Bigr)
\ge
\frac1N
\Bigl[
\tr\Bigl(
\dot{\tH}^{\mu_0\to\mu_1}_{t;k,f}(x)\big|_\Sigma
\Bigr)
\Bigr]^2
+
K|\nabla\theta(x)|^2. \tag{3.4}
\end{align*}
\item
Under the same assumptions as in condition {\rm(4)}, the function
\begin{align*}
\label{eq:3.5}
&\kern-4em t\longmapsto
e^{-\frac1N
\tr\bigl(
\tH^{\mu_0\to\mu_1}_{t;k,f}(x)\big|_\Sigma
\bigr)} \\
&\kern2em +
\frac KN
|\nabla\theta(x)|^2
\int_0^t
(t-s)
e^{-\frac1N
\tr\bigl(
\tH^{\mu_0\to\mu_1}_{s;k,f}(x)\big|_\Sigma
\bigr)}
\,ds \tag{3.5}
\end{align*}
is concave on $[0,1]$.
\end{enumerate}
\end{fthm}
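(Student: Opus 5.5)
I will organize the proof as two loops: (1)$\Leftrightarrow$(2)$\Leftrightarrow$(3) on the Bochner side, and (1)$\Rightarrow$(4)$\Rightarrow$(5)$\Rightarrow$(6)$\Rightarrow$(1) on the transport side. Throughout, I read $\widetilde{\Gamma}^N_{2,k}$ as $\widetilde{\Gamma}^N_{2;k,f}$ from \eqref{eq:2.32}.

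\textbf{Bochner side.} By Lemma 2.1, \eqref{eq:2.35}, taking the trace over $\Sigma$ gives
\[
\tr\bigl(\widetilde{\Gamma}^N_{2;k,f}(\psi)|_\Sigma\bigr)=\tr\bigl((\nabla^2\psi)^2|_\Sigma\bigr)+\Ric^N_{k,f}(\Sigma,\nabla\psi).
\]
The factor $\frac1k\Id$ traces to $1$ on a $k$-plane, and $\tr(\Riem(\bullet,v)v|_\Sigma)=\Ric_k(\Sigma,v)$. Hence (1)$\Rightarrow$(3) is immediate, and (3)$\Rightarrow$(2) follows because $\tr((\nabla^2\psi)^2|_\Sigma)\ge 0$.

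For (2)$\Rightarrow$(1), fix $x$, $v$ and $\Sigma$. Choose $\psi\in C_c^\infty$ with $\nabla\psi(x)=v$ and $\nabla^2\psi(x)=0$; for example, take $\psi=\langle v,\exp_x^{-1}(\cdot)\rangle$ times a cutoff in normal coordinates. Then the Hessian term vanishes and (2) reduces to $\Ric^N_{k,f}(\Sigma,v)\ge K|v|^2$.

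\textbf{Transport side.} By \eqref{eq:2.63} we have $\dot{\tH}_{t;k,f}=-\tU_{t;k,f}$ and $\ddot{\tH}_{t;k,f}=-\dot{\tU}_{t;k,f}$, with traces taken on the parallel transport $\Sigma_t$. Also $|\dot\gamma(t)|=|\nabla\theta(x)|$.

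For (1)$\Rightarrow$(4), I take the trace of \eqref{eq:2.57} and drop $\tr(\tU_{t;k,f}^2|_\Sigma)\ge\frac1k(\tr\tU_{t;k,f}|_\Sigma)^2$. I then use only the crude bound
\[
\frac1k(m+a)^2\ge 0,
\]
where $m=\tr(\tU_{t;k,f}|_\Sigma)$ and $a=df(\dot\gamma)$. This gives
\[
\tr\ddot{\tH}\ge \Ric_{k,f}(\Sigma_t,\dot\gamma)=\Ric^N_{k,f}+\tfrac{1}{N-k}a^2\ge K|\nabla\theta|^2+\tfrac{a^2}{N-k}.
\]
For (1)$\Rightarrow$(5), I read it off directly from \eqref{eq:2.53}. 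However, (4)$\Rightarrow$(5) is not direct, so I will close the transport loop by showing each of (4), (5) and (6) implies (1).

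For (5)$\Leftrightarrow$(6), set $u(t)=e^{-\frac1N h(t)}$ with $h=\tr(\tH_{t;k,f}|_\Sigma)$. Then
\[
u''=-\tfrac1N u\bigl(h''-\tfrac1N(h')^2\bigr).
\]
The second derivative of $\frac KN|\nabla\theta|^2\int_0^t(t-s)u(s)\,ds$ is $\frac KN|\nabla\theta|^2u(t)$. So concavity of \eqref{eq:3.5} is exactly $h''\ge\frac1N(h')^2+K|\nabla\theta|^2$ a.e., which is (5), using $u>0$.

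\textbf{Converse (main obstacle).} Showing that (4), (5) or (6) implies (1) requires realizing arbitrary data by an optimal transport at $t=0$. Fix $x$, $v$, $\Sigma$, and suppose $\Ric^N_{k,f}(\Sigma,v)<K|v|^2$. Following Cordero-Erausquin--McCann--Schmuckenschläger, take $\theta$ smooth near $x$ with $\nabla\theta(x)=\epsilon v$ and with $\Hess\theta(x)$ prescribed. Scaling $\theta$ small makes $-\theta$ $\frac{d^2}{2}$-concave, by the Villani criterion that small $C^2$ functions are $c$-concave. Then let $\mu_0$ be uniform on a small ball around $x$ and $\mu_1=(F_1)_\sharp\mu_0$; by continuity the inequality can be tested at $x$.

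At $t=0$ we have $\tU_0=\Hess\theta(x)$, and \eqref{eq:2.57} gives
\[
\tr\ddot{\tH}_{0;k,f}|_\Sigma=\tr(\tU_{0;k,f}^2|_\Sigma)+\tfrac2k a\,m+\Ric_k+\Hess f+\tfrac1k a^2 .
\]
I will choose $\Hess\theta(x)$ so that $\tU_{0;k,f}|_\Sigma=\lambda\Id_\Sigma$ with $\lambda$ chosen to make the $f$-terms collapse. For (5), I need $m=k\lambda$ with the equality case of \eqref{eq:2.59}, namely $m=-\frac{N}{N-k}a$ up to the $\tfrac1k(m+a)^2$ split. For (4), I instead take $m=-a$. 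In either case the inequality reduces to $\Ric^N_{k,f}(\Sigma,\epsilon v)\ge K\epsilon^2|v|^2$, a contradiction.

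The delicate points are verifying this equality-case algebra and checking that the prescribed $\Hess\theta$ also controls the off-$\Sigma$ block. The latter only needs $\tU_0|_\Sigma$ restricted, which holds since I can take $\Hess\theta(x)$ to be block-scalar on $\Sigma$ and $\Sigma^\perp$.
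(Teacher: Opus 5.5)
Your proposal is correct and follows essentially the paper's proof. The Bochner equivalences come from the traced identity $\tr(\widetilde{\Gamma}^N_{2;k,f}(\psi)|_\Sigma)=\tr((\nabla^2\psi)^2|_\Sigma)+\Ric^N_{k,f}(\Sigma,\nabla\psi)$ together with a jet satisfying $\nabla^2\psi(x)=0$. The forward implications come from the traced weighted Riccati identity and \eqref{eq:2.53}, and (5)$\Leftrightarrow$(6) follows by differentiating twice. The converses test at $t=0$ with a small smooth $\frac{d^2}{2}$-concave potential whose Hessian at $x$ vanishes on $\Sigma$ for (4) and equals $-\frac{df(v)}{N-k}$ on $\Sigma$ for (5); these are exactly the paper's choices, and your equality case of \eqref{eq:2.59} explains the second one. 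Your route to (1)$\Rightarrow$(4) via $\frac1k(m+a)^2\ge 0$ amounts to the paper's discarding of $\tr(\tU_t^2|_\Sigma)\ge 0$, since $m+a=\tr(\tU_t|_\Sigma)$.
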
}


\begin{proof}
Clearly, $(3)\Rightarrow(2)$.
To prove $(1)\Rightarrow(3)$, recall the definition of the tensorial weighted $N$-Bakry--Émery operator \eqref{eq:2.32}, we have
\begin{align*}
      \widetilde{\Gamma}_{2,k}^N(\psi) =
    (\nabla^2\psi)^2
    +\Riem(\bullet,\nabla\psi)\nabla\psi
    +\frac1k \Bigl[  \Hess f(\nabla\psi,\nabla\psi) 
    -\frac1{N-k}df(\nabla\psi)^2 \Bigr] \Id.
\end{align*}
If $\nabla\psi(x)=0$, then
$\widetilde{\Gamma}_{2,k}^N(\psi)(x)
=
(\nabla^2\psi)^2(x)$,
and therefore \eqref{eq:3.2} follows immediately.

Now suppose that $\nabla\psi(x)\neq0$.
Let $\Sigma\subset T_xM$ be a $k$-dimensional subspace and let
$\{e_1,\dots,e_k\}$ be an orthonormal basis of $\Sigma$.
Then, at $x \in M$, we have
\begin{align*}
       \tr\Bigl(
\widetilde{\Gamma}_{2,k}^N(\psi)(x)\big|_\Sigma
\Bigr) 
         &= \tr \Bigl( (\nabla^2 \psi)^2(x) \big|_\Sigma \Bigr) + \Ric_{k, f}^N( \Sigma, \nabla \psi(x)). 
\end{align*}By the assumption in condition $(1)$, condition $(3)$ follows. 

\medskip

Next, we prove $(2)\Rightarrow(1)$ by contradiction.
Suppose that there exist
$x\in M$, a nonzero vector $v\in T_xM$, and a $k$-dimensional subspace
$\Sigma\subset T_xM$ such that
\[
\Ric_{k,f}^N(\Sigma,v)
<
K|v|^2.
\]
By the assumption in condition (2), we get
\begin{align*}
\label{eq:3.6}
\tr\Bigl(
\widetilde{\Gamma}_{2,k}^N(\psi)(x)\big|_\Sigma
\Bigr)
\ge
K|\nabla\psi(x)|^2   \tag{3.6}
\end{align*}
for every $\psi\in C_c^\infty(M)$.
By inequality \eqref{eq:3.6} and the definition of $\widetilde{\Gamma}_{2,k}^N$, we have
\begin{align*}
\label{eq:3.7}
\tr \Bigl( (\nabla^2 \psi)^2(x) \big|_\Sigma \Bigr)
+
\Ric_{k,f}^N(\Sigma,\nabla\psi(x))
\ge
K|\nabla\psi(x)|^2.   \tag{3.7}
\end{align*}
Now, by the standard jet construction, we may choose
$\psi\in C_c^\infty(M)$ such that
$\nabla\psi(x)=v$
and $\nabla^2\psi(x)=0$.
Hence, by choosing this $\psi$ and plugging this choice into inequality \eqref{eq:3.7}, we obtain 
\[
\Ric_{k,f}^N(\Sigma,v)
\ge
K|v|^2,
\]
which contradicts the assumption.
Therefore, the following conditions are equivalent:
\[
(1)\Longleftrightarrow(2)\Longleftrightarrow(3).
\]

\medskip

Next, we prove the equivalence of $(1)$ and $(4)$. To prove $(1)\Rightarrow(4)$, by equation \eqref{eq:2.56}, we have 
\begin{align*}
\label{eq:3.8}
&\kern-2em \tr\Bigl(
\ddot{\tH}^{\mu_0\to\mu_1}_{t;k,f}(x)\big|_\Sigma
\Bigr) \\
&=
    -
    \tr\bigl(
    \dot{\tU}_{t; k,f}(x) \big|_\Sigma
    \bigr) = \tr \bigl(  \tU_t^2(x) \big|_\Sigma   \bigr) + \Ric_k(\Sigma_t, \dot \gamma (t)) + \Hess f (\dot \gamma (t), \dot \gamma(t)) \\
    &= \tr \bigl(  \tU_t^2(x) \big|_\Sigma   \bigr) + \Ric^N_{k, f}(\Sigma_t, \dot \gamma(t) ) + \frac{1}{N-k} df(\dot \gamma(t))^2. \tag{3.8} 
\end{align*}where $\Sigma_t$ denotes the parallel transport of $\Sigma$ along $\gamma$.
By the assumption in condition $(1)$ and equation \eqref{eq:3.8}, we obtain 
\begin{align*}
&\kern-2em \tr\Bigl(
\ddot{\tH}^{\mu_0\to\mu_1}_{t;k,f}(x)\big|_\Sigma
\Bigr) \\
    &= \tr \bigl(  \tU_t^2(x) \big|_\Sigma   \bigr) + \Ric^N_{k, f}(\Sigma_t, \dot \gamma(t)) + \frac{1}{N-k}df(\dot \gamma(t))^2 \\
    &\geq K |\dot \gamma (t)|^2 + \frac{1}{N-k} df(\dot \gamma(t))^2 = K|\nabla\theta(x)|^2 + \frac{1}{N-k}df(\dot \gamma(t))^2.
\end{align*}Here, the last equality follows from the fact that $\gamma$ is a geodesic, and hence has constant speed. Therefore, $|\dot\gamma(t)|=|\nabla\theta(x)|$ for any $t \in [0, 1]$. Thus, condition $(4)$ follows.

To prove $(4)\Rightarrow(1)$, we again argue by contradiction.
Suppose that there exist
$x\in M$, a nonzero vector
$v\in T_xM$, and a $k$-dimensional subspace
$\Sigma\subset T_xM$ such that
\[
\Ric_{k,f}^N(\Sigma,v)
<
K|v|^2.
\]

Again, by the standard jet construction, we may choose
$\theta\in C_c^\infty(M)$ such that
$\nabla\theta(x)=v$ and 
$ \nabla^2\theta(x)= 0$.
After scaling $\theta$ (and $v$), we may further assume that
$-\theta$ is $\frac{d^2}{2}$-concave by
\cite[Theorem~13.5]{villani2009}.

Let
\[
F_t(y)
=
\exp_y(t\nabla\theta(y)),
\qquad
\mu_t=(F_t)_\sharp\mu_0,
\]
where
$\mu_0\in\mathcal P_c^{\mathrm{ac}}(M)$
is chosen so that
$x\in\supp\mu_0$.
Since $\theta$ is smooth, we have
$D(\mu_0,\mu_1)
=
\supp\mu_0$. Let $\Sigma_t$ denote the parallel transport of $\Sigma$
along the geodesic 
$\gamma(t)=\exp_x(tv)$, at $t = 0$, the weighted Riccati equation \eqref{eq:3.8} gives
\begin{align*}
\label{eq:3.9}
&\kern-2em \tr\Bigl(
\ddot{\tH}^{\mu_0\to\mu_1}_{0;k,f}(x)\big|_\Sigma
\Bigr) \\
    &= \tr \bigl(  \tU_0^2(x) \big|_\Sigma   \bigr) + \Ric^N_{k, f}(\Sigma_0, \dot \gamma(0) ) + \frac{1}{N-k} df(\dot \gamma(0))^2 \\
    &= \tr \Bigl(  (\nabla^2 \theta)^2(x) \big|_\Sigma   \Bigr)(x) + \Ric^N_{k, f}(\Sigma, v) + \frac{1}{N-k}df(v)^2 \\
    &=   \Ric^N_{k, f}(\Sigma, v) + \frac{1}{N-k}df(v)^2. \tag{3.9}
\end{align*}
On the other hand, by the assumption in condition $(4)$, we have
\begin{align*}
\label{eq:3.10}
&\kern-2em \tr\Bigl(
\ddot{\tH}^{\mu_0\to\mu_1}_{0;k,f}(x)\big|_\Sigma
\Bigr) \\
&\geq K|\nabla\theta(x)|^2
+
\frac{1}{N-k}df(\dot\gamma(0))^2 = K |v|^2 + \frac{1}{N-k}df(v)^2. \tag{3.10}
\end{align*}Equation \eqref{eq:3.9} and inequality \eqref{eq:3.10} imply that $\Ric^N_{k, f}(\Sigma, v) \geq K |v|^2$, which leads to a contradiction. Therefore, the following conditions are equivalent:
\[
(1)\Longleftrightarrow(4).
\]

Next, we prove the equivalence of $(1)$ and $(5)$.
To prove $(1)\Rightarrow(5)$, by taking the trace of equation \eqref{eq:3.9} over $\Sigma$ and using inequality \eqref{eq:2.53}, we obtain
\begin{align*}
\label{eq:3.11}
    &\kern-2em \tr\Bigl(
\ddot{\tH}^{\mu_0\to\mu_1}_{t;k,f}(x)\big|_\Sigma
\Bigr)  \\ 
    &=
    -
    \tr\Bigl(
    \dot{\tU}_{t; k,f}(x)\big|_\Sigma
    \Bigr) \ge
    \frac1N
    \Bigl[
    \tr\Bigl(
    \tU_{t; k,f}(x)\big|_\Sigma
    \Bigr)
    \Bigr]^2
    +
    \Ric_{k,f}^N(\Sigma_t,\dot\gamma(t))\\
    &=
    \frac1N
    \Bigl[
    \tr\Bigl(
    \dot{\mathbf H}^{\mu_0\to\mu_1}_{t;k,f}(x)
    \big|_\Sigma
    \Bigr)
    \Bigr]^2
    +
    \Ric_{k,f}^N(\Sigma_t,\dot\gamma(t)). \tag{3.11}
\end{align*}
By the assumption in condition $(1)$ and the fact that $\gamma$ is a geodesic, inequality \eqref{eq:3.11} becomes 
\[
\tr\Bigl(
\ddot{\tH}^{\mu_0\to\mu_1}_{t;k,f}(x)\big|_\Sigma
\Bigr)
\ge
\frac1N
    \Bigl[
    \tr\Bigl(
    \dot{\mathbf H}^{\mu_0\to\mu_1}_{t;k,f}(x)
    \big|_\Sigma
    \Bigr)
    \Bigr]^2
    +
K|\nabla\theta(x)|^2.
\] Thus, condition $(5)$ follows.

\medskip

To prove $(5)\Rightarrow(1)$, we again argue by contradiction.
Suppose that there exist
$x\in M$, a nonzero vector
$v\in T_xM$, and a $k$-dimensional subspace
$\Sigma\subset T_xM$ such that
\[
\Ric_{k,f}^N(\Sigma,v)
<
K|v|^2.
\]

Again, by the standard jet construction, we may choose
$\theta\in C_c^\infty(M)$ such that
$\nabla\theta(x)=v$ and 
$ 
\nabla^2\theta(x) 
=
-\frac{df(v)}{N-k} \Id$.
After scaling $\theta$ (and $v$), we may further assume that
$-\theta$ is $\frac{d^2}{2}$-concave by
\cite[Theorem~13.5]{villani2009}.

Let
\[
F_t(y)
=
\exp_y(t\nabla\theta(y)),
\qquad
\mu_t=(F_t)_\sharp\mu_0,
\]
where
$\mu_0\in\mathcal P_c^{\mathrm{ac}}(M)$
is chosen so that
$x\in\supp\mu_0$.
Since $\theta$ is smooth, we have
$D(\mu_0,\mu_1)
=
\supp\mu_0$.
 Let $\Sigma_t$ denote the parallel transport of $\Sigma$
along the geodesic 
$\gamma(t)=\exp_x(tv)$, at $t = 0$, the weighted Riccati equation \eqref{eq:3.8} gives
\begin{align*}
\label{eq:3.12}
&\kern-2em \tr\Bigl(
\ddot{\tH}^{\mu_0\to\mu_1}_{0;k,f}(x)\big|_\Sigma
\Bigr) \\
    &= \tr \bigl(  \tU_0^2(x) \big|_\Sigma   \bigr) + \Ric^N_{k, f}(\Sigma_0, \dot \gamma(0) ) + \frac{1}{N-k} df(\dot \gamma(0))^2 \\
    &= \tr \Bigl(  (\nabla^2 \theta)^2(x) \big|_\Sigma   \Bigr)(x) + \Ric^N_{k, f}(\Sigma, v) + \frac{1}{N-k}df(v)^2 \\
    &=   \Ric^N_{k, f}(\Sigma, v) + \frac{N}{(N-k)^2}df(v)^2.  \tag{3.12}
\end{align*}
On the other hand, by the assumption in condition $(5)$, we have
\begin{align*}
\label{eq:3.13}
&\kern-2em \tr\Bigl(
\ddot{\tH}^{\mu_0\to\mu_1}_{0;k,f}(x)\big|_\Sigma
\Bigr) \\
&\geq  
\frac1N
\Bigl[
\tr\Bigl(
\dot{\tH}^{\mu_0\to\mu_1}_{0;k,f}(x)\big|_\Sigma
\Bigr)
\Bigr]^2
+
K|\nabla\theta(x)|^2 =    \frac1N
\Bigl[ -
\tr\Bigl(
{\tU}_{0;k,f}(x)\big|_\Sigma
\Bigr)
\Bigr]^2
+
K|v|^2    \\
&= 
\frac1N
\Bigl[
\tr\Bigl(
\nabla^2\theta(x)\big|_\Sigma
\Bigr)
-
df(v)
\Bigr]^2
+
K|v|^2 = \frac{N}{(N-k)^2}df(v)^2 + K|v|^2. \tag{3.13}
\end{align*}

Equation \eqref{eq:3.12} and inequality \eqref{eq:3.13} imply that $\Ric^N_{k, f}(\Sigma, v) \geq K |v|^2$, which leads to a contradiction. Therefore, the following conditions are equivalent:
\[
(1) \Longleftrightarrow(5).
\]

\medskip

Finally, we prove the equivalence of $(5)$ and $(6)$.
By differentiating function \eqref{eq:3.5} twice with respect to $t$, we obtain
\begin{align*}
&\kern-2em \frac{d^2}{dt^2}
\biggl[
e^{-\frac1N
\tr\bigl(
\tH^{\mu_0\to\mu_1}_{t;k,f}(x)\big|_\Sigma
\bigr)}
+
\frac KN
|\nabla\theta(x)|^2
\int_0^t
(t-s)
e^{-\frac1N
\tr\bigl(
\tH^{\mu_0\to\mu_1}_{s;k,f}(x)\big|_\Sigma
\bigr)}
\,ds
\biggr] \\
&=
-\frac{
e^{-\frac1N
\tr\bigl(
\tH^{\mu_0\to\mu_1}_{t;k,f}(x)\big|_\Sigma
\bigr)}
}{N}
\biggl[
\tr\Bigl(
\ddot{\tH}^{\mu_0\to\mu_1}_{t;k,f}(x)\big|_\Sigma
\Bigr) \\
&\kern14em -
\frac1N
\Bigl[
\tr\Bigl(
\dot{\tH}^{\mu_0\to\mu_1}_{t;k,f}(x)\big|_\Sigma
\Bigr)
\Bigr]^2
-
K|\nabla\theta(x)|^2
\biggr].
\end{align*}
Therefore, the following conditions are equivalent:
\[
(5)\Longleftrightarrow(6).
\]
This finishes the proof.
\end{proof}

By Lemma~\hyperlink{L:2.6}{2.6}, the equivalent
characterizations in Theorem~\hyperlink{T:3.1}{3.1} admit the following
reformulation in terms of the induced endomorphisms on
\(\bigwedge^k TM\), which provides a more natural framework for the subsequent developments.

\hypertarget{C:3.1}{
\begin{fcor}
Let $(M^n,g,e^{-f}d\Vol_g)$ be a smooth complete weighted Riemannian manifold without boundary.
Then the following conditions are equivalent.
\begin{enumerate}
\item
The $N$-Bakry--Émery intermediate $k$-Ricci curvature satisfies
\[
\Ric_{k,f}^N\ge K.
\]
\item
For every $\psi\in C_c^\infty(M)$ and
every $x\in M$,
\begin{align*}
\label{eq:3.14}
\Bigl(
\widetilde{\Gamma}_{2;k,f}^N(\psi)(x)
\Bigr)^{[k]}
\ge
K|\nabla\psi(x)|^2\Id.  \tag{3.14} 
\end{align*}
\item
For every $\psi\in C_c^\infty(M)$ and
every $x\in M$,
\begin{align*}
\label{eq:3.15}
\Bigl(
\widetilde{\Gamma}_{2;k,f}^N(\psi)(x)
\Bigr)^{[k]}
\ge
\Bigl(
(\nabla^2\psi)^2(x)
\Bigr)^{[k]}
+
K|\nabla\psi(x)|^2\Id.  \tag{3.15} 
\end{align*}
\item
Let
$\mu_0,\mu_1\in\mathcal P_c^{\mathrm{ac}}(M)$,
and let
$F_t(x)=\exp_x(t\nabla\theta(x))$
be the optimal transport interpolation satisfying
$\mu_1=(F_1)_\sharp\mu_0$.
For any
$x\in D(\mu_0,\mu_1)$ and every $t\in[0,1]$, we have
\begin{align*}
\label{eq:3.16}
\Bigl(
\ddot{\tH}^{\mu_0\to\mu_1}_{t;k,f}(x)
\Bigr)^{[k]}
\ge
\Bigl(
K|\nabla\theta(x)|^2
+
\frac{1}{N-k}df(\dot\gamma(t))^2
\Bigr)\Id. \tag{3.16}
\end{align*}
\item
Under the same assumptions as in {\rm(4)}, for every decomposable unit $k$-vector
$\xi\in\bigwedge^k T_xM$, we have
\begin{align*}
\label{eq:3.17}
\Bigl\langle
\Bigl(
\ddot{\tH}^{\mu_0\to\mu_1}_{t;k,f}(x)
\Bigr)^{[k]}
\xi,\xi
\Bigr\rangle
\ge
\frac1N
\Bigl\langle
\Bigl(
\dot{\tH}^{\mu_0\to\mu_1}_{t;k,f}(x)
\Bigr)^{[k]}
\xi,\xi
\Bigr\rangle^2
+
K|\nabla\theta(x)|^2. \tag{3.17}
\end{align*}
\item
Under the same assumptions as in {\rm(4)}, for every decomposable unit $k$-vector
$\xi\in\bigwedge^k T_xM$, the function
\begin{align*}
\label{eq:3.18}
&\kern-4em t\longmapsto
 e^{-\frac1N
\bigl\langle
\bigl(
{\tH}^{\mu_0\to\mu_1}_{t;k,f}(x)
\bigr)^{[k]}\xi,\xi
\bigr\rangle}
\\
&+
\frac KN
|\nabla\theta(x)|^2
\int_0^t
(t-s)
e^{-\frac1N
\bigl\langle
\bigl(
{\tH}^{\mu_0\to\mu_1}_{s;k,f}(x)
\bigr)^{[k]}\xi,\xi
\bigr\rangle}
\,ds \tag{3.18}
\end{align*}
is concave on $[0,1]$.
\end{enumerate}
\end{fcor}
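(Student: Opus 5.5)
The corollary follows from Theorem~\hyperlink{T:3.1}{3.1} by applying Lemma~\hyperlink{L:2.6}{2.6} pointwise. The plan is to show that each condition $(j)$ of the corollary is equivalent to the corresponding condition $(j)$ of the theorem, for $j=1,\dots,6$. Condition $(1)$ is literally the same in both statements.

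\textbf{Conditions (2)--(4).} Fix $x$ (and $\psi$, or $\mu_0,\mu_1,t$), and regard the relevant self-adjoint operator on $V=T_xM$.
\begin{itemize}
\item[(2)] Take $A=\widetilde{\Gamma}_{2;k,f}^N(\psi)(x)$ with constant $K|\nabla\psi(x)|^2$. Lemma~\hyperlink{L:2.6}{2.6} (equivalence of its items (1) and (3)) gives that $\tr(A|_\Sigma)\ge K|\nabla\psi(x)|^2$ for all $k$-planes $\Sigma$ if and only if $A^{[k]}\ge K|\nabla\psi(x)|^2\Id$.
\item[(3)] Apply the lemma to $A=\widetilde{\Gamma}_{2;k,f}^N(\psi)(x)-(\nabla^2\psi)^2(x)$, using linearity of $A\mapsto A^{[k]}$ (immediate from Definition~\hyperlink{D:2.4}{2.4}) and linearity of $\tr(\cdot|_\Sigma)$.
\item[(4)] Take $A=\ddot{\tH}_{t;k,f}(x)$ and the constant $K|\nabla\theta(x)|^2+\frac{1}{N-k}df(\dot\gamma(t))^2$.
\end{itemize}
One point needs checking first: $\ddot{\tH}$ is self-adjoint. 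This holds because $\tU_t$ is symmetric, hence so is $\dot{\tU}_t$, and the weighted correction is a multiple of $\Id$.

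\textbf{Conditions (5)--(6).} These use the identity $\langle A^{[k]}\xi,\xi\rangle=\tr(A|_\Sigma)$ for $\xi=u_1\wedge\cdots\wedge u_k$, where $\{u_i\}$ is an orthonormal basis of $\Sigma$. This identity was established inside the proof of Lemma~\hyperlink{L:2.6}{2.6}. Every $k$-dimensional $\Sigma$ yields such a $\xi$, and every decomposable unit $k$-vector spans some $k$-plane $\Sigma$. So quantifying over $\Sigma$ is the same as quantifying over decomposable unit $\xi$, and this step is purely a substitution.
\begin{itemize}
\item[(5)] Substitute the identity termwise with $A=\ddot{\tH}_{t;k,f}(x)$ and $A=\dot{\tH}_{t;k,f}(x)$. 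Inequality \eqref{eq:3.4} then becomes \eqref{eq:3.17} exactly.
\item[(6)] Substitute with $A=\tH_{s;k,f}(x)$ for every $s$. The function in \eqref{eq:3.5} becomes the function in \eqref{eq:3.18}.
\end{itemize}

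\textbf{Main subtlety.} Conditions (5) and (6) are quadratic or exponential in the trace, so they cannot be phrased as an operator inequality $A^{[k]}\ge c\,\Id$. For example, $\langle A^{[k]}\xi,\xi\rangle^2$ is not controlled by positivity of $A^{[k]}$ on general $\xi$. This is precisely why those items are stated only for decomposable $\xi$, and the proof must use items (1)$\Leftrightarrow$(2) of Lemma~\hyperlink{L:2.6}{2.6} there rather than (1)$\Leftrightarrow$(3). The statement already handles this restriction, so no genuine obstacle remains beyond recording the identity carefully.

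Combining these pointwise equivalences with Theorem~\hyperlink{T:3.1}{3.1} shows that all six conditions of the corollary are equivalent.
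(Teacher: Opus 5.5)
Your proposal is correct and matches the paper's proof. Conditions (1)--(5) are reduced to Theorem~3.1 by applying Lemma~2.6 pointwise: items (1)$\Leftrightarrow$(3) for the operator inequalities, and the identity $\langle A^{[k]}\xi,\xi\rangle=\tr(A|_\Sigma)$ for decomposable $\xi$. The only cosmetic difference is in (6): the paper re-differentiates (3.18) twice to get (5)$\Leftrightarrow$(6) directly, whereas you identify (3.18) with (3.5) through that same identity and invoke Theorem~3.1(6); both work.
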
}

\begin{proof}
    First, by Lemma~\hyperlink{L:2.6}{2.6} and Theorem~\hyperlink{T:3.1}{3.1}, we immediately see that the following conditions are equivalent: 
    \begin{align*}
       (1)\Longleftrightarrow(2)\Longleftrightarrow(3)\Longleftrightarrow(4)\Longleftrightarrow(5).
    \end{align*}

    Finally, we prove the equivalence of $(5)$ and $(6)$.
By differentiating function \eqref{eq:3.18} twice with respect to $t$, we obtain
\begin{align*}
&\kern-2em \frac{d^2}{dt^2}
\biggl[
e^{-\frac1N
\bigl\langle
\bigl(
{\tH}^{\mu_0\to\mu_1}_{t;k,f}(x)
\bigr)^{[k]}\xi,\xi
\bigr\rangle}
+
\frac KN
|\nabla\theta(x)|^2
\int_0^t
(t-s)
e^{-\frac1N
\bigl\langle
\bigl(
{\tH}^{\mu_0\to\mu_1}_{s;k,f}(x)
\bigr)^{[k]}\xi,\xi
\bigr\rangle}
\,ds
\biggr] \\
&=
-\frac{
e^{-\frac1N
\bigl\langle
\bigl(
{\tH}^{\mu_0\to\mu_1}_{t;k,f}(x)
\bigr)^{[k]}\xi,\xi
\bigr\rangle}
}{N}
\biggl[
\Bigl\langle
\Bigl(
\ddot{\tH}^{\mu_0\to\mu_1}_{t;k,f}(x)
\Bigr)^{[k]}
\xi,\xi
\Bigr\rangle
 \\
&\kern15em-
\frac1N
\Bigl\langle
\Bigl(
\dot{\tH}^{\mu_0\to\mu_1}_{t;k,f}(x)
\Bigr)^{[k]}
\xi,\xi
\Bigr\rangle^2 
- K|\nabla\theta(x)|^2
\biggr].
\end{align*}
Therefore, the following conditions are equivalent:
\[
(5)\Longleftrightarrow(6).
\]
This finishes the proof.
\end{proof}


\begin{frmk}
    In the unweighted case, that is, when $f$ is constant, the constant $N$ in both Theorem~\hyperlink{T:3.1}{3.1} and Corollary~\hyperlink{C:3.1}{3.1} can be replaced by $k$. More precisely, $\Ric_{k, f}^N$, $\widetilde{\Gamma}^N_{2; k, f}$, and $\tH_{t; k, f}$ can be replaced by $\Ric_k$, $\widetilde{\Gamma}_2$ and $\tH_t$, respectively.
\end{frmk}

\begin{frmk}
One advantage of the tensorial Bakry--Émery operator is that its lower bound
admits a natural reformulation as an operator inequality on the exterior power
$\bigwedge^k TM$. Indeed, by Lemma~\hyperlink{L:2.6}{2.6}, the inequality
\[
\tr\Bigl(
\widetilde{\Gamma}_{2,k}^N(\psi)(x)\big|_\Sigma
\Bigr)
\ge
\tr\Bigl(
\bigl((\nabla^2\psi)(x)\bigr)^2\big|_\Sigma
\Bigr)
+
K|\nabla\psi(x)|^2
\]
for every $x \in M$ and every $k$-dimensional subspace $\Sigma\subset T_xM$ is equivalent to
\[
\Bigl(
\widetilde{\Gamma}_{2;k,f}^N(\psi)
\Bigr)^{[k]}
\ge
\Bigl(
(\nabla^2\psi)^2
\Bigr)^{[k]}
+
K|\nabla\psi|^2\Id
\]
as an operator on $\bigwedge^k TM$.

In contrast, the second-order differential inequality satisfied by the tensorial entropy
operator contains the nonlinear term
\[
\Bigl[
\tr\Bigl(
\dot{\tH}^{\mu_0\to\mu_1}_{t;k,f}(x)\big|_\Sigma
\Bigr)
\Bigr]^2,
\]
which, under the exterior-power formulation, becomes
\[
\Bigl\langle
\Bigl(
\dot{\tH}^{\mu_0\to\mu_1}_{t;k,f}(x)
\Bigr)^{[k]}
\xi,\xi
\Bigr\rangle^2.
\]
Since this expression is quadratic rather than linear in
$\bigl(\dot{\tH}^{\mu_0\to\mu_1}_{t;k,f}(x)\bigr)^{[k]}$, it cannot be
reformulated as a linear operator inequality on $\bigwedge^k TM$. This
highlights a fundamental distinction between the tensorial Bakry--Émery operator
and the tensorial entropy operator, and illustrates that the former is particularly well
suited to the language of exterior powers.
\end{frmk}

\medskip

\subsection{Displacement convexity of the weighted Boltzmann entropy functional}
In this section, we apply Corollary~\hyperlink{C:3.1}{3.1} to prove that the weighted $k$-Boltzmann entropy
functional $H_{k,f}$ is displacement convex and derive an evolution variational inequality, which in turn yields quantitative estimates for the evolution of the
Wasserstein distance under the heat semigroup. 
In the unweighted setting, our results recover the corresponding
classical results of von Renesse--Sturm
\cite{von_renesse_transport_2005}.

\hypertarget{C:3.2}{
\begin{fcor}
Let
$\mu_0,\mu_1\in\mathcal P_c^{\mathrm{ac}}(M)$,
and let
$F_t(x)=\exp_x(t\nabla\theta(x))$
be the optimal transport interpolation satisfying
$\mu_1=(F_1)_\sharp\mu_0$.
Then, for every
$x\in D(\mu_0,\mu_1)$ and every $t\in[0,1]$, we have
\begin{align*}
\label{eq:3.19}
&\kern-2em
\tr_{\bigwedge^kT_xM}
\Bigl(
\bigl(
{\tH}^{\mu_0\to\mu_1}_{t;k,f}(x)
\bigr)^{[k]}
\Bigr)
\\
&\leq
-\binom{n}{k}N
\log\biggl[
\sigma^{(1-t)}_{\frac{K}{N}|\nabla\theta(x)|^2}
\\
&\kern8em
+
\sigma^{(t)}_{\frac{K}{N}|\nabla\theta(x)|^2}
\exp\biggl(
-\frac{
\tr_{\bigwedge^kT_xM}
\Bigl(
\bigl(
{\tH}^{\mu_0\to\mu_1}_{1;k,f}(x)
\bigr)^{[k]}
\Bigr)
}{
\binom{n}{k}N
}
\biggr)
\biggr]. \tag{3.19}
\end{align*}
where the distortion coefficient
$\sigma_{\bullet}^{(\bullet)}$
is defined in \eqref{eq:2.39}.
Moreover, for every
$x\in D(\mu_0,\mu_1)$ and every
$t\in[0,1]$,
\begin{align*}
\label{eq:3.20}
&\kern-2em
\tr_{\bigwedge^kT_xM}
\Bigl(
\bigl(
{\tH}^{\mu_0\to\mu_1}_{t;k,f}(x)
\bigr)^{[k]}
\Bigr)
-
t
\tr_{\bigwedge^kT_xM}
\Bigl(
\bigl(
{\tH}^{\mu_0\to\mu_1}_{1;k,f}(x)
\bigr)^{[k]}
\Bigr)
\\
&\le
\begin{cases}
-\dfrac{\binom{n}{k}K}{2}
|\nabla\theta(x)|^2
t(1-t),
&
K>0,
\\[3mm]
0,
&
K=0,
\\[3mm]
3\binom{n}{k}N
t(1-t)
\log\Biggl(
\dfrac{
\sinh\Bigl(\sqrt{-\frac{K}{N}}\,|\nabla\theta(x)|\Bigr)
}{
\sqrt{-\frac{K}{N}}\,|\nabla\theta(x)|
}
\Biggr),
&
K<0.
\end{cases}
\tag{3.20}
\end{align*}
\end{fcor}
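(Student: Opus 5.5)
We work under the standing hypothesis $\Ric^N_{k,f}\ge K$. The idea is to reduce everything to the scalar ODE inequality of condition (5) in Theorem~\hyperlink{T:3.1}{3.1}, applied one coordinate $k$-plane at a time, and then sum over $k$-planes.

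\emph{Step 1: the scalar functions.} Fix $x\in D(\mu_0,\mu_1)$ and an orthonormal basis $\{e_1,\dots,e_n\}$ of $T_xM$. For each multi-index $I=\{i_1<\dots<i_k\}$ put
\[
\xi_I\coloneqq e_{i_1}\wedge\cdots\wedge e_{i_k},\qquad
h_I(t)\coloneqq\bigl\langle(\tH^{\mu_0\to\mu_1}_{t;k,f}(x))^{[k]}\xi_I,\xi_I\bigr\rangle=\tr\bigl(\tH^{\mu_0\to\mu_1}_{t;k,f}(x)\big|_{\Sigma_I}\bigr),
\]
where $\Sigma_I=\Span\{e_i: i\in I\}$. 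Since the $\xi_I$ form an orthonormal basis of $\bigwedge^kT_xM$, we have
\[
\tr_{\bigwedge^kT_xM}\bigl((\tH_{t;k,f})^{[k]}\bigr)=\sum_{|I|=k}h_I(t).
\]
The tensor $\tH_{t;k,f}$ is smooth in $t$, because $\tJ$ is smooth and invertible along the minimizing geodesic. Hence each $h_I$ is $C^2$ on $[0,1]$ and satisfies $h_I(0)=0$. By condition (5) of Theorem~\hyperlink{T:3.1}{3.1} (equivalently Corollary~\hyperlink{C:3.1}{3.1}(5)),
\[
h_I''\ge a(h_I')^2+b,\qquad a=\tfrac1N,\quad b=K|\nabla\theta(x)|^2 ,
\]
so that $ab=\frac KN|\nabla\theta(x)|^2$, which is exactly the subscript of $\sigma$ in \eqref{eq:3.19}.

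\emph{Step 2: the one-plane estimate.} Apply Lemma~\hyperlink{L:2.3}{2.3}(ii) (or part (i) when the relevant case degenerates) to each $h_I$. Using $h_I(0)=0$, this gives
\[
h_I(t)\le \phi_t\bigl(h_I(1)\bigr),\qquad \phi_t(y)\coloneqq -N\log\bigl(\sigma^{(1-t)}+\sigma^{(t)}e^{-y/N}\bigr).
\]

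\emph{Step 3: summing, for \eqref{eq:3.19}.} The function $\phi_t$ is concave in $y$. Indeed, writing $u=e^{-y/N}$, $A=\sigma^{(1-t)}\ge0$ and $B=\sigma^{(t)}\ge0$, one computes
\[
\phi_t'(y)=\frac{Bu}{A+Bu},\qquad \phi_t''(y)=-\frac{ABu}{N(A+Bu)^2}\le0 .
\]
Summing Step 2 over the $\binom nk$ multi-indices and applying Jensen's inequality to the concave $\phi_t$,
\[
\sum_I h_I(t)\le\sum_I\phi_t\bigl(h_I(1)\bigr)\le\binom nk\,\phi_t\Bigl(\binom nk^{-1}\sum_I h_I(1)\Bigr).
\]
This is precisely \eqref{eq:3.19}.

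\emph{Step 4: the explicit bounds \eqref{eq:3.20}.} Apply Proposition~\hyperlink{P:2.3}{2.3}(2) to each $h_I$, again with $a=1/N$, $b=K|\nabla\theta|^2$ and $h_I(0)=0$. This yields $h_I(t)-t\,h_I(1)\le$ the corresponding case expression:
\begin{itemize}
\item if $K>0$: $-\frac K2|\nabla\theta|^2t(1-t)$;
\item if $K=0$: $0$;
\item if $K<0$: $3N\,t(1-t)\log\bigl(\sinh(\sqrt{-K/N}|\nabla\theta|)/(\sqrt{-K/N}|\nabla\theta|)\bigr)$.
\end{itemize}
None of these depends on $I$, so summing over the $\binom nk$ multi-indices multiplies each case by $\binom nk$. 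This gives \eqref{eq:3.20}.

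\emph{Main obstacle.} Lemma~\hyperlink{L:2.3}{2.3}(ii) and Proposition~\hyperlink{P:2.3}{2.3} require $ab=\frac KN|\nabla\theta(x)|^2<\pi^2$ when $K>0$. I would derive this from the fact that $\gamma$ is minimizing with length $|\nabla\theta(x)|$ and $\tJ_t$ stays invertible on $[0,1]$. Concretely, take the trace of the Riccati inequality \eqref{eq:2.53} on a plane and compare with the model solution $\sqrt{NK}\cot$, a Bonnet--Myers/Sturm comparison. Since this comparison would otherwise blow up before time $1$, it forces $\sqrt{K/N}\,|\nabla\theta|\le\pi$. The borderline equality case can be handled by letting $t\to1$ or by a limiting argument. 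If this turns out to be delicate, I would state the corollary under that explicit assumption.
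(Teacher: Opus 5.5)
Your proof is correct, but it aggregates over $k$-planes in the opposite order from the paper.

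The paper first sums (3.17) over the basis $\{\xi_I\}$ of $\bigwedge^kT_xM$. Cauchy--Schwarz on the $\binom nk$ diagonal entries then gives a single scalar inequality $h''\ge\frac{1}{\binom nk N}(h')^2+\binom nk K|\nabla\theta(x)|^2$ for $h(t)=\tr_{\bigwedge^kT_xM}\bigl((\tH^{\mu_0\to\mu_1}_{t;k,f}(x))^{[k]}\bigr)$. It then applies Lemma 2.3(ii) and Proposition 2.3 once, with $a=1/(\binom nk N)$ and $b=\binom nk K|\nabla\theta(x)|^2$. Note that $ab$ is the same as yours, and (3.19)--(3.20) follow immediately.

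You instead run the one-dimensional comparison on each $h_I$ with $a=1/N$ and aggregate afterwards. For (3.19) you use concavity of $\phi_t$ and Jensen; for (3.20) you simply sum, since its right-hand sides do not depend on $I$.

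What each route buys:
\begin{itemize}
\item The paper's route is shorter: a single ODE argument with ``effective dimension'' $\binom nk N$.
\item Your route gives the stronger intermediate bound $\sum_I h_I(t)\le\sum_I\phi_t(h_I(1))$. This keeps per-plane information that the paper only exploits later, in $\sigma_k$ form, in Theorem 4.1.
\item Your route also avoids the trace-of-squares step in (3.21). As written, that step is only valid if the basis is chosen adapted to $\dot{\tH}^{\mu_0\to\mu_1}_{t;k,f}(x)$ at each $t$.
\item Since $a=1/N>0$, part (ii) of Lemma 2.3 covers every case, including $\nabla\theta(x)=0$, so part (i) is never needed.
\end{itemize}

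Your ``main obstacle'' is a real point that the paper's proof passes over silently. For $K>0$, both routes need $\frac KN|\nabla\theta(x)|^2<\pi^2$ to invoke Lemma 2.3(ii) and Proposition 2.3, and indeed for the coefficients $\sigma^{(t)}_{ab}$ to be defined and nonnegative.

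Your Riccati/Sturm fix works, and the borderline case needs no limiting argument. The function $u=e^{-h_I/N}$ is strictly positive on the closed interval $[0,1]$, because $\tJ$ is invertible there. It satisfies $u''+\omega^2u\le0$ with $\omega=\sqrt{K/N}\,|\nabla\theta(x)|$. Suppose $\omega\ge\pi$. Then the Wronskian $W=u'\phi-u\phi'$ with $\phi(t)=\sin(\omega t)$ on $[0,\pi/\omega]\subset[0,1]$ is nonincreasing, since $W'=(u''+\omega^2u)\phi\le0$. But $W(0)=-\omega u(0)<0<\omega u(\pi/\omega)=W(\pi/\omega)$, a contradiction. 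Hence $\omega<\pi$ strictly.
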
}

\begin{frmk}
When $K<0$ and $\nabla\theta(x)=0$, let $A(x) \coloneqq \sqrt{-\frac{K}{N}}\,|\nabla\theta(x)|$ we interpret
$\frac{\sinh (A(x))}{A(x)}$
by continuity, namely,
$\frac{\sinh (A(x) )}{A(x)}
\coloneqq 1$.
Equivalently, we have
\[
\log \Bigl( \frac{\sinh (A(x))}{A(x)} \Bigr)
\coloneqq 0.
\]
This agrees with the limiting value as $|\nabla\theta(x)|\to0$ and is consistent with the case $K=0$.
\end{frmk}

\begin{proof}
By taking the trace of inequality~\eqref{eq:3.17} over
\(\bigwedge^k T_xM\), we obtain
\begin{align*}
\label{eq:3.21}
&\kern-2em
\operatorname{tr}_{\bigwedge^k T_xM}
\Bigl(
\bigl(
\ddot{\tH}^{\mu_0\to\mu_1}_{t;k,f}(x)
\bigr)^{[k]}
\Bigr)
\\
&\geq
\frac{1}{N}
\operatorname{tr}_{\bigwedge^k T_xM}
\Biggl(
\Bigl(
\bigl(
\dot{\tH}^{\mu_0\to\mu_1}_{t;k,f}(x)
\bigr)^{[k]}
\Bigr)^2
\Biggr)
+
\binom{n}{k}K|\nabla\theta(x)|^2
\\
&\geq
\frac{1}{\binom{n}{k}N}
\Biggl[
\operatorname{tr}_{\bigwedge^k T_xM}
\Bigl(
\bigl(
\dot{\tH}^{\mu_0\to\mu_1}_{t;k,f}(x)
\bigr)^{[k]}
\Bigr)
\Biggr]^2
+
\binom{n}{k}K|\nabla\theta(x)|^2,
\tag{3.21}
\end{align*}
where the second inequality follows from
$\operatorname{tr}(A^2)
\geq
\frac{1}{\binom{n}{k}}
\bigl(\operatorname{tr}A\bigr)^2$
for every self-adjoint endomorphism of
\(\bigwedge^k T_xM\).

By applying Lemma~\hyperlink{L:2.3}{2.3} with
$a=\frac{1}{\binom{n}{k}N}$ and 
$b=\binom{n}{k}K|\nabla\theta(x)|^2$ to inequality \eqref{eq:3.21},
we obtain
\begin{align*}
&\kern-2em
\operatorname{tr}_{\bigwedge^k T_xM}
\Bigl(
\bigl(
\tH^{\mu_0\to\mu_1}_{t;k,f}(x)
\bigr)^{[k]}
\Bigr)
\\
&\leq
-\binom{n}{k}N
\log\Biggl[
\sigma^{(1-t)}_{\frac{K}{N}|\nabla\theta(x)|^2}
\exp\Biggl(
-\frac{
\operatorname{tr}_{\bigwedge^k T_xM}
\Bigl(
\bigl(
\tH^{\mu_0\to\mu_1}_{0;k,f}(x)
\bigr)^{[k]}
\Bigr)
}{
\binom{n}{k}N
}
\Biggr)
\\
&\kern11em
+
\sigma^{(t)}_{\frac{K}{N}|\nabla\theta(x)|^2}
\exp\Biggl(
-\frac{
\operatorname{tr}_{\bigwedge^k T_xM}
\Bigl(
\bigl(
\tH^{\mu_0\to\mu_1}_{1;k,f}(x)
\bigr)^{[k]}
\Bigr)
}{
\binom{n}{k}N
}
\Biggr)
\Biggr].
\end{align*}
Since
$\bigl(
\tH^{\mu_0\to\mu_1}_{0;k,f}(x)
\bigr)^{[k]}=0$,
the above inequality reduces to \eqref{eq:3.19}, that is,
\begin{align*}
&\kern-2em
\operatorname{tr}_{\bigwedge^k T_xM}
\Bigl(
\bigl(
\tH^{\mu_0\to\mu_1}_{t;k,f}(x)
\bigr)^{[k]}
\Bigr)
\\
&\leq
-\binom{n}{k}N
\log\Biggl[
\sigma^{(1-t)}_{\frac{K}{N}|\nabla\theta(x)|^2}
+
\sigma^{(t)}_{\frac{K}{N}|\nabla\theta(x)|^2}
\exp\Biggl(
-\frac{
\operatorname{tr}_{\bigwedge^k T_xM}
\Bigl(
\bigl(
\tH^{\mu_0\to\mu_1}_{1;k,f}(x)
\bigr)^{[k]}
\Bigr)
}{
\binom{n}{k}N
}
\Biggr)
\Biggr].
\end{align*}
Inequality \eqref{eq:3.20} follows from
Proposition~\hyperlink{P:2.3}{2.3}. This finishes the proof. 
\end{proof}

Before turning to the convexity results, we emphasize that the tensorial
entropy functional contains strictly more information than the classical
Boltzmann entropy functional. Indeed, the latter is recovered by taking the
trace of the former.

Since our goal is to study intermediate $k$-Ricci curvature, it is natural
to formulate the framework on the exterior power bundle $\bigwedge^k TM$. The
following trace formula relates the entropy tensor
to its induced endomorphism on $\bigwedge^k TM$. Consequently, the
weighted $k$-Boltzmann entropy functional can be recovered from the induced action of the
entropy tensor on exterior powers.

\begin{flemma}
    Let $\mu_0, \mu_1 \in \cPac(M)$, and let $(\mu_t)_{t\in [0, 1]}$ be the Wasserstein geodesic in $\cPac(M)$ between $\mu_0$ and $\mu_1$. Then, for any $t \in [0, 1]$, we have
    \begin{align*}
    \label{eq:3.22}
        H_{k, f}(\mu_t) &= H_{k, f}(\mu_0) +  \int_M \tr \bigl( \tH^{\mu_0 \rightarrow \mu_1}_{t; k, f}(x)  \bigr)  \, d \mu_0(x)   \\
        &= H_{k, f}(\mu_0) +  \frac{1}{\binom{n-1}{k-1}} \int_M \tr_{\bigwedge^kT_xM} 
        \Bigl( \bigl( \tH_{t;k,f}^{\mu_0\to\mu_1}(x) \bigr)^{[k]} \Bigr) \,d\mu_0(x). \tag{3.22}
    \end{align*}
\end{flemma}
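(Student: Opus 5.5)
The plan is to prove the first equality by the classical change-of-variables (Monge--Ampère) argument of Cordero-Erausquin--McCann--Schmuckenschläger, and then obtain the second equality from Lemma 2.5, i.e.\ \eqref{eq:2.46}.

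\textbf{Step 1: Jacobian equation.} Write $\mu_t=\rho_t\Vol_g$. By \cite{cordero-erausquin_riemannian_2001}, for $\mu_0$-a.e.\ $x\in D(\mu_0,\mu_1)$ the map $F_t$ is approximately differentiable at $x$ with $dF_t(x)=\tJ_t(x)$, expressed in the parallel frame along $\gamma$. Moreover $\det\tJ_t(x)>0$ for $t\in[0,1)$, and the Jacobian equation
\[
\rho_0(x)=\rho_t\bigl(F_t(x)\bigr)\det\tJ_t(x)
\]
holds $\mu_0$-a.e., with $F_t$ injective $\mu_0$-a.e. Pushing forward then gives
\[
H(\mu_t)=\int_M\log\rho_t\bigl(F_t(x)\bigr)\,d\mu_0(x)=H(\mu_0)-\int_M\log\det\tJ_t(x)\,d\mu_0(x).
\]

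\textbf{Step 2: express $\log\det\tJ_t$ through the entropy tensor.} Jacobi's formula gives $\frac{d}{ds}\log\det\tJ_s=\tr(\dot\tJ_s\tJ_s^{-1})=\tr\tU_s$. Since $\tJ_0=\Id$, integrating yields $-\log\det\tJ_t=\tr\tH_t^{\mu_0\to\mu_1}(x)$ by \eqref{eq:2.62}. For the weight term,
\[
\int_M f\,d\mu_t=\int_M f\bigl(F_t(x)\bigr)\,d\mu_0(x).
\]
Combining this with \eqref{eq:2.64} and $\tr\Id=n$ gives
\[
\frac nk\int_M\bigl(f(F_t(x))-f(x)\bigr)\,d\mu_0=\int_M\tr\Bigl(\tfrac{f(F_t(x))-f(x)}{k}\Id\Bigr)\,d\mu_0 .
\]
Adding the two pieces proves the first line of \eqref{eq:3.22}. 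The second line then follows pointwise from \eqref{eq:2.46}, applied to the self-adjoint operator $\tH^{\mu_0\to\mu_1}_{t;k,f}(x)$. This operator is symmetric because each $\tU_s$ is symmetric.

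\textbf{Main obstacle.} The delicate point is the endpoint $t=1$, where $\tJ_1$ may degenerate on a null set, together with the integrability needed for the entropy formula. At $t=1$, since $\mu_1\ll\Vol_g$, I would use McCann's result that $\det dF>0$ $\mu_0$-a.e. and that the Jacobian equation still holds there. Integrability is not a problem: the measures are compactly supported, $H(\mu_t)$ is finite or $+\infty$ consistently on both sides, and $\log\det\tJ_t$ is bounded above on the support by semiconvexity of $\theta$. The remaining task is to justify that the absolutely continuous path $s\mapsto\tJ_s(x)$ is invertible on $[0,t]$, which holds because $\gamma$ is minimizing and $F(x)\notin\operatorname{cut}(x)$. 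I would rely on \cite[Theorem~4.2, Lemma~6.1]{cordero-erausquin_riemannian_2001} for these facts rather than reprove them.
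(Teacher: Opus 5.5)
Your proposal is correct and follows the paper's proof essentially step for step. Both proofs use the Jacobian equation \eqref{eq:3.23} integrated against $\mu_0$, Jacobi's formula giving $\log\det\tJ_t(x)=-\tr\tH_t^{\mu_0\to\mu_1}(x)$, the pushforward identity $\int_M f\,d\mu_t=\int_M f(F_t(x))\,d\mu_0(x)$ combined with \eqref{eq:2.64}, and finally \eqref{eq:2.46} for the second line.

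One aside in your last paragraph is wrong, though it concerns integrability, which the paper does not address at all. Semiconvexity of $\theta$ bounds $\Hess\theta$ from below, so it gives no upper bound on $\log\det\tJ_t$. For example, on $\mathbb{R}$ take $\theta(x)=|x|^{3/2}-\tfrac12x^2$ with $\mu_0$ uniform on $[-1,1]$; then $\log\det\tJ_t\to+\infty$ at $0$. Splitting $\int_M\log\rho_t(F_t(x))\,d\mu_0(x)$ into two integrals is better justified by assuming, e.g., $H(\mu_0)<\infty$.
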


\begin{proof}
    First, we fix $t \in [0, 1]$, recall that there exists a Borel set $K_t \subset M$ of full $\mu_0$-measure such that, for every $x \in K_t$, we have
    \begin{align*}
    \label{eq:3.23}
        \frac{d \mu_0}{d \Vol_g} = \frac{d \mu_t}{d \Vol_g}(F_t(x)) \det(\tJ_t(x)). \tag{3.23}
    \end{align*}By taking logarithms of equation \eqref{eq:3.23} and integrating both sides with respect to $\mu_0$ over $K_t$, we obtain

\begin{align*}
&\kern-2em H_{k,f}(\mu_0) \\
&= H(\mu_0) + \frac{n}{k} \int_M f \, d \mu_0 = 
\int_{K_t}
\log\Bigl(
\frac{d\mu_0}{d\Vol_g}(x)
\Bigr)
\,d\mu_0(x)
+
\frac{n}{k}
\int_M
f\,d\mu_0
\\
&=
\int_{K_t}
\log\Bigl(
\frac{d\mu_t}{d\Vol_g}\bigl(F_t(x)\bigr)
\Bigr)
\,d\mu_0(x)
+
\int_{K_t}
\log\det\bigl(\tJ_t(x)\bigr)
\,d\mu_0(x)
+
\frac{n}{k}
\int_M
f\,d\mu_0.
\end{align*}
Since $K_t$ has full $\mu_0$-measure, we may replace $K_t$ by $M$ to obtain
\begin{align*}
\label{eq:3.24}
&\kern-2em H_{k,f}(\mu_0) \\
&=
\int_M
\log\Bigl(
\frac{d\mu_t}{d\Vol_g}\bigl(F_t(x)\bigr)
\Bigr)
\,d\mu_0(x)
+
\int_M
\log\det\bigl(\tJ_t(x)\bigr)
\,d\mu_0(x)
+
\frac{n}{k}
\int_M
f\,d\mu_0
\\
&=
H_{k,f}(\mu_t)
+
\int_M
\log\det\bigl(\tJ_t(x)\bigr)
\,d\mu_0(x)
+
\frac{n}{k}
\int_M
f\,d\mu_0
-
\frac{n}{k}
\int_M
f\,d\mu_t. \tag{3.24}
\end{align*}

    Next, by the Jacobi formula, we get
    \begin{align*}
    \label{eq:3.25}
        &\kern-2em \frac{d}{dt} \log \det \bigl( \tJ_t(x) \bigr) \\
        &= \tr \Bigl ( \dot \tJ_t(x)  \tJ_t(x)^{-1}  \Bigr) = \tr \bigl( \tU_t(x) \bigr) = - \frac{d}{dt} \tr \Bigl( \tH^{\mu_0 \rightarrow \mu_1}_t(x)  \Bigr). \tag{3.25}
    \end{align*}
    
    Since $\tJ_0(x)=\Id$ and $\tH_0^{\mu_0\to\mu_1}(x)=0$,
integrating identity \eqref{eq:3.22} with respect to $t$ gives
$\log\det(\tJ_t(x))
=
-
\tr\bigl(
\tH_t^{\mu_0\to\mu_1}(x)
\bigr)$. Thus, by the definition of $\tH^{\mu_0\to\mu_1}_{t; k, f}(x)$, we obtain
\begin{align*}
\label{eq:3.26}
    \log\det\bigl(\tJ_t(x)\bigr)
=
-
\tr\Bigl(
\tH_{t;k,f}^{\mu_0\to\mu_1}(x)
\Bigr)
+
\frac{n}{k}
\bigl(
f(F_t(x))-f(x)
\bigr). \tag{3.26}
\end{align*}
    
By combining Lemma~\hyperlink{L:2.6}{2.6} with equalities \eqref{eq:3.24} and \eqref{eq:3.26}, we obtain
    \begin{align*}
H_{k,f}(\mu_t)
&=
H_{k,f}(\mu_0)
-
\int_M
\log\det\bigl(\tJ_t(x)\bigr)
\,d\mu_0(x)
-
\frac{n}{k}
\int_M
f\,d\mu_0
+
\frac{n}{k}
\int_M
f\,d\mu_t
\\
&=
H_{k,f}(\mu_0)
+
\int_M
\tr\bigl(
\tH_{t;k,f}^{\mu_0\to\mu_1}(x)
\bigr)
\,d\mu_0(x)
\\
&=
H_{k,f}(\mu_0)
+
\frac{1}{\binom{n-1}{k-1}}
\int_M
\operatorname{tr}_{\bigwedge^kT_xM}
\Bigl(
\bigl(
\tH_{t;k,f}^{\mu_0\to\mu_1}(x)
\bigr)^{[k]}
\Bigr)
\,d\mu_0(x).
\end{align*}
This finishes the proof.
\end{proof}

As an immediate consequence of Corollary~\hyperlink{C:3.2}{3.2} and Lemma~\hyperlink{L:3.1}{3.1}, we obtain the following
displacement convexity result for the weighted Boltzmann $k$-entropy.

\begin{fcor}
Let $\mu_0, \mu_1 \in \cPac(M)$, and let $(\mu_t)_{t\in [0, 1]}$ be the Wasserstein geodesic in $\cPac(M)$ between $\mu_0$ and $\mu_1$.
Assume that $\Ric_{k,f}^N\ge K$, then for every $t\in[0,1]$, we have
\begin{align*}
\label{eq:3.27}
    &\kern-2em    H_{k, f}(\mu_t)  - (1-t) H_{k, f}(\mu_0) - t H_{k, f}(\mu_1)     \\
    &\leq      \begin{cases}
    -\dfrac{nK}{2k} t(1-t) \cW_2^2(\mu_0, \mu_1),
    & K  >0,\\[3mm]
    0,
    & K   =0,\\[3mm] 
     \frac{3nN}{k} t(1-t)  
    \log \Biggl(
    \dfrac{\sinh\Bigl(\sqrt{-\frac{K}{N}}   \cW_2(\mu_0, \mu_1) \Bigr)  }{\sqrt{-\frac{K}{N}}    \cW_2(\mu_0, \mu_1) }
    \Biggr),
    & K  <0.
    \end{cases}  \tag{3.27}
\end{align*}
\end{fcor}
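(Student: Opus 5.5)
The plan is to integrate the pointwise estimate \eqref{eq:3.20} of Corollary~\hyperlink{C:3.2}{3.2} against $\mu_0$ and then convert the exterior-power traces into entropies via Lemma~\hyperlink{L:3.1}{3.1}. First, since $\Ric_{k,f}^N\ge K$, Corollary~\hyperlink{C:3.1}{3.1} gives condition (5) for every $x\in D(\mu_0,\mu_1)$, which is a set of full $\mu_0$-measure. Hence \eqref{eq:3.20} holds $\mu_0$-a.e. We divide it by $\binom{n-1}{k-1}$, integrate in $d\mu_0(x)$, and use \eqref{eq:3.22} at times $t$ and $1$. Together with $H_{k,f}(\mu_0)=(1-t)H_{k,f}(\mu_0)+tH_{k,f}(\mu_0)$, this shows that the left-hand side of \eqref{eq:3.27} equals
\[
\frac{1}{\binom{n-1}{k-1}}\int_M\Bigl[\tr_{\bigwedge^k}\bigl((\tH_{t;k,f})^{[k]}\bigr)-t\,\tr_{\bigwedge^k}\bigl((\tH_{1;k,f})^{[k]}\bigr)\Bigr]d\mu_0 .
\]
The constants then come from $\binom{n}{k}/\binom{n-1}{k-1}=n/k$.

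Next we handle the right-hand sides case by case, using $\int_M|\nabla\theta|^2\,d\mu_0=\cW_2^2(\mu_0,\mu_1)$ from McCann's theorem \eqref{eq:2.49}. For $K>0$, integration immediately gives $-\frac{nK}{2k}t(1-t)\cW_2^2$. For $K=0$, the bound is $0$. For $K<0$, we must bound
\[
\int_M \log\Bigl(\frac{\sinh(c|\nabla\theta(x)|)}{c|\nabla\theta(x)|}\Bigr)\,d\mu_0(x),\qquad c=\sqrt{-K/N},
\]
by $\log\bigl(\sinh(c\cW_2)/(c\cW_2)\bigr)$. For this, we plan to use that $\phi(r)\coloneqq\log(\sinh(\sqrt r)/\sqrt r)$ is concave in $r=s^2\ge 0$, and then apply Jensen's inequality to $r=|\nabla\theta|^2$ with $\int r\,d\mu_0=\cW_2^2$. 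This yields exactly the $\frac{3nN}{k}t(1-t)\log(\cdots)$ term.

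The main obstacle is verifying that concavity. Writing $r=s^2$, concavity of $\phi$ in $r$ is equivalent to $(\log(\sinh s/s))'/s$ being nonincreasing in $s$. This derivative-over-$s$ equals $(\coth s-1/s)/s$, and we expect it to decrease from $1/3$ to $0$, which can be checked through the series/Mittag-Leffler expansion $\coth s-1/s=\sum_{m\ge1}2s/(s^2+m^2\pi^2)$. Indeed, dividing by $s$ leaves a sum of decreasing terms $2/(s^2+m^2\pi^2)$, so monotonicity is immediate. The remaining minor points are integrability, measurability, and the convention $\sinh(0)/0=1$ when $\nabla\theta=0$. These are harmless because the measures are compactly supported, so $|\nabla\theta|$ is bounded on the support.
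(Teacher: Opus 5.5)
Your proposal is correct and follows the paper's proof essentially step for step. You integrate \eqref{eq:3.20} against $\mu_0$, use $\int_M|\nabla\theta|^2\,d\mu_0=\cW_2^2(\mu_0,\mu_1)$ together with $\binom{n}{k}/\binom{n-1}{k-1}=n/k$ and the trace identity (3.22) of Lemma 3.1, and handle $K<0$ by Jensen for the concave function $r\mapsto\log(\sinh\sqrt r/\sqrt r)$. Your Mittag-Leffler argument supplies the concavity check that the paper only asserts with ``one can check''.
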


\begin{frmk}
Assume that $K<0$. For $r \geq 0$, we have the estimate
\[
\log\left(\frac{\sinh r}{r}\right)
\leq
\frac{r^2}{6},
\qquad r\geq0.
\]
With $r = \sqrt{-\frac{K}{N}}
\cW_2(\mu_0,\mu_1)$,
we obtain
\[
\log\Biggl(
\frac{
\sinh\Bigl(
\sqrt{-\frac{K}{N}}
\cW_2(\mu_0,\mu_1)
\Bigr)
}{
\sqrt{-\frac{K}{N}}
\cW_2(\mu_0,\mu_1)
}
\Biggr)
\leq
-\frac{K}{6N}
\cW_2^2(\mu_0,\mu_1).
\]
Consequently, we have the following quadratic upper bound:
\begin{align*}
  \frac{3nN}{k}t(1-t)
\log\Biggl(
\frac{
\sinh\Bigl(
\sqrt{-\frac{K}{N}}
\cW_2(\mu_0,\mu_1)
\Bigr)
}{
\sqrt{-\frac{K}{N}}
\cW_2(\mu_0,\mu_1)
}
\Biggr)
\leq
-\frac{nK}{2k}
t(1-t)
\cW_2^2(\mu_0,\mu_1).
\end{align*}
Thus, for every $K\in\mathbb R$, we obtain the displacement convexity
estimate:
\begin{align*}
\label{eq:3.28}
    H_{k, f}(\mu_t) - t H_{k, f}(\mu_1) - (1-t) H_{k, f}(\mu_0) \leq -\dfrac{nK}{2k} t(1-t) \cW_2^2(\mu_0, \mu_1). \tag{3.28}
\end{align*}
\end{frmk}

\begin{proof}
First, by integrating inequality \eqref{eq:3.20} over $M$ with respect to $\mu_0$, we obtain
\begin{align*}
&\kern-2em \int_M  \tr_{\bigwedge^kT_xM}
\Bigl(
\bigl(
{\tH}^{\mu_0\to\mu_1}_{t;k,f}(x)
\bigr)^{[k]} \, d \mu_0(x)
- t  \int_M  \tr_{\bigwedge^kT_xM}
\Bigl(
\bigl(
{\tH}^{\mu_0\to\mu_1}_{1;k,f}(x)
\bigr)^{[k]} \, d \mu_0(x) \\
&\leq     \begin{cases}
    -\dfrac{\binom{n}{k}K}{2} t(1-t) \int_M |\nabla \theta (x)|^2 \, d \mu_0(x),
    & K  >0,\\[3mm]
    0,
    & K   =0,\\[3mm] 
    3 \binom{n}{k}N t(1-t) \int_M
    \log \Biggl(
    \dfrac{\sinh\Bigl(\sqrt{-\frac{K}{N}}    |\nabla \theta(x)| \Bigr)  }{\sqrt{-\frac{K}{N}}    |\nabla \theta(x)| }
    \Biggr) \, d \mu_0(x),
    & K  <0,
    \end{cases} \\
&=     \begin{cases}
    -\dfrac{\binom{n}{k}K}{2} t(1-t) \cW_2^2(\mu_0, \mu_1),
    & K  >0,\\[3mm]
    0,
    & K   =0,\\[3mm] 
    3 \binom{n}{k}N t(1-t) \int_M
    \log \Biggl(
    \dfrac{\sinh\Bigl(\sqrt{-\frac{K}{N}}    |\nabla \theta(x)| \Bigr)  }{\sqrt{-\frac{K}{N}}    |\nabla \theta(x)| }
    \Biggr) \, d \mu_0(x),
    & K  <0,
    \end{cases}     \\
    &\leq \begin{cases}
    -\dfrac{\binom{n}{k}K}{2} t(1-t) \cW_2^2(\mu_0, \mu_1),
    & K  >0,\\[3mm]
    0,
    & K   =0,\\[3mm] 
    3 \binom{n}{k}N t(1-t)  
    \log \Biggl(
    \dfrac{\sinh\Bigl(\sqrt{-\frac{K}{N}}   \cW_2(\mu_0, \mu_1) \Bigr)  }{\sqrt{-\frac{K}{N}}    \cW_2(\mu_0, \mu_1) }
    \Biggr),
    & K  <0.
    \end{cases} 
\end{align*}Here, the last inequality follows from Jensen's inequality. Since if we define $\Phi(s) \coloneqq \log \bigl( \frac{\sinh(\sqrt{s})}{s} \bigr)$ for $s \geq 0$ with $\Phi(0) = 0$, then one can check that $\Phi$ is increasing and concave on $[0, \infty)$. That is, we have
\begin{align*}
    &\kern-2em \int_M \log \Biggl(
    \dfrac{\sinh\Bigl(\sqrt{-\frac{K}{N}}    |\nabla \theta(x)| \Bigr)  }{\sqrt{-\frac{K}{N}}    |\nabla \theta(x)| }
    \Biggr) \, d \mu_0(x) \\
    &= \int_M \Phi \Bigl(  -\frac{K}{N} |\nabla \theta(x)|^2  \Bigr) \, d \mu_0(x) 
    \leq \Phi \biggl (   - \frac{K}{N}  \int_M |\nabla \theta(x)|^2 \, d \mu_0(x)   \biggr) \\
    &= \log \Biggl(
    \dfrac{\sinh\Bigl(\sqrt{-\frac{K}{N}}   \cW_2(\mu_0, \mu_1) \Bigr)  }{\sqrt{-\frac{K}{N}}    \cW_2(\mu_0, \mu_1) }
    \Biggr).
\end{align*}

\medskip

Therefore, by equality \eqref{eq:3.23}, we obtain 
\begin{align*}
    &\kern-2em  H_{k, f}(\mu_t)  - (1-t) H_{k, f}(\mu_0) - t H_{k, f}(\mu_1)      \\
    &\leq     \begin{cases}
    -\dfrac{nK}{2k} t(1-t) \cW_2^2(\mu_0, \mu_1),
    & K  >0,\\[3mm]
    0,
    & K   =0,\\[3mm] 
    \frac{3nN}{k} t(1-t)  
    \log \Biggl(
    \dfrac{\sinh\Bigl(\sqrt{-\frac{K}{N}}   \cW_2(\mu_0, \mu_1) \Bigr)  }{\sqrt{-\frac{K}{N}}    \cW_2(\mu_0, \mu_1) }
    \Biggr),
    & K  <0.
    \end{cases}  
\end{align*}
This finishes the proof. 
\end{proof}

\begin{frmk}
By Remark~\hyperlink{R:2.2}{2.2}, for any
$1\leq k_1\leq k_2\leq n$, the curvature condition
$\Ric^{N_1}_{k_1,f}\geq K_1$
implies
$\Ric^{\frac{k_2}{k_1}N_1}_{k_2,\frac{k_2}{k_1}f}
\geq
\frac{k_2}{k_1}K_1$.

On the other hand, the weighted Boltzmann $k$-entropy is invariant under this
scaling. Indeed,
\[
H_{k_2,\frac{k_2}{k_1}f}(\mu)
=
H(\mu)
+
\frac{n}{k_2}
\int_M
\frac{k_2}{k_1}f\,d\mu
=
H(\mu)
+
\frac{n}{k_1}
\int_M
f\,d\mu
=
H_{k_1,f}(\mu).
\]
Moreover, we have
\[
\frac{K_1}{N_1}
=
\frac{\frac{k_2}{k_1}K_1}
{\frac{k_2}{k_1}N_1},
\qquad
\frac{k_1}{N_1}
=
\frac{k_2}{\frac{k_2}{k_1}N_1}.
\]
Consequently, the displacement convexity inequality obtained from
$\Ric^{N_1}_{k_1,f}\geq K_1$ coincides exactly with that obtained from its
rescaled consequence
\[
\Ric^{\frac{k_2}{k_1}N_1}_{k_2,\frac{k_2}{k_1}f}
\geq
\frac{k_2}{k_1}K_1.
\]

This shows that the weighted Boltzmann $k$-entropy
$H_{k,f}$ depends only on the normalized weight $f/k$.
Consequently, its displacement convexity cannot distinguish
$\Ric^{N_1}_{k_1,f}\geq K_1$
from its rescaled counterparts for smaller values of $k$.
In particular, the functional $H_{k,f}$ alone cannot detect the stronger
geometric information contained in lower bounds for smaller intermediate
Bakry--Émery Ricci curvature.
\end{frmk}

\begin{flemma}
Let $\mu_0, \mu_1 \in \cPac(M)$, and let $(\mu_t)_{t\in [0, 1]}$ be the Wasserstein geodesic in $\cPac(M)$ between $\mu_0$ and $\mu_1$.
Assume that $\Ric_{k,f}^N\ge K$, then we have
\begin{align}
\label{eq:3.29}
\frac{d^2}{dt^2}H_{k,f}(\mu_t)
\geq
\frac{nK}{k}\cW_2^2(\mu_0,\mu_1)
\tag{3.29}
\end{align}
in the distributional sense on \((0,1)\). More precisely, for every
nonnegative function
\(\varphi\in C_c^\infty((0,1))\),
\begin{align*}
\int_0^1 H_{k,f}(\mu_t)\varphi''(t)\,dt
\geq
\frac{nK}{k}\cW_2^2(\mu_0,\mu_1)
\int_0^1\varphi(t)\,dt.
\end{align*}
Consequently, we have
\begin{align}
\label{eq:3.30}
H_{k,f}(\mu_1)-H_{k,f}(\mu_0)
\geq
\left.
\frac{d^+}{dt}H_{k,f}(\mu_t)
\right|_{t=0}
+
\frac{nK}{2k}\cW_2^2(\mu_0,\mu_1), \tag{3.30}
\end{align}
where the right derivative is understood in the extended sense, that is, we allow the right derivative to be $- \infty$.
\end{flemma}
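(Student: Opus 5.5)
The plan is to derive \eqref{eq:3.29} from the trace identity \eqref{eq:3.22} and the pointwise second-order inequality \eqref{eq:3.4}, using the Riccati equation \eqref{eq:3.8}. By \eqref{eq:3.22}, we have
\[
H_{k,f}(\mu_t)=H_{k,f}(\mu_0)+\int_M h_x(t)\,d\mu_0(x),
\qquad
h_x(t)\coloneqq\tr\bigl(\tH^{\mu_0\to\mu_1}_{t;k,f}(x)\bigr).
\]
For each $x\in D(\mu_0,\mu_1)$, the function $h_x$ is smooth on $[0,1)$ (where $\tJ_t(x)$ is invertible), with $h_x(0)=0$.

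Tracing over all of $T_xM$ amounts to averaging over the Grassmannian, as in \eqref{eq:1.7}: $h_x=\frac nk\int_{\Gr(k,T_xM)}\tr(\tH\vert_\Sigma)\,d\mu_{\Gr}$. Applying \eqref{eq:3.4} (or simply \eqref{eq:3.3}) for each $\Sigma$ and discarding the nonnegative square term gives
\[
\ddot h_x(t)\ge\frac nk K|\nabla\theta(x)|^2 \qquad\text{on }(0,1).
\]

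Next I test against a nonnegative $\varphi\in C_c^\infty((0,1))$. Integrating by parts twice in $t$ for each fixed $x$ (with no boundary terms, since $\varphi$ has compact support) gives
\[
\int_0^1 h_x\varphi''\,dt=\int_0^1\ddot h_x\,\varphi\,dt\ge\frac nkK|\nabla\theta(x)|^2\int_0^1\varphi\,dt.
\]
I then integrate in $d\mu_0$ and use Fubini, together with $\int_M|\nabla\theta|^2\,d\mu_0=\cW_2^2(\mu_0,\mu_1)$ from \eqref{eq:2.49}. The constant $H_{k,f}(\mu_0)$ drops out because $\int_0^1\varphi''\,dt=0$.

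The main obstacle is justifying Fubini. This requires $(t,x)\mapsto h_x(t)$ to be integrable on $\supp\varphi\times M$. On a compact $[\delta,1-\delta]$, the Jacobian $\tJ_t(x)$ is nondegenerate, but there is no uniform bound in $x$ a priori. I would circumvent this in the standard way: $\log\det\tJ_t=-\tr\tH_t$ and $f(F_t)$ are bounded, and the entropy $H(\mu_t)$ is finite. Alternatively, one can use the concavity of $t\mapsto\det\tJ_t^{1/n}$ along the interpolation (McCann/Cordero-Erausquin et al.) to bound $h_x$ from above by an integrable function, and the lower bound comes from $\ddot h_x\ge -C$ together with $h_x(0)=0$. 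If this turns out to be delicate, I would instead invoke Corollary~\hyperlink{C:3.2}{3.2} and Lemma~\hyperlink{L:3.1}{3.1} directly. Applying \eqref{eq:3.28} to every sub-geodesic $\mu_{[s,u]}$ (Wasserstein geodesics restrict to geodesics, with distance $|u-s|\cW_2$) shows that $t\mapsto H_{k,f}(\mu_t)-\frac{nK}{2k}\cW_2^2 t^2$ is convex on $[0,1]$. This is exactly equivalent to \eqref{eq:3.29} in the distributional sense, and I regard it as the cleaner route.

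For \eqref{eq:3.30}, set $G(t)\coloneqq H_{k,f}(\mu_t)-\frac{nK}{2k}\cW_2^2 t^2$, which is convex on $[0,1]$. By \eqref{eq:3.28} it is finite at the endpoints, hence finite throughout. A convex function has a right derivative at $0$ in $[-\infty,\infty)$ and satisfies $G(1)-G(0)\ge G'_+(0)$, since the difference quotients $(G(t)-G(0))/t$ are nondecreasing in $t$. Because $\frac{d^+}{dt}t^2\big\vert_{0}=0$, we have $G'_+(0)=\frac{d^+}{dt}H_{k,f}(\mu_t)\big\vert_{t=0}$. Substituting $G(1)-G(0)=H_{k,f}(\mu_1)-H_{k,f}(\mu_0)-\frac{nK}{2k}\cW_2^2$ then yields \eqref{eq:3.30}.
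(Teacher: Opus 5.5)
Your fallback route is the paper's proof. The paper applies (3.28) to every sub-geodesic $(\mu_t)_{t\in[s,r]}$, using $\cW_2(\mu_s,\mu_r)=(r-s)\cW_2(\mu_0,\mu_1)$, to show that $t\mapsto H_{k,f}(\mu_t)-\frac{nK}{2k}\cW_2^2(\mu_0,\mu_1)\,t^2$ is convex, and reads off (3.29) from this. It then gets (3.30) from the difference quotient at $t=0$ in (3.28) with $s=0$, $r=1$, which is the same as your monotone-difference-quotient argument. That part of your proposal is correct.

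One small correction there: (3.28) does not give finiteness of $H_{k,f}$ at the endpoints. Finiteness of $H(\mu_0)$ and $H(\mu_1)$ is an implicit standing assumption. Given that, (3.28) and the bound $H>-\infty$ for measures supported in a fixed compact set make $H_{k,f}(\mu_t)$ finite for all $t$.

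The Fubini route you lead with should be dropped as written, because its justification has false steps.

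\begin{itemize}
\item The map $x\mapsto\log\det\tJ_t(x)$ is not bounded. By (3.23) it equals $\log\rho_0(x)-\log\rho_t(F_t(x))$, where $\rho_t\coloneqq d\mu_t/d\Vol_g$.
\item The bound $\ddot h_x\ge -C$ together with $h_x(0)=0$ gives only the chord bound $h_x(t)\le t\,h_x(1)+\frac C2\,t(1-t)$, which is an upper bound. A lower bound would need a lower bound on $\dot h_x(0)=-\tr\Hess\theta(x)+\frac nk\langle\nabla f,\nabla\theta(x)\rangle$. None is available, since $\theta$ is only semiconvex: $\Hess\theta$ is bounded below, not above. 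Already in $\mathbb R^n$, $h_x(t)=-\log\det(\Id+t\Hess\theta(x))$ is unbounded below in $x$.
\item Concavity of $t\mapsto\det\tJ_t^{1/n}$ is equivalent to $\Ric\ge0$, which does not follow from $\Ric^N_{k,f}\ge K$.
\end{itemize}

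To dominate $|h_x(t)|$ uniformly for $t\in\supp\varphi$, you need $\sup_{t\in\supp\varphi}\int_M|\log\rho_t|\,d\mu_t<\infty$. That comes from finite endpoint entropies plus the displacement convexity (3.28) itself. So this route is not independent of the paper's argument, only more laborious, and you were right to prefer the direct one.
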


\begin{proof}
By inequality \eqref{eq:3.28}, for every
\(0\leq s<r\leq 1\) and every \(t\in[s,r]\), we have
\begin{align*}
\label{eq:3.31}
&\kern-2em H_{k, f}(\mu_t) \\
&\leq
\frac{r-t}{r-s} H_{k, f}(\mu_s)
+
\frac{t-s}{r-s} H_{k, f}(\mu_r)
-\frac{nK}{2k}
\frac{(t-s)(r-t)}{(r-s)^2}
\cW_2^2(\mu_s,\mu_r) \\
&=
\frac{r-t}{r-s} H_{k, f}(\mu_s)
+
\frac{t-s}{r-s} H_{k, f}(\mu_r)
-\frac{nK}{2k}
 (t-s)(r-t) 
\cW_2^2(\mu_0,\mu_1). \tag{3.31}
\end{align*}
By \eqref{eq:3.31}, that the function
\[
g(t)\coloneqq H_{k, f}(\mu_t) -\frac{nK}{2k}\cW_2^2(\mu_0, \mu_1) t^2
\]
is convex on \([0,1]\). Since using
\[
\frac{r-t}{r-s}s^2
+
\frac{t-s}{r-s}r^2
-
t^2
=
(t-s)(r-t),
\]
inequality \eqref{eq:3.31} is equivalent to
\[
g(t)
\leq
\frac{r-t}{r-s}g(s)
+
\frac{t-s}{r-s}g(r).
\]

Since \(g\) is convex, its distributional second derivative is
nonnegative. Hence, we obtain 
\[
\frac{d^2}{dt^2}H_{k,f}(\mu_t) \geq 
\frac{nK}{k}\cW_2^2(\mu_0,\mu_1)
\]
in the distributional sense on \((0,1)\), which proves
\eqref{eq:3.29}.

\medskip

Last, by taking \(s=0\) and \(r=1\) in inequality \eqref{eq:3.28},
we obtain
\[
H_{k, f}(\mu_t)
\leq
(1-t)H_{k, f}(\mu_0) + t H_{k, f}(\mu_1)
-\frac{nK}{2k} \cW_2^2(\mu_0, \mu_1)t(1-t).
\]
Thus, for every \(t\in(0,1]\),
\[
\frac{H_{k, f}(\mu_t)-H_{k, f}(\mu_0)}{t}
\leq
H_{k, f}(\mu_1)-H_{k, f}(\mu_0)
-\frac{nK}{2k} \cW_2^2(\mu_0, \mu_1)(1-t).
\]
By letting \(t\downarrow0\), we obtain 
\[
 \frac{d^+}{dt}H_{k, f}(\mu_t) \Big|_{t=0}
\leq
H_{k, f}(\mu_1) -  H_{k, f}(\mu_0) -\frac{nK}{2k}\cW_2^2(\mu_0, \mu_1).
\]
This finishes the proof. 
\end{proof}

The preceding lemma establishes the infinitesimal convexity of
$H_{k,f}$ along Wasserstein geodesics. We now combine this convexity
estimate with the evolution variational inequality for the heat
semigroup. Together with \eqref{eq:3.30}, this yields an evolution
inequality for the Wasserstein distance, leading to a quantitative
estimate for the action of the heat semigroup on Wasserstein distance.


\begin{flemma}
Let $(M,g,e^{-f}\,d\Vol_g)$ be a closed weighted Riemannian
manifold, let $\mu_0,\mu_1\in\cPac(M)$, and let
$(P_\tau)_{\tau\geq0}$ be the heat semigroup on $M$.
For each $\tau>0$, let
$\bigl(\mu_s(\tau)\bigr)_{s\in[0,1]}$
be the Wasserstein geodesic from $P_\tau\mu_0$ to $\mu_1$.
Then, for every $T>0$ and almost every $\tau\in(0,T)$,
\begin{align*}
\label{eq:3.32}
&\kern-2em \frac12
\frac{d}{d\tau}
\cW_2^2(P_\tau\mu_0,\mu_1) \\
&\leq
\frac{d^+}{ds}
H_{k,f}\bigl(\mu_s(\tau)\bigr)
\Big|_{s=0}
-
\frac{n}{k}
\int_M
\Bigl\langle
\nabla f,
\nabla\theta^{P_\tau\mu_0\rightarrow\mu_1}
\Bigr\rangle
\,dP_\tau\mu_0.
\tag{3.32}
\end{align*}
Here, for $\nu_0,\nu_1\in\cPac(M)$,
$-\theta^{\nu_0\rightarrow\nu_1}$ denotes a Kantorovich potential
from $\nu_0$ to $\nu_1$.
Consequently, if $\Ric_{k,f}^N\geq K$, then, for every
$\mu_0,\mu_1\in\cPac(M)$ and every $T \geq0$,
\begin{align*}
\label{eq:3.33}
&\kern-2em \cW_2(P_T \mu_0,P_T \mu_1) \\
&\leq
\begin{cases}
\displaystyle
e^{-\frac{nK}{k}T}
\cW_2(\mu_0,\mu_1)
+
\frac{2}{K}
\|\nabla f\|_{L^\infty(M)}
\Bigl(
1-e^{-\frac{nK}{k}T}
\Bigr),
& K\neq0,
\\[4mm]
\displaystyle
\cW_2(\mu_0,\mu_1)
+
\frac{2n}{k}
\|\nabla f\|_{L^\infty(M)}
 T,
& K=0.
\end{cases}
\tag{3.33}
\end{align*}
\end{flemma}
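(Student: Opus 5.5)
The plan is to prove \eqref{eq:3.32} by first-order Otto calculus along the heat flow, and then to derive \eqref{eq:3.33} by running \eqref{eq:3.32} simultaneously from both endpoints and applying a Gronwall-type argument.

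\textbf{Step 1: the heat-flow derivative of $\cW_2^2(\cdot,\mu_1)$.} Since $M$ is closed, for $\tau>0$ the measure $P_\tau\mu_0=\rho_\tau\,d\Vol_g$ has a smooth, strictly positive density. The curve $\tau\mapsto P_\tau\mu_0$ is locally absolutely continuous in $(\mathcal P_2(M),\cW_2)$, with velocity field $-\nabla\log\rho_\tau$. Hence $\tau\mapsto \cW_2^2(P_\tau\mu_0,\mu_1)$ is locally absolutely continuous and differentiable for a.e.\ $\tau$.

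At such $\tau$, take $-\theta=-\theta^{P_\tau\mu_0\to\mu_1}$ as the Kantorovich potential, so that the transport map is $\exp(\nabla\theta)$. Kantorovich duality with the potential frozen at time $\tau$ (the standard first-variation argument, as in Ambrosio--Gigli--Savaré or Villani, Thm.~23.9) gives
\[
\tfrac12\tfrac{d}{d\tau}\cW_2^2(P_\tau\mu_0,\mu_1)\le \int_M\langle\nabla\theta,\nabla\log\rho_\tau\rangle\,dP_\tau\mu_0=\int_M\langle\nabla\theta,\nabla\rho_\tau\rangle\,d\Vol_g .
\]

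\textbf{Step 2: identify the right-hand side with an entropy derivative.} Along the geodesic $\mu_s(\tau)$, the right derivative of the unweighted entropy satisfies
\[
\frac{d^+}{ds}H(\mu_s(\tau))\Big|_{s=0}\ge\int_M\langle\nabla\rho_\tau,\nabla\theta\rangle\,d\Vol_g .
\]
This follows from the Jacobian identity \eqref{eq:3.23}, the fact that $\tr\tU_0=\Delta_A\theta$ (the Alexandrov Laplacian), and the distributional inequality $\Delta_A\theta\le\Delta_{\mathcal D'}\theta$, which holds by semiconvexity of $\theta$. Integrating by parts against the smooth density $\rho_\tau$ then gives the displayed bound.

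The $f$-part of $H_{k,f}$ is smooth along the geodesic:
\[
\frac{d}{ds}\frac nk\int_M f\,d\mu_s\Big|_{s=0}=\frac nk\int_M\langle\nabla f,\nabla\theta\rangle\,dP_\tau\mu_0 .
\]
Hence
\[
\frac{d^+}{ds}H\Big|_{s=0}=\frac{d^+}{ds}H_{k,f}\Big|_{s=0}-\frac nk\int_M\langle\nabla f,\nabla\theta\rangle\,dP_\tau\mu_0 ,
\]
and combining this with Step 1 yields \eqref{eq:3.32}.

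\textbf{Step 3: the contraction estimate \eqref{eq:3.33}.} Set $h(\tau)=\cW_2(P_\tau\mu_0,P_\tau\mu_1)$. I would differentiate $h^2$ by splitting the increment into the variation in the first slot with the second slot frozen, plus the variation in the second slot. Step 1 applies in each slot, with potentials $\theta$ (from $P_\tau\mu_0$ to $P_\tau\mu_1$) and $\tilde\theta$ (the reverse direction).

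To each slot I apply \eqref{eq:3.32} followed by \eqref{eq:3.30}, the latter being valid because $\Ric^N_{k,f}\ge K$. The entropy differences $H_{k,f}(P_\tau\mu_1)-H_{k,f}(P_\tau\mu_0)$ and its negative cancel, and the two curvature terms combine. For the remaining $f$-terms, Cauchy--Schwarz gives
\[
\int_M|\langle\nabla f,\nabla\theta\rangle|\,dP_\tau\mu_0\le\|\nabla f\|_{L^\infty}\,h(\tau),
\]
and the same bound holds for $\tilde\theta$. Together these give, for a.e.\ $\tau$,
\[
(h^2)'\le-\frac{2nK}{k}h^2+\frac{4n}{k}\|\nabla f\|_{L^\infty}h .
\]

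For $K=0$, Lemma~2.2 with $b=0$ and $a=\frac{4n}{k}\|\nabla f\|_{L^\infty}$ gives directly $h(T)\le h(0)+\frac{2n}{k}\|\nabla f\|_{L^\infty}T$. For $K\neq0$, I apply the same $\epsilon$-regularisation to $e^{\frac{2nK}{k}\tau}h^2$, which yields $h'\le-\frac{nK}{k}h+\frac{2n}{k}\|\nabla f\|_{L^\infty}$ in integrated form. Gronwall then gives the stated bound, since the factors $n/k$ cancel in $\frac{2n/k}{nK/k}=\frac2K$.

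\textbf{Main obstacle.} The delicate point is Step 2, together with the two-slot differentiation in Step 3. One must justify that the first variation of $\cW_2^2$ is controlled by the frozen potential, and that $\frac{d^+}{ds}H$ is bounded below by $\int\langle\nabla\rho,\nabla\theta\rangle$, given that $\theta$ is only semiconvex. I would handle the latter by approximating $\Delta_A\theta$ by its distributional Laplacian, using that the singular part of the latter is nonnegative. The smoothness and positivity of $\rho_\tau$ on the closed manifold makes the integration by parts legitimate.
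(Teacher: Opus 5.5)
Your proposal is correct and follows the paper's route. You obtain \eqref{eq:3.32} from the first-order evolution variational inequality for the heat flow together with the first variation of $\int_M f\,d\mu_s$. You obtain \eqref{eq:3.33} by applying \eqref{eq:3.32} and \eqref{eq:3.30} in both slots, cancelling the entropy terms, bounding the drift by Cauchy--Schwarz, and integrating $(h^2)'\le -\frac{2nK}{k}h^2+\frac{4n}{k}\|\nabla f\|_{L^\infty}h$, exactly as the paper does.

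There are two differences. First, you justify the EVI step by hand, using the first variation of $\cW_2^2$ and the Alexandrov-versus-distributional Laplacian bound on the entropy slope, whereas the paper simply invokes it. Second, you integrate via an $\epsilon$-regularized integrating factor instead of dividing by $\cW_2$, which is slightly more careful at zeros of $h$.
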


\begin{proof}
Since the heat flow is the Wasserstein gradient flow of the
Boltzmann entropy $H$, its evolution variational inequality gives
\begin{align*}
\label{eq:3.34}
\frac12
\frac{d^+}{d\tau}
\cW_2^2(P_\tau\mu_0,\mu_1)
\leq
\frac{d^+}{ds}
H\bigl(\mu_s(\tau)\bigr)
\Big|_{s=0}.
\tag{3.34}
\end{align*}
By the relation between $H$ and $H_{k,f}$, we have
\begin{align*}
\frac{d^+}{ds}
H\bigl(\mu_s(\tau)\bigr)
\Big|_{s=0}
&=
\frac{d^+}{ds}
H_{k,f}\bigl(\mu_s(\tau)\bigr)
\Big|_{s=0}
-
\frac{n}{k}
\int_M
\Bigl\langle
\nabla f,
\nabla\theta^{P_\tau\mu_0\rightarrow\mu_1}
\Bigr\rangle
\,dP_\tau\mu_0.
\end{align*}
This proves \eqref{eq:3.32}. Moreover, by the Cauchy--Schwarz
inequality and the identity
\[
\int_M
\bigl|
\nabla\theta^{P_\tau\mu_0\rightarrow\mu_1}
\bigr|^2
\,dP_\tau\mu_0
=
\cW_2^2(P_\tau\mu_0,\mu_1),
\]
we obtain
\begin{align*}
\label{eq:3.35}
&\kern-2em \frac12
\frac{d^+}{d\tau}
\cW_2^2(P_\tau\mu_0,\mu_1) \\
&\leq
\frac{d^+}{ds}
H_{k,f}\bigl(\mu_s(\tau)\bigr)
\Big|_{s=0}
+
\frac{n}{k}
\|\nabla f\|_{L^\infty(M)}
\cW_2(P_\tau\mu_0,\mu_1).
\tag{3.35}
\end{align*}
Since
$\tau\mapsto\mathscr W_2^2(P_\tau\mu_0,\mu_1)$
is absolutely continuous, for every $T>0$ and for almost every
$\tau\in(0,T)$,
\[
\frac{d^+}{d\tau}
\mathscr W_2^2(P_\tau\mu_0,\mu_1)
=
\frac{d}{d\tau}
\mathscr W_2^2(P_\tau\mu_0,\mu_1).
\]
Now, assume that $\Ric_{k,f}^N\geq K$, by combining
\eqref{eq:3.35} with \eqref{eq:3.30}, for almost every $\tau \in (0, T)$, we find
\begin{align*}
\label{eq:3.36}
&\kern-2em \frac12
\frac{d}{d\tau}
\cW_2^2(P_\tau\mu_0,\mu_1) \\
&\leq
H_{k,f}(\mu_1)
-
H_{k,f}(P_\tau\mu_0)
-
\frac{nK}{2k}
\cW_2^2(P_\tau\mu_0,\mu_1)
\\
&\kern2em
+
\frac{n}{k}
\|\nabla f\|_{L^\infty(M)}
\cW_2(P_\tau\mu_0,\mu_1). 
\tag{3.36}
\end{align*}

For $r,s\geq0$, we set
$F(r,s)
\coloneqq
\cW_2^2(P_r\mu_0,P_s\mu_1)$.
By applying \eqref{eq:3.36} with $\mu_1$ replaced by $P_s\mu_1$, for almost every $r \in (0, T)$, we get
\begin{align*}
\label{eq:3.37}
&\kern-2em \frac12
\frac{\partial}{\partial r}F(r,s) \\
&\leq
H_{k,f}(P_s\mu_1)
-
H_{k,f}(P_r\mu_0)
-
\frac{nK}{2k}F(r,s) \\
&\kern2em +
\frac{n}{k}
\|\nabla f\|_{L^\infty(M)}
\sqrt{F(r,s)}.
\tag{3.37}
\end{align*}
By interchanging the roles of $\mu_0$ and $\mu_1$, for almost every $s \in (0, T)$, we similarly
obtain
\begin{align*}
\label{eq:3.38}
&\kern-2em \frac12
\frac{\partial}{\partial s}F(r,s) \\
&\leq
H_{k,f}(P_r\mu_0)
-
H_{k,f}(P_s\mu_1)
-
\frac{nK}{2k}F(r,s) \\
&\kern2em 
+
\frac{n}{k}
\|\nabla f\|_{L^\infty(M)}
\sqrt{F(r,s)}.
\tag{3.38}
\end{align*}

Since the map $(r,s)\mapsto F(r,s)$ is absolutely continuous
along the diagonal, for almost every $\tau \in (0, T)$,
\[
\frac{d}{d\tau}F(\tau,\tau)
=
\frac{\partial}{\partial r}F(r,s)\Big|_{r=s=\tau}
+
\frac{\partial}{\partial s}F(r,s)\Big|_{r=s=\tau}.
\]
By adding inequalities \eqref{eq:3.37} and \eqref{eq:3.38} and then setting
$r=s=\tau$, for almost every $\tau \in (0, T)$, we obtain
\begin{align*}
\label{eq:3.39}
&\kern-2em \frac12
\frac{d}{d\tau}
\cW_2^2(P_\tau\mu_0,P_\tau\mu_1) \\
&\leq
-\frac{nK}{k}
\cW_2^2(P_\tau\mu_0,P_\tau\mu_1)
+
\frac{2n}{k}
\|\nabla f\|_{L^\infty(M)}
\cW_2(P_\tau\mu_0,P_\tau\mu_1).
\tag{3.39}
\end{align*}

Since $\cW_2(P_\tau\mu_0,P_\tau\mu_1)$ is nonnegative and absolutely continuous, one can check that
inequality \eqref{eq:3.39} implies, in the almost-everywhere
sense,
\[
\frac{d}{d \tau} \cW_2(P_\tau\mu_0,P_\tau\mu_1)
\leq
-\frac{nK}{k} \cW_2(P_\tau\mu_0,P_\tau\mu_1)
+
\frac{2n}{k}
\|\nabla f\|_{L^\infty(M)}.
\]

If $K\neq0$, Grönwall's inequality gives
\begin{align*}
\cW_2(P_T \mu_0,P_T \mu_1)
&\leq
e^{-\frac{nK}{k}T}
\cW_2(\mu_0,\mu_1)
+
\frac{2}{K}
\|\nabla f\|_{L^\infty(M)}
\Bigl(
1-e^{-\frac{nK}{k}T}
\Bigr).
\end{align*}
For $K=0$, integration yields
\[
\cW_2(P_T \mu_0,P_T \mu_1)
\leq
\cW_2(\mu_0,\mu_1)
+
\frac{2n}{k}
\|\nabla f\|_{L^\infty(M)}
T.
\]
This finishes the proof.
\end{proof}

\section{Applications}
\label{sec:4}
In this section, we present several applications of our main results. In particular, we generalize the results of Aishwarya--Rotem--Shenfeld \cite{aishwarya2025sectionalcurvature} by deriving intrinsic-dimensional evolution variational inequalities and the corresponding Wasserstein contraction estimates for the heat flow under our weighted intermediate Ricci curvature condition. We then compare our framework with that of Ketterer--Mondino \cite{ketterer_sectional_2018}, prove that the two formulations are equivalent in the unweighted setting, and recover their results as a special case.

\subsection{Contraction of Wasserstein distance along heat flow}
\label{sec:4.1}

\begin{flemma}
Let $(M^n,g,e^{-f}d\Vol_g)$ be a smooth complete weighted Riemannian manifold without boundary satisfying $\Ric^N_{k, f} \geq K$. Let
$\mu_0,\mu_1 \in \cPac(M)$, and let
$(P_\tau)_{\tau\geq0}$ be the heat semigroup on $M$. For each $\tau \geq 0$, let $(\mu_s(\tau))_{s\in[0,1]}$ denote the Wasserstein geodesic from
$P_\tau\mu_0$ to $\mu_1$.
Then, for every $T>0$ and for almost every $\tau \in (0,T)$, we have
\begin{align*}
\label{eq:4.1}
&\kern-2em\frac12
\frac{d}{d\tau}
 \mathscr W_2^2(P_\tau \mu_0,\mu_1) \\
&\leq
\frac{1}{\binom{n-1}{k-1}} \int_M  \tr_{\bigwedge^k T_x M} \Bigl(     \Bigl( \dot \tH^{P_\tau \mu_0 \rightarrow \mu_1 }_{0; k, f} (x) \Bigr)^{[k]} \Bigr)  \, d P_\tau \mu_0 (x)  \\
&\kern2em -\frac{n}{k} \int_M \langle \nabla f, \nabla \theta^{P_\tau \mu_0  \rightarrow \mu_1 } \rangle \, d P_\tau \mu_0, \tag{4.1}
\end{align*}where $-\theta^{P_\tau \mu_0 \rightarrow \mu_1}$ denotes the Kantorovich potential from $P_\tau \mu_0$ to $\mu_1$.
\end{flemma}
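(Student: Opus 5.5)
We start from inequality \eqref{eq:3.32} of Lemma~3.3. It bounds $\frac12\frac{d}{d\tau}\cW_2^2(P_\tau\mu_0,\mu_1)$ for almost every $\tau\in(0,T)$ by two terms: the right derivative $\frac{d^+}{ds}H_{k,f}(\mu_s(\tau))|_{s=0}$, and the drift term $-\frac{n}{k}\int_M\langle\nabla f,\nabla\theta^{P_\tau\mu_0\to\mu_1}\rangle\,dP_\tau\mu_0$. The drift term already appears verbatim in \eqref{eq:4.1}. So the whole task is to identify, or bound from above, the right derivative of the entropy at $s=0$ by
\begin{align*}
\frac{1}{\binom{n-1}{k-1}}\int_M\tr_{\bigwedge^kT_xM}\Bigl(\bigl(\dot\tH^{P_\tau\mu_0\to\mu_1}_{0;k,f}(x)\bigr)^{[k]}\Bigr)\,dP_\tau\mu_0(x).
\end{align*}
The curvature hypothesis enters only through this step.

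For this we use Lemma~3.1 (the trace formula \eqref{eq:3.22}), applied with $\nu_0=P_\tau\mu_0$ and $\nu_1=\mu_1$. It gives, for every $s\in[0,1]$,
\begin{align*}
\frac{H_{k,f}(\mu_s(\tau))-H_{k,f}(\nu_0)}{s}=\frac{1}{\binom{n-1}{k-1}}\int_M\frac1s\tr_{\bigwedge^kT_xM}\Bigl(\bigl(\tH^{\nu_0\to\nu_1}_{s;k,f}(x)\bigr)^{[k]}\Bigr)\,d\nu_0(x).
\end{align*}
Pointwise in $x\in D(\nu_0,\nu_1)$, the integrand $h_x(s)\coloneqq\tr_{\bigwedge^k}\bigl((\tH_{s;k,f})^{[k]}\bigr)$ is smooth in $s$, and $h_x(0)=0$. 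By \eqref{eq:3.21}, which uses $\Ric^N_{k,f}\ge K$ through Corollary~3.1, we have $h_x''\ge b_x\coloneqq\binom nk K|\nabla\theta(x)|^2$. Hence the difference quotient $h_x(s)/s$ differs from $h_x'(0)$ by at least $\frac{b_x}{2}s$, by the monotonicity argument for convex-plus-quadratic functions. Thus
\begin{align*}
\frac{h_x(s)}{s}\ge h_x'(0)+\min(0,\tfrac{K}{2})\tbinom nk|\nabla\theta(x)|^2 s,
\end{align*}
and the quotient is monotone nonincreasing as $s\downarrow0$ after this quadratic correction. By the monotone convergence theorem, applied to the corrected quotients, which form a monotone family with integrable correction since $\int|\nabla\theta|^2\,d\nu_0=\cW_2^2<\infty$, we obtain
\begin{align*}
\frac{d^+}{ds}H_{k,f}(\mu_s(\tau))\Big|_{s=0}=\frac{1}{\binom{n-1}{k-1}}\int_M h_x'(0)\,d\nu_0(x),
\end{align*}
where the value $-\infty$ is allowed. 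Since $h_x'(0)=\tr_{\bigwedge^k}\bigl((\dot\tH_{0;k,f})^{[k]}\bigr)$, with $\dot\tH_{0;k,f}=-\Hess\theta(x)+\frac1k\langle\nabla f,\nabla\theta\rangle\Id$, this is exactly the desired term. Substituting into \eqref{eq:3.32} gives \eqref{eq:4.1}.

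The main obstacle is the interchange of $\lim_{s\downarrow0}$ and $\int d\nu_0$. Here $\theta$ is only semiconvex, so $h_x'(0)$ is merely the Alexandrov Hessian trace and need not be integrable. The monotonicity from the curvature lower bound is what makes monotone convergence applicable; we only need the inequality ``$\le$'', and that direction follows from Fatou-type reasoning on the monotone corrected quotients. A secondary point is that \eqref{eq:3.32} was stated on closed manifolds. In the complete setting we would assume the same evolution variational inequality for the heat flow, or restrict to $\tau$ where the Wasserstein derivative exists, as in Lemma~3.3.
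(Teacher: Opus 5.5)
Your proposal is correct and is essentially the paper's proof. Both arguments combine four ingredients: the heat-flow EVI together with the first variation of $\int f\,d\mu_s(\tau)$, which is exactly how (3.32) is obtained and how the paper re-derives it inline; the trace formula of Lemma~3.1; the convexity of $s\mapsto \tr_{\bigwedge^kT_xM}\bigl((\tH_{s;k,f})^{[k]}\bigr)-\frac K2\binom nk|\nabla\theta(x)|^2s^2$, which comes from Corollary~3.1; and this convexity is then used to exchange $\limsup_{s\downarrow0}$ with $\int\,dP_\tau\mu_0$. The one difference is the limit-exchange tool. The paper uses reverse Fatou with the chord majorant $\tr_{\bigwedge^kT_xM}\bigl((\tH_{1;k,f})^{[k]}\bigr)-\frac K2\binom nk|\nabla\theta|^2$. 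You use monotone convergence for the decreasing corrected quotients, which needs the same integrability of the top quotient (you leave this implicit) and yields equality rather than only an upper bound.
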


\begin{proof}
For notational simplicity, for $u \in [0, 1]$, we write
\[
A_u(x)
\coloneqq
\Bigl(
\tH_{u;k,f}^{P_\tau\mu_0\rightarrow\mu_1}(x)
\Bigr)^{[k]}.
\]First, since the heat flow is the Wasserstein gradient flow of the
Boltzmann entropy $H$, its evolution variational inequality yields
\begin{align*}
\label{eq:4.2}
\frac12
\frac{d^+}{d\tau}
\mathscr W_2^2(P_\tau\mu_0,\mu_1)
\le
\frac{d^+}{ds}
H(\mu_s(\tau))
\Big|_{s=0}, \tag{4.2}
\end{align*}
where $\frac{d^+}{dt}$ denotes the upper right derivative.
Since
$\tau\mapsto\mathscr W_2^2(P_\tau\mu_0,\mu_1)$
is absolutely continuous, for every $T>0$ and for almost every
$\tau\in(0,T)$,
\[
\frac{d^+}{d\tau}
\mathscr W_2^2(P_\tau\mu_0,\mu_1)
=
\frac{d}{d\tau}
\mathscr W_2^2(P_\tau\mu_0,\mu_1).
\]

On the other hand, using the relation between $H$ and $H_{k, f}$ together with the first variation formula for $\int_M f d \mu_s(\tau)$, we obtain
\begin{align*}
\label{eq:4.3}
&\kern-2em \frac{d^+}{ds}
H_{k,f}(\mu_s(\tau))
\Big|_{s=0} \\
&=
\frac{d^+}{ds}
\Bigl(
H(\mu_s(\tau))
+
\frac nk
\int_M
f\,d\mu_s(\tau)
\Bigr)
\Big|_{s=0}
\\
&=
\frac{d^+}{ds}
H(\mu_s(\tau))
\Big|_{s=0}
+
\frac nk
\frac{d}{ds}
\int_M
f\,d\mu_s(\tau)
\Big|_{s=0}
\\
&=
\frac{d^+}{ds}
H(\mu_s(\tau))
\Big|_{s=0}
+
\frac nk
\int_M
\bigl\langle
\nabla f,
\nabla\theta^{P_\tau\mu_0\rightarrow\mu_1}
\bigr\rangle
\,dP_\tau\mu_0. \tag{4.3}
\end{align*}

By the definition of upper right derivative and Lemma~\hyperlink{L:3.1}{3.1}, we have
\begin{align*}
\label{eq:4.4}
&\kern-2em
\frac{d^+}{ds}
H_{k,f}(\mu_s(\tau))
\Big|_{s=0}
\\
&=
\limsup_{\delta\downarrow0}
\frac{
H_{k,f}(\mu_\delta(\tau))
-
H_{k,f}(\mu_0(\tau))
}{\delta}
\\
&=
\limsup_{\delta\downarrow0}
\frac1{\binom{n-1}{k-1}}
\int_M
\frac{
\tr_{\bigwedge^kT_xM}
\bigl(
A_\delta(x)
\bigr)
-
\tr_{\bigwedge^kT_xM}
\bigl(
A_0(x)
\bigr)
}{\delta}
\,dP_\tau\mu_0(x). \tag{4.4}
\end{align*}

Furthermore, by Corollary~\hyperlink{C:3.1}{3.1}, we have
\begin{align*}
&\kern-2em \frac{d^2}{dt^2}
\biggl[
A_t(x)
-
\frac K2
t^2
|\nabla\theta^{P_\tau \mu_0 \rightarrow \mu_1}(x)|^2
\Id
\biggr] \\
&=
\Bigl(
\ddot{\tH}_{t;k,f}^{P_\tau\mu_0\rightarrow\mu_1}(x)
\Bigr)^{[k]}
-
K
|\nabla\theta^{P_\tau \mu_0 \rightarrow \mu_1}(x)|^2
\Id
\succeq \frac{1}{N-k} df(\dot \gamma(t))^2 \Id \succeq 0,
\end{align*}
where $\succeq$ denotes the Loewner order.
Hence, we obtain 
\begin{align*}
&\kern-2em
\tr_{\bigwedge^kT_xM}
\bigl(
A_\delta(x)
\bigr)
\\
&\le
\frac{K\delta^2}{2}
\binom nk
|\nabla\theta^{P_\tau \mu_0 \rightarrow \mu_1}(x)|^2
+
(1-\delta)
\tr_{\bigwedge^kT_xM}
\bigl(
A_0(x)
\bigr)
\\
&\kern2em +
\delta
\biggl[
\tr_{\bigwedge^kT_xM}
\bigl(
A_1(x)
\bigr)
-
\frac K2
\binom nk
|\nabla\theta^{P_\tau \mu_0 \rightarrow \mu_1}(x)|^2
\biggr] \\
&= \frac{K\delta^2}{2}
\binom nk
|\nabla\theta^{P_\tau \mu_0 \rightarrow \mu_1}(x)|^2 \\
&\kern2em +
\delta
\biggl[
\tr_{\bigwedge^kT_xM}
\bigl(
A_1(x)
\bigr)
-
\frac K2
\binom nk
|\nabla\theta^{P_\tau \mu_0 \rightarrow \mu_1}(x)|^2
\biggr].
\end{align*}
Since the terms $|\nabla \theta^{P_\tau \mu_0 \rightarrow \mu_1}|^2$ and
\begin{align*}
&\kern-2em \tr_{\bigwedge^kT_xM}
\bigl(
A_1(x)
\bigr)
-
\frac K2
\binom nk
|\nabla\theta^{P_\tau \mu_0 \rightarrow \mu_1}(x)|^2 \\
&=
\tr_{\bigwedge^kT_xM}
\Bigl(
\bigl(
\tH_{1;k,f}^{P_\tau\mu_0\rightarrow\mu_1}(x)
\bigr)^{[k]}
\Bigr)
-
\frac K2
\binom nk
|\nabla\theta^{P_\tau \mu_0 \rightarrow \mu_1}(x)|^2
\end{align*}
are $P_\tau \mu_0$-integrable, by reverse Fatou's lemma, equation \eqref{eq:4.4} becomes
\begin{align*}
\label{eq:4.5}
&\kern-2em
\frac{d^+}{ds}
H_{k,f}(\mu_s(\tau))
\Big|_{s=0}
\\
&\le
\frac1{\binom{n-1}{k-1}}
\int_M
\limsup_{\delta\downarrow0}
\frac{
\tr_{\bigwedge^kT_xM}
\bigl(
A_\delta(x)
\bigr)
-
\tr_{\bigwedge^kT_xM}
\bigl(
A_0(x)
\bigr)
}{\delta}
\,dP_\tau\mu_0(x)
\\
&=
\frac1{\binom{n-1}{k-1}}
\int_M
\tr_{\bigwedge^kT_xM}
\Bigl(
\bigl(
\dot{\tH}_{0;k,f}^{P_\tau\mu_0\rightarrow\mu_1}(x)
\bigr)^{[k]}
\Bigr)
\,dP_\tau\mu_0(x). \tag{4.5}
\end{align*}
By combining inequalities \eqref{eq:4.2}, \eqref{eq:4.3}, and \eqref{eq:4.5}, this finishes the proof.
\end{proof}

\begin{frmk}
Assume that \(\Ric_{k,f}^N\ge K\). Then the estimate
\eqref{eq:4.5} is stronger than \eqref{eq:3.30}. Indeed, by
integrating \eqref{eq:3.16} twice with respect to the interpolation
parameter \(t\), taking the trace over \(\bigwedge^kT_xM\), and then
integrating with respect to the initial measure \(P_\tau\mu_0\), we
obtain
\begin{align*}
&\kern-2em \frac{1}{\binom{n-1}{k-1}}
\int_M
\tr_{\bigwedge^kT_xM}
\Bigl(
\bigl(
\dot{\tH}^{P_\tau \mu_0\rightarrow\mu_1}_{0;k,f}(x)
\bigr)^{[k]}
\Bigr)
\,dP_\tau \mu_0(x) \\
&\kern2em\le
H_{k,f}(\mu_1)
-
H_{k,f}(P_\tau \mu_0)
-
\frac{nK}{2k}
\cW_2^2(P_\tau \mu_0,\mu_1).
\end{align*}
Substituting this estimate into \eqref{eq:4.5} immediately recovers
\eqref{eq:3.30}. Thus, the tensorial formulation implies the scalar
displacement convexity inequality as a direct consequence.
\end{frmk}

From now on, we consider the case that $M$ is a smooth compact manifold without boundary and the case $K = 0$, that is, $\Ric^N_{k, f} \geq 0$. We obtain the following intrinsic dimensional evolution inequality.

\hypertarget{T:4.1}{
\begin{fthm} 
Let $(M^n,g,e^{-f}d\Vol_g)$ be a smooth compact weighted Riemannian manifold without boundary satisfying $\Ric^N_{k, f} \geq 0$. Let $\mu_0,\mu_1 \in \cPac(M)$, and let $(P_\tau)_{\tau\geq0}$ be the heat semigroup on $M$. Then, for every $T>0$ and for almost every
$\tau\in(0,T)$, we have
\begin{align*}
\label{eq:4.6}
&\kern-2em
\frac{d}{d\tau}\mathscr W_2^2(P_\tau\mu_0,\mu_1)
\\
&\le
\frac{2nN}{k}
-
\frac{2N}{\binom{n-1}{k-1}}
\int_M
\sigma_k \biggl(
\exp\biggl(
-\frac{\tH_{1;k,f}^{P_\tau\mu_0\rightarrow\mu_1}(x)}{N}
\biggr)
\biggr)
\,dP_\tau\mu_0(x)
\\
&\kern2em
-
\frac{2n}{k}
\int_M
\left\langle
\nabla f,
\nabla\theta^{P_\tau\mu_0\rightarrow\mu_1}
\right\rangle
\,dP_\tau\mu_0. \tag{4.6}
\end{align*}
\end{fthm}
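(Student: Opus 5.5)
The natural starting point is Lemma 4.1 with $K=0$. It already gives, for almost every $\tau$, a bound on $\frac12\frac{d}{d\tau}\cW_2^2(P_\tau\mu_0,\mu_1)$ by
\[
\frac{1}{\binom{n-1}{k-1}}\int_M \tr_{\bigwedge^kT_xM}\Bigl(\bigl(\dot\tH_{0;k,f}^{P_\tau\mu_0\to\mu_1}(x)\bigr)^{[k]}\Bigr)\,dP_\tau\mu_0(x)
\]
plus the drift term $-\frac nk\int\langle\nabla f,\nabla\theta\rangle$. After multiplying by $2$, the drift term already matches the last line of \eqref{eq:4.6}. The whole task is therefore a pointwise estimate: for each $x\in D(P_\tau\mu_0,\mu_1)$ I want
\[
\tr_{\bigwedge^kT_xM}\Bigl(\bigl(\dot\tH_{0;k,f}(x)\bigr)^{[k]}\Bigr)\le N\binom nk-N\,\sigma_k\Bigl(\exp\bigl(-\tH_{1;k,f}(x)/N\bigr)\Bigr).
\]
Multiplying by $2/\binom{n-1}{k-1}$ and using $\binom nk/\binom{n-1}{k-1}=n/k$ then gives exactly the constant $\frac{2nN}{k}$ and the $\sigma_k$ term in \eqref{eq:4.6}.

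To prove the pointwise bound, I work one decomposable unit $k$-vector at a time. Fix $x$ and an orthonormal basis $\{e_i\}$ of $T_xM$. For each multi-index $I$ with $|I|=k$, set $\xi_I=e_{i_1}\wedge\cdots\wedge e_{i_k}$ and
\[
h_I(t):=\bigl\langle(\tH_{t;k,f}(x))^{[k]}\xi_I,\xi_I\bigr\rangle=\tr\bigl(\tH_{t;k,f}(x)|_{\Sigma_I}\bigr).
\]
By Corollary 3.1(6) with $K=0$, the function $u_I(t)=e^{-h_I(t)/N}$ is concave on $[0,1]$. Concavity gives $u_I(1)\le u_I(0)+\dot u_I(0)$. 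Since $h_I(0)=0$, we have $u_I(0)=1$ and $\dot u_I(0)=-\dot h_I(0)/N$. This yields
\[
\dot h_I(0)\le N\bigl(1-e^{-h_I(1)/N}\bigr).
\]
Summing over all $I$, the left side becomes $\tr_{\bigwedge^k}\bigl((\dot\tH_{0;k,f})^{[k]}\bigr)$, because the $\xi_I$ form an orthonormal basis of $\bigwedge^kT_xM$. The right side becomes
\[
N\binom nk-N\sum_{|I|=k}\exp\Bigl(-\sum_{i\in I}\langle e_i,\tH_{1;k,f}e_i\rangle/N\Bigr).
\]
Applying Lemma 2.5 (the Schur--Horn bound \eqref{eq:2.45}) with $A=\tH_{1;k,f}(x)/N$, this sum is at most $\sigma_k\bigl(e^{-\tH_{1;k,f}/N}\bigr)$, which gives the claimed pointwise inequality. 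A convenient choice is to take $\{e_i\}$ to be an eigenbasis of $\tH_{1;k,f}$, in which case the bound is an equality. I will use $e^{-\tH/N}$ in the sense of a matrix exponential, whose eigenvalues are the exponentials of those of $-\tH/N$.

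The main obstacle is regularity. I need to justify that $t\mapsto\tH_{t;k,f}(x)$ is $C^2$ on $[0,1]$ for $x\in D$, so that the derivative at $0$ exists and the concavity step is legitimate. I also need the first term in Lemma 4.1 to be the genuine derivative $\dot\tH_0=-\tU_{0;k,f}=-\Hess\theta+\frac1k\langle\nabla f,\nabla\theta\rangle\Id$, with the integrals finite. Both follow from the Jacobi field matrix $\tJ_t$ being smooth and invertible on $[0,1]$, since $\gamma$ is minimizing and $F(x)\notin\operatorname{cut}(x)$, together with the integrability already used in the reverse Fatou step of Lemma 4.1. Compactness of $M$ keeps $f$ and $\nabla f$ bounded, so no further integrability issues arise.
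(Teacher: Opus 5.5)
Your route is the same as the paper's proof of Theorem 4.1. You start from Lemma 4.1 with $K=0$. You then use concavity of $u_I=e^{-h_I/N}$ from Corollary 3.1(6) to get $\dot h_I(0)\le N\bigl(1-e^{-h_I(1)/N}\bigr)$, sum over the orthonormal basis $\{\xi_I\}$ of $\bigwedge^kT_xM$, and use $\binom nk/\binom{n-1}{k-1}=n/k$.

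One correction: your appeal to Lemma 2.5 runs the wrong way. That lemma says
\[
\sum_{|I|=k}\exp\Bigl(-\sum_{i\in I}\langle e_i,\tH_{1;k,f}e_i\rangle/N\Bigr)\le\sigma_k\bigl(e^{-\tH_{1;k,f}/N}\bigr).
\]
So for a general basis the right-hand side $N\binom nk-N\sum_I(\cdots)$ is \emph{larger} than $N\binom nk-N\sigma_k(\cdots)$, and you only get a weaker bound; the sentence ``which gives the claimed pointwise inequality'' is false as stated. The needed inequality follows only because you may take $\{e_i\}$ to be an eigenbasis of $\tH_{1;k,f}(x)$, where equality holds. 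This is legitimate because the trace over $\bigwedge^kT_xM$ does not depend on the basis. That choice is necessary, not merely convenient, and it is exactly what the paper does; Schur--Horn only tells you that the eigenbasis is the optimal choice.
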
}

\begin{proof}
For notational simplicity, for $u \in [0, 1]$, we write
\[
A_u(x)
\coloneqq
\Bigl(
\tH_{u;k,f}^{P_\tau\mu_0\rightarrow\mu_1}(x)
\Bigr)^{[k]}.
\]
For each $x \in M$, we choose an orthonormal basis $\{e_i(x) \}_{i =1}^n$ of $T_x M$. For each multi-index
$I \coloneqq \{i_1<\cdots<i_k\} \subset\{1,\dots,n\}$, we let
\[
\xi_I(x)
\coloneqq
e_{i_1}(x)\wedge\cdots\wedge e_{i_k}(x).
\]
Then, by Lemma~\hyperlink{L:4.1}{4.1}, for almost every $\tau \in (0, T)$, we have
\begin{align*}
\label{eq:4.7}
&\kern-2em
\frac12\frac{d}{d\tau}
\mathscr W_2^2(P_\tau\mu_0,\mu_1)
\\
&\le
\frac1{\binom{n-1}{k-1}}
\int_M
\sum_{|I|=k}
\bigl\langle
\dot A_0(x)\xi_I(x),
\xi_I(x)
\bigr\rangle
\,dP_\tau\mu_0(x) \\
&\kern2em -
\frac nk
\int_M
\bigl\langle
\nabla f,
\nabla\theta^{P_\tau\mu_0\rightarrow\mu_1}
\bigr\rangle
\,dP_\tau\mu_0. \tag{4.7}
\end{align*}

Fix $x \in M$ and a decomposable unit $k$-vector $\xi=\xi_I(x)$, and define
\[
c_\xi(s)
\coloneqq
\exp \Bigl (   -\frac1N\langle A_s(x)\xi,\xi\rangle      \Bigr)
\]for $s \in [0, 1]$.
By Corollary~\hyperlink{C:3.1}{3.1}, the function $c_\xi$ is concave on $[0, 1]$. Hence, we obtain 
\[
c_\xi'(0)\ge c_\xi(1)-c_\xi(0).
\]
Since $A_0(x)=0$, we have $c_\xi(0)=1$, consequently, we obtain 
\begin{align*}
\label{eq:4.8}
1
-
\exp \Bigl ( {-\frac1N\langle A_1(x)\xi,\xi\rangle} \Bigr)
\ge
\frac1N
\bigl\langle
\dot A_0(x)\xi,
\xi
\bigr\rangle. \tag{4.8}
\end{align*}

Applying estimate \eqref{eq:4.8} to each decomposable unit $k$-vector $\xi_I$ in inequality \eqref{eq:4.7} yields
\begin{align*}
\label{eq:4.9}
&\kern-2em
\frac12\frac{d}{d\tau}
\mathscr W_2^2(P_\tau\mu_0,\mu_1)
\\
&\le
\frac N{\binom{n-1}{k-1}}
\int_M
\sum_{|I|=k}
\Bigl(
1-
\exp \Bigl( {-\frac1N\langle A_1(x)\xi_I,\xi_I\rangle} \Bigr)
\Bigr)
\,dP_\tau\mu_0(x)
\\
&\kern2em
-
\frac nk
\int_M
\bigl\langle
\nabla f,
\nabla\theta^{P_\tau\mu_0\rightarrow\mu_1}
\bigr\rangle
\,dP_\tau\mu_0 \tag{4.9}
\end{align*}for almost every $\tau \in (0, T)$. For each $x \in M$, we now choose $\{e_i(x)\}_{i=1}^n$ to be an orthonormal eigenbasis of
$\tH_{1;k,f}^{P_\tau\mu_0\rightarrow\mu_1}(x)$, with corresponding
eigenvalues $\lambda_1(x),\dots,\lambda_n(x)$. Then, we have
\[
\sum_{|I|=k}
\exp \Bigl({-\frac1N\langle A_1(x)\xi_I(x),\xi_I(x)\rangle} \Bigr)
=
\sigma_k\Bigl(
\exp\Bigl(
-\frac{\tH_{1;k,f}^{P_\tau\mu_0\rightarrow\mu_1}(x) }N
\Bigr)
\Bigr),
\]
and hence we get
\begin{align*}
\label{eq:4.10}
&\kern-2em \sum_{|I|=k}
\Bigl(
1-
\exp \Bigl( {-\frac1N\langle A_1(x)\xi_I(x),\xi_I(x)\rangle} \Bigr)
\Bigr) \\
&=
\binom nk
-
\sigma_k\Bigl(
\exp\Bigl(
-\frac{\tH_{1;k,f}^{P_\tau\mu_0\rightarrow\mu_1}(x)}N
\Bigr)
\Bigr). \tag{4.10}
\end{align*}
By substituting \eqref{eq:4.9} into inequality \eqref{eq:4.10}, for almost every $\tau \in (0, T)$, we obtain 
\begin{align*}
&\kern-2em
\frac12\frac{d}{d\tau}
\mathscr W_2^2(P_\tau\mu_0,\mu_1)
\\
&\le
\frac{nN}{k}
-
\frac N{\binom{n-1}{k-1}}
\int_M
\sigma_k\biggl(
\exp\biggl(
-\frac{\tH_{1;k,f}^{P_\tau\mu_0\rightarrow\mu_1}(x)}N
\biggr)
\biggr)
\,dP_\tau\mu_0(x)
\\
&\kern2em
-
\frac nk
\int_M
\bigl\langle
\nabla f,
\nabla\theta^{P_\tau\mu_0\rightarrow\mu_1}
\bigr\rangle
\,dP_\tau\mu_0.
\end{align*}
This finishes the proof.
\end{proof}

The preceding theorem gives a one-sided differential estimate for the Wasserstein distance between the heat flow $P_\tau\mu_0$ and a fixed target measure $\mu_1$. To obtain a corresponding estimate for the distance between two heat flows, we will apply this result in both transport directions. We therefore first establish the following time-reversal relation between the weighted transport Hessians associated with the forward and reversed Wasserstein geodesics.

\hypertarget{L:4.2}{
\begin{flemma}
Fix $\nu_0,\nu_1\in\cPac(M)$, and let
\[
F_t^{\nu_0\to\nu_1}(x)
\coloneqq
\exp_x\bigl(t\nabla\theta^{\nu_0\to\nu_1}(x)\bigr),
\qquad t\in[0,1],
\]
be the Wasserstein geodesic map from $\nu_0$ to $\nu_1$. Define
\[
\widetilde D(\nu_0,\nu_1)
\coloneqq
\Bigl\{
x\in D(\nu_0,\nu_1)
:
F_1^{\nu_0\to\nu_1}(x)\in D(\nu_1,\nu_0)
\Bigr\}.
\]
Then, after choosing compatible orthonormal parallel frames along the
forward and reversed geodesics, for $\nu_0$-almost every
$x\in \widetilde D(\nu_0,\nu_1)$ and every $t\in[0,1]$, we have
\[
\tH_{t;k,f}^{\nu_1\to\nu_0}
\Bigl(
F_1^{\nu_0\to\nu_1}(x)
\Bigr)
=
\tH_{1-t;k,f}^{\nu_0\to\nu_1}(x)
-
\tH_{1;k,f}^{\nu_0\to\nu_1}(x).
\]
In particular, by taking $t=1$, we have
\begin{align*}
\label{eq:4.11}
\tH_{1;k,f}^{\nu_1\to\nu_0}
\Bigl(
F_1^{\nu_0\to\nu_1}(x)
\Bigr)
=
-
\tH_{1;k,f}^{\nu_0\to\nu_1}(x). \tag{4.11}
\end{align*}
\end{flemma}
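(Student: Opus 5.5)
The plan is to reduce everything to the Jacobi matrices along a single geodesic traversed in both directions.

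\textbf{Step 1: the reversed map is the inverse map.} Fix $x\in\widetilde D(\nu_0,\nu_1)$ and put $y\coloneqq F_1^{\nu_0\to\nu_1}(x)$. Let $\gamma(s)=\exp_x(s\nabla\theta^{\nu_0\to\nu_1}(x))$. By uniqueness of optimal maps (McCann's theorem and the subsequent remark), $F_1^{\nu_1\to\nu_0}$ is the $\nu_1$-a.e.\ inverse of $F_1^{\nu_0\to\nu_1}$. Hence, after discarding a $\nu_0$-null set, the reversed geodesic is $\bar\gamma(s)\coloneqq F_s^{\nu_1\to\nu_0}(y)=\gamma(1-s)$, and $\nabla\theta^{\nu_1\to\nu_0}(y)=-\dot\gamma(1)$.

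\textbf{Step 2: identify the reversed Jacobi matrix.} Take the reversed parallel frame to be $\bar e^i(s)=e^i(1-s)$; this is the compatible choice of frames. I claim that
\[
\bar{\tJ}(s)=\tJ(1-s)\tJ(1)^{-1}.
\]
In this frame, $s\mapsto\tJ(1-s)$ solves the same Jacobi equation \eqref{eq:2.15}, since $\tR$ is quadratic in $\dot\gamma$ and is therefore unchanged under reversal. The proposed matrix equals $\Id$ at $s=0$, and its derivative at $0$ is $-\tU_1(x)$. It remains to check $\dot{\bar\tJ}(0)=\Hess\theta^{\nu_1\to\nu_0}(y)$, i.e.\ $\Hess\theta^{\nu_1\to\nu_0}(y)=-\tU_1(x)$.

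This identity is the main obstacle. My plan is to differentiate the a.e.\ identity $F_1^{\nu_1\to\nu_0}\circ F_1^{\nu_0\to\nu_1}=\mathrm{id}$, which holds at Alexandrov points of both potentials. The differential of $F_1$ at $x$, written in the parallel frame, is $\tJ(1)$. For $\tilde F(z)=\exp_z(\nabla\tilde\theta(z))$ at $y$, the differential in the frame along $\bar\gamma$ is $\bar{\tJ}(1)$, where $\bar{\tJ}$ is the Jacobi matrix with initial data $(\Id,\Hess\tilde\theta(y))$. Then $\bar{\tJ}(1)\tJ(1)=\Id$. On the other hand, the Jacobi matrix $s\mapsto\tJ(1-s)\tJ(1)^{-1}$ takes the value $\Id$ at $s=0$ and the value $\tJ(1)^{-1}$ at $s=1$. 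Since $\gamma$ has no conjugate points ($\tJ$ is invertible), a Jacobi field is determined by its endpoint values. The two Jacobi matrices therefore coincide, which gives the initial-derivative identity. Differentiating the composed map in the Alexandrov (a.e.) sense requires care; I would cite the a.e.\ differentiability and inverse relation from \cite[Theorem 4.2]{cordero-erausquin_riemannian_2001} and \cite{villani2009}.

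\textbf{Step 3: compute $\bar{\tU}$.} From Step 2,
\[
\bar\tU_s=\dot{\bar\tJ}(s)\bar\tJ(s)^{-1}=-\dot\tJ(1-s)\tJ(1-s)^{-1}=-\tU_{1-s}(x).
\]
The weight term reverses sign in the same way, since $\langle\nabla f,\dot{\bar\gamma}(s)\rangle=-\langle\nabla f,\dot\gamma(1-s)\rangle$. Hence $\bar\tU_{s;k,f}=-\tU_{1-s;k,f}$.

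\textbf{Step 4: integrate.} Integrating Step 3 gives
\[
\tH^{\nu_1\to\nu_0}_{t;k,f}(y)=\int_0^t\tU_{1-s;k,f}\,ds=\int_{1-t}^1\tU_{r;k,f}\,dr=\tH_{1-t;k,f}^{\nu_0\to\nu_1}(x)-\tH_{1;k,f}^{\nu_0\to\nu_1}(x).
\]
Setting $t=1$ yields \eqref{eq:4.11}. As a consistency check, the weight part can be verified directly from \eqref{eq:2.64}: $\tfrac1k(f(\gamma(1-t))-f(\gamma(1)))$ matches the right-hand side.
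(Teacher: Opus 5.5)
Your proposal is correct and follows the paper's route: inverse optimal maps, the compatible frame $\bar e(s)=e(1-s)$, the reversal identity for Jacobi matrices, $\bar{\tU}_s=-\tU_{1-s}$, and then integration. The one difference is at the key step. The paper cites Claim~3.7 of \cite{aishwarya2025sectionalcurvature} for $\tJ^{\nu_1\to\nu_0}_t(y)=\tJ^{\nu_0\to\nu_1}_{1-t}(x)\,\tJ^{\nu_1\to\nu_0}_1(y)$, which is equivalent to your $\bar{\tJ}(s)=\tJ(1-s)\tJ(1)^{-1}$. You instead sketch a proof of it from $\bar{\tJ}(1)\tJ(1)=\Id$ and endpoint uniqueness of Jacobi fields, and that uniqueness is legitimate: it follows from $F(x)\notin\operatorname{cut}(x)$, or from the fact that a Jacobi matrix with symmetric $\tU$ that is invertible on $[0,1]$ excludes conjugate points.
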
}

\begin{proof}
Fix $x\in \widetilde D(\nu_0,\nu_1)$ and set
$y\coloneqq F_1^{\nu_0\to\nu_1}(x)$.
For $\nu_0$-almost every such $x$, the optimal transport maps
$F_1^{\nu_0\to\nu_1}$ and $F_1^{\nu_1\to\nu_0}$ are inverse to each other.
Hence, for every $t\in[0,1]$, we have
\begin{align*}
\label{eq:4.12}
F_t^{\nu_0\to\nu_1}(x)
=
F_{1-t}^{\nu_1\to\nu_0}(y). \tag{4.12}
\end{align*}

Now, we choose an orthonormal basis
$e_0^{\nu_0\to\nu_1}(x)$ of $T_xM$ and parallel transport it along
$F_t^{\nu_0\to\nu_1}(x)$ to obtain
$e_t^{\nu_0\to\nu_1}(x)$. For the reversed geodesic, we choose the initial
frame at $y$ by
\[
e_0^{\nu_1\to\nu_0}(y)
\coloneqq
e_1^{\nu_0\to\nu_1}(x),
\]
and parallel transport it along
$F_t^{\nu_1\to\nu_0}(y)$. Then, we have
$e_t^{\nu_1\to\nu_0}(y)
=
e_{1-t}^{\nu_0\to\nu_1}(x)$.

Let $\tJ_t^{\nu_0\to\nu_1}(x)$ and
$\tJ_t^{\nu_1\to\nu_0}(y)$ be the corresponding Jacobi matrices in these
parallel frames. Since the two geodesics are the same curve with opposite
orientation, the curvature matrices satisfy
\[
\tR_t^{\nu_1\to\nu_0}(y)
=
\tR_{1-t}^{\nu_0\to\nu_1}(x).
\]
Moreover, by Claim~3.7 of \cite{aishwarya2025sectionalcurvature}, for any $x \in D(\nu_0, \nu_1)$ satisfying $F_1^{\nu_0 \rightarrow \nu_1}(x) \in D(\nu_1, \nu_0)$ and for all $t \in [0, 1]$, we have

\[
\tJ_t^{\nu_1\to\nu_0}(y)
=
\tJ_{1-t}^{\nu_0\to\nu_1}(x)
\tJ_1^{\nu_1\to\nu_0}(y).
\]

Therefore, we have
\begin{align*}
\label{eq:4.13}
\tU_t^{\nu_1\to\nu_0}(y)
&=
\dot{\tJ}_t^{\nu_1\to\nu_0}(y)
\Bigl[
\tJ_t^{\nu_1\to\nu_0}(y)
\Bigr]^{-1}
\\
&=
-
\dot{\tJ}_{1-t}^{\nu_0\to\nu_1}(x)
\tJ_1^{\nu_1\to\nu_0}(y)
\Bigl[
\tJ_{1-t}^{\nu_0\to\nu_1}(x)
\tJ_1^{\nu_1\to\nu_0}(y)
\Bigr]^{-1}
\\
&=
-
\dot{\tJ}_{1-t}^{\nu_0\to\nu_1}(x)
\Bigl[
\tJ_{1-t}^{\nu_0\to\nu_1}(x)
\Bigr]^{-1}
=
-\tU_{1-t}^{\nu_0\to\nu_1}(x).
\tag{4.13}
\end{align*}

Using the convention
$\tH_t^{\nu_0\to\nu_1}(x)
=
-\int_0^t
\tU_s^{\nu_0\to\nu_1}(x)
\,ds$,
equation \eqref{eq:4.13} gives
\begin{align*}
\label{eq:4.14}
\tH_t^{\nu_1\to\nu_0}(y)
&=
-\int_0^t
\tU_s^{\nu_1\to\nu_0}(y)
\,ds
=
\int_0^t
\tU_{1-s}^{\nu_0\to\nu_1}(x)
\,ds =
\int_{1-t}^1
\tU_r^{\nu_0\to\nu_1}(x)
\,dr
\\
&=
\tH_{1-t}^{\nu_0\to\nu_1}(x)
-
\tH_1^{\nu_0\to\nu_1}(x).
\tag{4.14}
\end{align*}

Finally, by definition of the weighted quantity, we have
\begin{align*}
\label{eq:4.15}
\tH_{t;k,f}^{\nu_0\to\nu_1}(x)
=
\tH_t^{\nu_0\to\nu_1}(x)
+
\frac{
f\bigl(F_t^{\nu_0\to\nu_1}(x)\bigr)-f(x)
}{k}\Id. \tag{4.15}
\end{align*}
By equations \eqref{eq:4.12}, \eqref{eq:4.14}, and \eqref{eq:4.15},
we get
\begin{align*}
\tH_{t;k,f}^{\nu_1\to\nu_0}(y)
&=
\tH_t^{\nu_1\to\nu_0}(y)
+
\frac{
f\bigl(F_t^{\nu_1\to\nu_0}(y)\bigr)-f(y)
}{k}\Id
\\
&=
\tH_{1-t}^{\nu_0\to\nu_1}(x)
-
\tH_1^{\nu_0\to\nu_1}(x)
+
\frac{
f\bigl(F_{1-t}^{\nu_0\to\nu_1}(x)\bigr)
-
f\bigl(F_1^{\nu_0\to\nu_1}(x)\bigr)
}{k}\Id
\\
&=
\tH_{1-t;k,f}^{\nu_0\to\nu_1}(x)
-
\tH_{1;k,f}^{\nu_0\to\nu_1}(x).
\end{align*}
This finishes the proof.
\end{proof}

We are now ready to combine Theorem~\hyperlink{T:4.1}{4.1} and Lemma~\hyperlink{L:4.2}{4.2} to derive the following intrinsic-dimensional Wasserstein contraction along heat flows, which reflects the effective dimension underlying the transport of probability measures more precisely than the classical formulation in \cite{von_renesse_transport_2005, bolley2015equivalence}.

\hypertarget{C:4.1}{
\begin{fcor}
    Let $(M^n,g,e^{-f}d\Vol_g)$ be a smooth compact weighted Riemannian manifold without boundary satisfying $\Ric^N_{k, f} \geq 0$. Let $\mu_0,\mu_1 \in \cPac(M)$, and let $(P_\tau)_{\tau\geq0}$ be the heat semigroup on $M$. Then, for every $T > 0$, we have
    \begin{align*}
    \label{eq:4.16}
        &\kern-2.5em \cW^2_2(P_T \mu_0, P_T \mu_1) - \Bigl[ \cW_2(\mu_0, \mu_1) +\frac{2nT}{k} \| \nabla f \|_{L^\infty} \Bigr]^2 \\
        &\kern-0.5em \leq   - \frac{8N}{\binom{n-1}{k-1}} \int_0^T \int_M  \tr_{\bigwedge^k T_x M} \biggl[  \sinh^2 \biggl( \frac{(\tH_{1; k, f}^{P_\tau \mu_0 \rightarrow P_\tau \mu_1}(x))^{[k]}}{2N}    \biggr) \biggr] \, d P_\tau \mu_0(x) \, d \tau. \tag{4.16}
    \end{align*}
\end{fcor}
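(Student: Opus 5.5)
We will derive the estimate by running the two-sided doubling argument from the proof of Lemma~3.4, this time feeding it the intrinsic-dimensional inequality of Theorem~\hyperlink{T:4.1}{4.1} in place of the scalar estimate \eqref{eq:3.36}. As there, set $F(r,s)\coloneqq\cW_2^2(P_r\mu_0,P_s\mu_1)$.

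\emph{Step 1: the two partial derivatives.} Applying Theorem~\hyperlink{T:4.1}{4.1} with $\mu_1$ replaced by $P_s\mu_1$ bounds $\partial_rF(r,s)$ for a.e. $r$. The bound is
\[
\frac{2nN}{k}-\frac{2N}{\binom{n-1}{k-1}}\int_M\sigma_k\Bigl(e^{-\tH_1/N}\Bigr)\,dP_r\mu_0-\frac{2n}{k}\int_M\langle\nabla f,\nabla\theta\rangle\,dP_r\mu_0,
\]
where $\tH_1=\tH_{1;k,f}^{P_r\mu_0\to P_s\mu_1}$. Interchanging $\mu_0$ and $\mu_1$ bounds $\partial_sF$ by the same expression, now with the tensor $\tH_{1;k,f}^{P_s\mu_1\to P_r\mu_0}$ and integrated against $P_s\mu_1$.

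\emph{Step 2: transfer to a common base measure.} In the second bound we push forward by the optimal map $F_1^{P_r\mu_0\to P_s\mu_1}$, so that it becomes an integral against $P_r\mu_0$. Lemma~\hyperlink{L:4.2}{4.2}, through \eqref{eq:4.11}, then replaces $\tH_{1;k,f}^{P_s\mu_1\to P_r\mu_0}(F_1(x))$ by $-\tH_{1;k,f}^{P_r\mu_0\to P_s\mu_1}(x)$. This requires that $\widetilde D$ have full measure, which holds because $P_r\mu_0$, $P_s\mu_1$ are absolutely continuous and the optimal maps are a.e.\ mutually inverse. Both $\sigma_k$ and traces are frame independent, so the choice of frames does not matter.

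\emph{Step 3: combine the two bounds on the diagonal.} Set $r=s=\tau$ and add, using $\tfrac{d}{d\tau}F(\tau,\tau)=\partial_r F+\partial_s F$ a.e., exactly as in the proof of Lemma~3.4. Let $\lambda_i$ be the eigenvalues of $\tH_1$. For each $I$ with $|I|=k$, put $\Lambda_I=\sum_{i\in I}\lambda_i$. Since $\sigma_k(e^{\mp\tH_1/N})=\sum_I e^{\mp\Lambda_I/N}$, the curvature terms combine through the identity
\[
2-e^{-\Lambda_I/N}-e^{\Lambda_I/N}=-4\sinh^2\Bigl(\frac{\Lambda_I}{2N}\Bigr).
\]
Summed over $I$, this produces $-\frac{8N}{\binom{n-1}{k-1}}\tr_{\bigwedge^k}\sinh^2\bigl((\tH_1)^{[k]}/(2N)\bigr)$; the constants $\frac{2nN}{k}$ match $\frac{2N}{\binom{n-1}{k-1}}\binom nk$, so they cancel correctly. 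The two drift terms are each bounded by Cauchy--Schwarz, $\bigl|\int\langle\nabla f,\nabla\theta\rangle\bigr|\le\|\nabla f\|_{L^\infty}\,\cW_2(P_\tau\mu_0,P_\tau\mu_1)$. This gives, for a.e.\ $\tau$,
\[
(h^2)'(\tau)\le a\,h(\tau)-b(\tau),
\]
with $h=\cW_2(P_\tau\mu_0,P_\tau\mu_1)$, $a=\frac{4n}{k}\|\nabla f\|_{L^\infty}$, and $b\ge0$ the $\sinh^2$ term.

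\emph{Step 4: integrate.} Lemma~2.2, in the form \eqref{eq:2.37}, gives
\[
h^2(T)\le\Bigl(h(0)+\tfrac{2nT}{k}\|\nabla f\|_{L^\infty}\Bigr)^2-\int_0^Tb,
\]
which is exactly \eqref{eq:4.16}.

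The main obstacle is Step 2. The drift terms need care as well: the pushed-forward potential gradient satisfies $\nabla\theta^{\nu_1\to\nu_0}(F_1(x))=-\dot\gamma(1)$, and its pairing with $\nabla f$ at $F_1(x)$ does not cancel against the forward term at $x$. That is why we do not attempt a cancellation and simply bound both terms by absolute values, which accounts for the factor $2$ in the $T$-term. We also need absolute continuity of $F$ along the diagonal, which we take from the proof of Lemma~3.4.
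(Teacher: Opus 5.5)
Your proposal is correct and follows essentially the same route as the paper's proof. Both use doubling of variables with Theorem~4.1 applied in each slot, Lemma~4.2 to rewrite the reversed tensor as $-\tH_{1;k,f}^{P_\tau\mu_0\to P_\tau\mu_1}$ integrated against $P_\tau\mu_0$, the identity $2-e^{x}-e^{-x}=-4\sinh^2(x/2)$ with matching constants, Cauchy--Schwarz on both drift terms, and Lemma~2.2. The one difference is cosmetic: you apply Lemma~4.2 once at a general pair $(r,s)$, whereas the paper applies it before doubling and again on the diagonal. Also, the doubling lemma you cite is Lemma~3.3 in the paper's numbering, not 3.4.
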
}


\begin{proof}
First, we apply Theorem~\hyperlink{T:4.1}{4.1} and Lemma~\hyperlink{L:4.2}{4.2} to the probability measures $(\mu_1, \mu_0)$, for almost every $\tau \in (0, T)$, we obtain
\begin{align*}
\label{eq:4.17}
&\kern-2em
\frac{d}{d\tau}
\mathscr W_2^2(P_\tau\mu_1,\mu_0)
\\
&\leq  \frac{2nN}{k} - \frac{2N}{\binom{n-1}{k-1}} \int_M   \sigma_k\biggl(\exp \biggl({- \frac{\tH_{1; k, f}^{P_\tau \mu_1 \rightarrow \mu_0}(x)}{N}} \biggr)\biggr)  \,  d P_\tau \mu_1(x)  \\
        &\kern2em -\frac{2n}{k} \int_M \langle \nabla f, \nabla \theta^{P_\tau \mu_1 \rightarrow \mu_0} \rangle  \, d P_\tau \mu_1 \\
&=
\frac{2nN}{k}- \frac{2N}{\binom{n-1}{k-1}}
\int_M
\sigma_k\biggl(
\exp\biggl(
\frac{
\tH_{1;k,f}^{\mu_0\rightarrow P_\tau\mu_1}(x)
}{N}
\biggr)
\biggr)
\,d\mu_0(x)
\\
&\kern2em -
\frac{2n}{k}
\int_M
\bigl\langle
\nabla f,
\nabla\theta^{P_\tau\mu_1\rightarrow\mu_0}
\bigr\rangle
\,dP_\tau\mu_1
\\
&\le
\frac{2nN}{k}- \frac{2N}{\binom{n-1}{k-1}}
\int_M
\sigma_k\biggl(
\exp\biggl(
\frac{
\tH_{1;k,f}^{\mu_0\rightarrow P_\tau\mu_1}(x)
}{N}
\biggr)
\biggr)
\,d\mu_0(x)
\\
&\kern2em
+
\frac{2n}{k}
\|\nabla f\|_{L^\infty}
\mathscr W_2(P_\tau\mu_1,\mu_0). \tag{4.17}
\end{align*}
Replacing $\mu_0$ by $P_s \mu_0$ in inequality \eqref{eq:4.17}, we obtain 
\begin{align*}
\label{eq:4.18}
     &\kern-2em \frac{d}{d\tau} \cW_2^2 (P_\tau \mu_1, P_s \mu_0) \\ 
    &\leq\frac{2nN}{k}- \frac{2N}{\binom{n-1}{k-1}}
\int_M
\sigma_k\biggl(
\exp\biggl(
\frac{
\tH_{1;k,f}^{P_s \mu_0\rightarrow P_\tau\mu_1}(x)
}{N}
\biggr)
\biggr)
\,dP_s \mu_0(x)
\\
&\kern2em
+
\frac{2n}{k}
\|\nabla f\|_{L^\infty}
\mathscr W_2(P_\tau\mu_1, P_s\mu_0). \tag{4.18}
\end{align*}By interchanging the roles of $\mu_0$ and $\mu_1$ and using the symmetry of $\cW_2$, inequality \eqref{eq:4.18} becomes
\begin{align*}
\label{eq:4.19}
     &\kern-2em \frac{d}{d\tau} \cW_2^2 (P_\tau \mu_0, P_s \mu_1) \\ 
    &\leq\frac{2nN}{k}- \frac{2N}{\binom{n-1}{k-1}}
\int_M
\sigma_k\biggl(
\exp\biggl(
\frac{
\tH_{1;k,f}^{P_s \mu_1\rightarrow P_\tau\mu_0}(x)
}{N}
\biggr)
\biggr)
\,dP_s \mu_1(x)
\\
&\kern2em
+
\frac{2n}{k}
\|\nabla f\|_{L^\infty}
\mathscr W_2(P_\tau\mu_0, P_s\mu_1). \tag{4.19}
\end{align*}
By summing inequalities \eqref{eq:4.18} and \eqref{eq:4.19}, we obtain
\begin{align*}
\label{eq:4.20}
    &\kern-2em \frac{d}{d\tau} \cW_2^2 (P_\tau \mu_1, P_s \mu_0) +   \frac{d}{d\tau} \cW_2^2 (P_\tau \mu_0,  P_s \mu_0) \\
    &= \frac{d}{d\tau} \cW_2^2 (P_s \mu_0, P_\tau \mu_1 ) +   \frac{d}{d\tau} \cW_2^2 (P_\tau \mu_0,  P_s \mu_0) \\
    &\leq \frac{4nN}{k}
- \frac{2N}{\binom{n-1}{k-1}}
\int_M
\sigma_k\biggl(
\exp\biggl(
\frac{
\tH_{1;k,f}^{P_s \mu_0\rightarrow P_\tau\mu_1}(x)
}{N}
\biggr)
\biggr)
\,dP_s \mu_0(x) \\
&\kern2em - \frac{2N}{\binom{n-1}{k-1}}
\int_M
\sigma_k\biggl(
\exp\biggl(
\frac{
\tH_{1;k,f}^{P_s \mu_1\rightarrow P_\tau\mu_0}(x)
}{N}
\biggr)
\biggr)
\,dP_s \mu_1(x)
\\
        &\kern2em +\frac{2n}{k} \| \nabla f \|_{L^\infty} \bigl( \cW_2(P_\tau \mu_1,  P_s \mu_0)  + \cW_2(P_\tau \mu_0, P_s \mu_1)\bigr). \tag{4.20}
\end{align*}
By setting $s = \tau$ and applying the chain rule, inequality \eqref{eq:4.20} becomes
\begin{align*}
    &\kern-2em \frac{d}{d\tau} \cW_2^2 (P_\tau \mu_1, P_\tau \mu_0)  \\
     &\leq \frac{4nN}{k}- \frac{2N}{\binom{n-1}{k-1}}
\int_M
\sigma_k\biggl(
\exp\biggl(
\frac{
\tH_{1;k,f}^{P_\tau \mu_1\rightarrow P_\tau\mu_0}(x)
}{N}
\biggr)
\biggr)
\,dP_\tau \mu_1(x)
\\
&\kern2em - \frac{2N}{\binom{n-1}{k-1}}
\int_M
\sigma_k\biggl(
\exp\biggl(
\frac{
\tH_{1;k,f}^{P_\tau \mu_0\rightarrow P_\tau\mu_1}(x)
}{N}
\biggr)
\biggr)
\,dP_\tau \mu_0(x)
\\
        &\kern2em +\frac{4n}{k} \| \nabla f \|_{L^\infty}  \cW_2(P_\tau \mu_0, P_\tau \mu_1) \\
&= \frac{4nN}{k}- \frac{2N}{\binom{n-1}{k-1}}
\int_M
\sigma_k\biggl(
\exp\biggl(
- \frac{
\tH_{1;k,f}^{P_\tau \mu_0 \rightarrow P_\tau\mu_1}(x)
}{N}
\biggr)
\biggr)
\,dP_\tau \mu_0(x)
\\
&\kern2em - \frac{2N}{\binom{n-1}{k-1}}
\int_M
\sigma_k\biggl(
\exp\biggl(
\frac{
\tH_{1;k,f}^{P_\tau \mu_0\rightarrow P_\tau\mu_1}(x)
}{N}
\biggr)
\biggr)
\,dP_\tau \mu_0(x) \\
&\kern2em +\frac{4n}{k} \| \nabla f \|_{L^\infty}  \cW_2(P_\tau \mu_0, P_\tau \mu_1) \\ 
&= -  \frac{8N}{\binom{n-1}{k-1}}  \int_M  \tr_{\bigwedge^k T_x M} \biggl[  \sinh^2 \biggl( \frac{(\tH_{1; k, f}^{P_\tau \mu_0 \rightarrow P_\tau \mu_1}(x))^{[k]} }{2N}    \biggr) \biggr] \, d P_\tau \mu_0(x)  \\
&\kern2em +\frac{4n}{k} \| \nabla f \|_{L^\infty}  \cW_2(P_\tau \mu_0, P_\tau \mu_1).
\end{align*}


Finally, since the function
$\tau\mapsto\cW_2(P_\tau\mu_0,P_\tau\mu_1)$
is absolutely continuous, 
Lemma~\hyperlink{L:2.2}{2.2}
applied to the above differential inequality yields
\begin{align*}
        &\kern-2em \cW^2_2(P_T \mu_0, P_T \mu_1) \\
        &\leq \Bigl[ \cW_2(\mu_0, \mu_1) +\frac{2nT}{k} \| \nabla f \|_{L^\infty} \Bigr]^2 \\
        &\kern2em - \frac{8N}{\binom{n-1}{k-1}} \int_0^T \int_M  \tr_{\bigwedge^k T_x M} \biggl[  \sinh^2 \biggl( \frac{(\tH_{1; k, f}^{P_\tau \mu_0 \rightarrow P_\tau \mu_1}(x))^{[k]} }{2N}    \biggr) \biggr] \, d P_\tau \mu_0(x) \, d \tau.
    \end{align*}This finishes the proof.
\end{proof}


\hypertarget{R:4.1}{
\begin{frmk}
When $f$ is constant, the drift term in Corollary~\hyperlink{C:4.1}{4.1} vanishes, and the evolution variational inequality reduces to the following contraction estimate for the heat flow:
\begin{align*}
    &\kern-2em
    \cW_2^2(P_T\mu_0,P_T\mu_1)
    -
    \cW_2^2(\mu_0,\mu_1)
    \\
    &\le
    -
    \frac{8k}{\binom{n-1}{k-1}}
    \int_0^T
    \int_M
    \operatorname{tr}_{\bigwedge^kT_xM}
    \biggl[
    \sinh^2
    \biggl(
    \frac{
    \bigl(\tH_{1}^{P_\tau\mu_0\rightarrow P_\tau\mu_1} (x) \bigr)^{[k]}
    }{2k}
    \biggr)
    \biggr]
    \,dP_\tau\mu_0(x)\,d\tau.
\end{align*}
In particular, the Wasserstein distance between the two heat flows is nonincreasing in time. Moreover, the right-hand side provides a quantitative estimate for the dissipation of the Wasserstein distance. The rate of contraction is quantified by the tensor
\[
\bigl(\tH_{1;k,f}^{P_\tau\mu_0\rightarrow P_\tau\mu_1}\bigr)^{[k]},
\]
which is naturally associated with the optimal transport from $P_\tau\mu_0$ to $P_\tau\mu_1$.
\end{frmk}}

\hypertarget{P:4.1}{
\begin{fprop}
Let $(M^n,g)$ be a smooth compact Riemannian manifold without boundary, and
assume that
$\Ric_k\ge0$
for some $k\in\{1,\ldots,n\}$. Suppose that there exist
$\mu_0,\mu_1\in\cPac(M)$ and $T>0$ such that the following hold:
\begin{enumerate}
\item[(i)]
For almost every $\tau\in(0,T)$, if
$-\theta_\tau$ is a Kantorovich potential inducing the optimal transport
from $P_\tau\mu_0$ to $P_\tau\mu_1$, then
\[
\bigl(\Hess\theta_\tau(x)\bigr)^{[k]}\le0
\]
for $P_\tau\mu_0$-almost every
$x\in D(P_\tau\mu_0,P_\tau\mu_1)$.
\item[(ii)]
$\cW_2(P_T\mu_0,P_T\mu_1)
=
\cW_2(\mu_0,\mu_1)$.
\end{enumerate}
Then, for almost every $\tau\in(0,T)$, for
$P_\tau\mu_0$-almost every $x$, for every
$k$-dimensional subspace
$\Sigma\subset T_xM$, and every
$s\in[0,1]$,
\[
\Ric_k\bigl(\Sigma_s,\dot\gamma_x^\tau(s)\bigr)=0,
\]
where
\[
\gamma_x^\tau(s)
\coloneqq
F_s^{P_\tau\mu_0\rightarrow P_\tau\mu_1}(x),
\]
and $\Sigma_s$ denotes the parallel transport of $\Sigma$ along
$\gamma_x^\tau$.
\end{fprop}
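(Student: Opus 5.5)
The plan is to squeeze the contraction estimate of Remark~\hyperlink{R:4.1}{4.1} between hypothesis (ii) and zero, and then use the concavity characterization from Corollary~\hyperlink{C:3.1}{3.1}(6) together with hypothesis (i) to force the entropy tensor to vanish identically along each transport geodesic. Throughout, $f$ is constant, so by the remark following Corollary~\hyperlink{C:3.1}{3.1} we may take $N=k$, $K=0$ and $\tH_{t;k,f}=\tH_t$.

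\emph{Step 1: the terminal entropy tensor vanishes.} By Remark~\hyperlink{R:4.1}{4.1},
\[
\cW_2^2(P_T\mu_0,P_T\mu_1)-\cW_2^2(\mu_0,\mu_1)\le -\frac{8k}{\binom{n-1}{k-1}}\int_0^T\int_M \tr_{\bigwedge^kT_xM}\Bigl[\sinh^2\Bigl(\frac{(\tH_1^{P_\tau\mu_0\to P_\tau\mu_1}(x))^{[k]}}{2k}\Bigr)\Bigr]\,dP_\tau\mu_0(x)\,d\tau .
\]
By (ii) the left side is $0$. The integrand is nonnegative, so it vanishes for a.e.\ $\tau\in(0,T)$ and $P_\tau\mu_0$-a.e.\ $x$. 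Since $\tr\sinh^2(B)=0$ for a self-adjoint $B$ forces $B=0$, we obtain $(\tH_1(x))^{[k]}=0$. Equivalently, $\langle A_1(x)\xi,\xi\rangle=0$ for every decomposable unit $k$-vector $\xi$, where $A_s(x)\coloneqq(\tH_s^{P_\tau\mu_0\to P_\tau\mu_1}(x))^{[k]}$. Fix such $\tau$ and $x$, intersected with the full-measure set on which (i) holds.

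\emph{Step 2: rigidity via concavity.} For a decomposable unit $\xi$ (i.e.\ a $k$-plane $\Sigma\subset T_xM$) set $c_\xi(s)\coloneqq\exp\bigl(-\tfrac1k\langle A_s\xi,\xi\rangle\bigr)$. By Corollary~\hyperlink{C:3.1}{3.1}(6) with $K=0$, $N=k$, the function $c_\xi$ is concave on $[0,1]$. Its boundary values are $c_\xi(0)=1$ (since $A_0=0$) and $c_\xi(1)=1$ (by Step 1). Moreover, since $\dot\tH_0=-\tU_0=-\Hess\theta_\tau(x)$ and $\langle A_0\xi,\xi\rangle=0$,
\[
c_\xi'(0)=\tfrac1k\tr\bigl(\Hess\theta_\tau(x)|_\Sigma\bigr)=\tfrac1k\bigl\langle(\Hess\theta_\tau(x))^{[k]}\xi,\xi\bigr\rangle\le0
\]
by (i). Concavity now pins $c_\xi$ down from both sides. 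Lying above the chord gives $c_\xi\ge1$, while lying below the tangent line at $0$ gives $c_\xi(s)\le1+c_\xi'(0)s\le1$. Hence $c_\xi\equiv1$, that is,
\[
\tr\bigl(\tH_s(x)|_\Sigma\bigr)=0 \quad\text{for all } s\in[0,1] \text{ and all } k\text{-planes }\Sigma
\]
(read in the parallel frame, so $\Sigma$ corresponds to $\Sigma_s$).

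\emph{Step 3: reading off the curvature.} Differentiating twice in $s$ gives $\tr(\dot\tU_s|_\Sigma)=0$. By (2.56) with $f$ constant,
\[
0=\tr\bigl(\tU_s^2|_\Sigma\bigr)+\Ric_k\bigl(\Sigma_s,\dot\gamma_x^\tau(s)\bigr).
\]
Both terms are nonnegative: the first because $\tU_s$ is symmetric, the second because $\Ric_k\ge0$. Hence both vanish, which is exactly the claim.

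The main obstacle is the measure-theoretic bookkeeping rather than the geometry. One must make the a.e.\ sets in $\tau$ and $x$ coming from Step 1, from hypothesis (i), and from $D(P_\tau\mu_0,P_\tau\mu_1)$ compatible. One must also justify that Corollary~\hyperlink{C:3.1}{3.1}(6) applies at every such $x$ for all $k$-planes $\Sigma$ simultaneously, which is fine since it is a pointwise statement at each $x\in D$.
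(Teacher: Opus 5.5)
Your proposal is correct and follows essentially the same route as the paper. Steps 1 and 3 coincide with the paper's argument, and your Step 2 is the same squeeze: vanishing boundary values of $s\mapsto\tr(\tH_s|_\Sigma)$, a sign on its initial slope from (i), and a one-sided convexity, which you phrase through the concavity of $e^{-\frac1k\tr(\tH_s|_\Sigma)}$ from Corollary 3.1(6). The paper instead argues directly that $m_\Sigma(s)=\tr(\tU_s|_\Sigma)$ is nonincreasing by the traced Riccati inequality, is $\le 0$ at $s=0$, and has zero integral over $[0,1]$. That uses only convexity of $s\mapsto\tr(\tH_s|_\Sigma)$, not the stronger concavity statement you invoke.
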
}

\begin{proof}
By Remark~\hyperlink{R:4.1}{4.1}, with $N=k$ in the unweighted case, we
have
\begin{align*}
    &\kern-2em
    \cW_2^2(P_T\mu_0,P_T\mu_1)
    -
    \cW_2^2(\mu_0,\mu_1)
    \\
    &\le
    -
    \frac{8k}{\binom{n-1}{k-1}}
    \int_0^T
    \int_M
    \operatorname{tr}_{\bigwedge^kT_xM}
    \biggl[
    \sinh^2
    \biggl(
    \frac{
    \bigl(
    \tH_1^{P_\tau\mu_0\rightarrow P_\tau\mu_1}(x)
    \bigr)^{[k]}
    }{2k}
    \biggr)
    \biggr]
    \,dP_\tau\mu_0(x)\,d\tau.
\end{align*}
The integrand on the right-hand side is nonnegative. Since
\[
\cW_2(P_T\mu_0,P_T\mu_1)
=
\cW_2(\mu_0,\mu_1),
\]
the preceding inequality implies that
\begin{align*}
    \int_0^T
    \int_M
    \operatorname{tr}_{\bigwedge^kT_xM}
    \biggl[
    \sinh^2
    \biggl(
    \frac{
    \bigl(
    \tH_1^{P_\tau\mu_0\rightarrow P_\tau\mu_1}(x)
    \bigr)^{[k]}
    }{2k}
    \biggr)
    \biggr]
    \,dP_\tau\mu_0(x)\,d\tau
    =
    0.
\end{align*}
Consequently, for almost every $\tau\in(0,T)$,
\begin{align*}
    \int_M
    \operatorname{tr}_{\bigwedge^kT_xM}
    \biggl[
    \sinh^2
    \biggl(
    \frac{
    \bigl(
    \tH_1^{P_\tau\mu_0\rightarrow P_\tau\mu_1}(x)
    \bigr)^{[k]}
    }{2k}
    \biggr)
    \biggr]
    \,dP_\tau\mu_0(x)
    =
    0.
\end{align*}
For each such $\tau$, the nonnegativity of the integrand gives
\[
\operatorname{tr}_{\bigwedge^kT_xM}
\biggl[
\sinh^2
\biggl(
\frac{
\bigl(
\tH_1^{P_\tau\mu_0\rightarrow P_\tau\mu_1}(x)
\bigr)^{[k]}
}{2k}
\biggr)
\biggr]
=
0
\]
for $P_\tau\mu_0$-almost every $x$. Since the operator inside the trace
is nonnegative and $\sinh t=0$ if and only if $t=0$, it follows that
\[
\bigl(
\tH_1^{P_\tau\mu_0\rightarrow P_\tau\mu_1}(x)
\bigr)^{[k]}
=
0
\]
for $P_\tau\mu_0$-almost every $x$.

Fix such a time $\tau$ and a point $x$ belonging to the corresponding
full-measure set. For simplicity, we write
$\gamma(s)\coloneqq\gamma_x^\tau(s)$
and
$\tU_s(x)
\coloneqq
\tU_s^{P_\tau\mu_0\rightarrow P_\tau\mu_1}(x)$.
By the definition of $\tH_1$, we have
\[
\tH_1^{P_\tau\mu_0\rightarrow P_\tau\mu_1}(x)
=
-\int_0^1\tU_s(x)\,ds.
\]

Let $\Sigma\subset T_xM$ be a $k$-dimensional subspace, and let
$\xi_\Sigma\in\bigwedge^kT_xM$ be a unit decomposable $k$-vector
representing $\Sigma$. The identity
$\bigl(
\tH_1^{P_\tau\mu_0\rightarrow P_\tau\mu_1}(x)
\bigr)^{[k]}
=
0$
implies that
\begin{align*}
\label{eq:4.21}
&\kern-2em \int_0^1
\tr \bigl(
\tU_s(x)\big|_\Sigma
\bigr)
\,ds \\
&= -  \tr \bigl(
\tH_1^{P_\tau\mu_0\rightarrow P_\tau\mu_1}(x)
\big|_\Sigma
\bigr) = - \bigl\langle
\bigl(
\tH_1^{P_\tau\mu_0\rightarrow P_\tau\mu_1}(x)
\bigr)^{[k]}
\xi_\Sigma,
\xi_\Sigma
\bigr\rangle = 0. \tag{4.21}
\end{align*}

Define
$m_\Sigma(s)
\coloneqq
\operatorname{tr}
\bigl(
\tU_s(x)\big|_\Sigma
\bigr)$.
Let $\Sigma_s$ denotes the parallel transport of $\Sigma$ along $\gamma$,
then by taking the trace of the Riccati equation
$\dot{\tU}_s+\tU_s^2+\tR_s=0$
over $\Sigma$, we obtain
\[
m_\Sigma'(s)
=
-
\operatorname{tr}
\bigl(
\tU_s(x)^2\big|_\Sigma
\bigr)
-
\Ric_k\bigl(\Sigma_s,\dot\gamma(s)\bigr).
\]
By the Cauchy--Schwarz inequality, we have
\[
\operatorname{tr}
\bigl(
\tU_s(x)^2\big|_\Sigma
\bigr)
\ge
\frac{1}{k}
\biggl[
\operatorname{tr}
\bigl(
\tU_s(x)\big|_\Sigma
\bigr)
\biggr]^2
=
\frac{1}{k}m_\Sigma(s)^2.
\]
Since $\Ric_k\ge 0$, it follows that
\[
m_\Sigma'(s)
\le
-\frac{1}{k}m_\Sigma(s)^2
-
\Ric_k\bigl(\Sigma_s,\dot\gamma(s)\bigr)
\le 0.
\]
Thus, $m_\Sigma$ is nonincreasing.

Moreover, by the hypothesis on the Kantorovich potential $-\theta_\tau$,
\[
m_\Sigma(0)
=
\operatorname{tr}
\bigl(
\Hess\theta_\tau(x)\big|_\Sigma
\bigr)
\le 0.
\]
Therefore,
\[
m_\Sigma(s)\le 0
\qquad
\text{for every }s\in[0,1].
\]
On the other hand, \eqref{eq:4.21} gives
\[
\int_0^1m_\Sigma(s)\,ds=0.
\]
Since $m_\Sigma$ is nonpositive, we conclude that
\[
m_\Sigma(s)=0
\qquad
\text{for every }s\in[0,1].
\]
Substituting this identity into the traced Riccati equation yields
\[
0
=
-
\operatorname{tr}
\bigl(
\tU_s(x)^2\big|_\Sigma
\bigr)
-
\Ric_k\bigl(\Sigma_s,\dot\gamma(s)\bigr).
\]
Both terms on the right-hand side are nonpositive, while
$\tr\bigl(
\tU_s(x)^2\big|_\Sigma
\bigr)\ge 0$
and
$\Ric_k\bigl(\Sigma_s,\dot\gamma(s)\bigr)\ge 0$.
Hence, both terms vanish, and in particular
\[
\Ric_k\bigl(\Sigma_s,\dot\gamma(s)\bigr)=0
\qquad
\text{for every }s\in[0,1].
\]
This finishes the proof.
\end{proof}

\begin{frmk}
Suppose that, for every $p\in M$ and every $v\in T_pM$, the family of
transported $k$-dimensional subspaces arising in Proposition~\hyperlink{P:4.1}{4.1} is
sufficiently rich to determine symmetric endomorphisms of $T_pM$; to be more precise,
whenever a symmetric endomorphism
$A:T_pM\to T_pM$ satisfies
\[
\operatorname{tr}
\bigl(
\pi_\Sigma\circ A|_\Sigma
\bigr)
=
0
\]
for every such transported $k$-dimensional subspace $\Sigma$, then necessarily
$A=0$.
Since
\[
\Ric_k(\Sigma,v)
=
\operatorname{tr}
\bigl(
\pi_\Sigma\circ(\Riem(\bullet,v)v)\big|_\Sigma
\bigr),
\]
the preceding proposition then implies that
$\Ric_k\equiv0$.
\end{frmk}

\subsection{Comparison with Ketterer--Mondino}
\label{sec:4.2}
In this section, we compare our results with those of Ketterer and Mondino \cite{ketterer_sectional_2018} in the unweighted setting.
They considered a countably $\mathcal{H}^k$-rectifiable Wasserstein geodesic 
\begin{align*}
\mu^{(k)}_t=\rho^{(k)}_t \mathcal{H}^k\llcorner\Sigma_t\in \mathcal{P}_c(M,\mathcal{H}^k), \qquad t \in [0, 1],
\end{align*}where each $\Sigma_t$ is a countably $\mathcal{H}^k$-rectifiable set. The geodesic
$(\mu^{(k)}_t)_{t \in [0, 1]}$ is induced by a family of transport maps $T_{s,t}$ satisfying 
\begin{align*}
\mu^{(k)}_s=(T_{s,t})_\sharp\mu^{(k)}_t.
\end{align*}
After modifying on a null set, one may assume that $T_{s,t}$ is Lipschitz.
Moreover, for $\cH^k$-almost every $x\in \Sigma_0$, the curve $\gamma_x(t) \coloneqq T_{t,0}(x)$ is a minimizing geodesic.

\medskip

In the following, let $\mu_0, \mu_1\in \cPac(M)$ and let $-\theta$ be a Kantorovich potential such that $F \coloneqq \exp(\nabla \theta)$ is the optimal transport map from $\mu_0$ to $\mu_1$.
For $t \in [0, 1]$, we define $F_t=\exp(t\nabla \theta)$ and $\mu_t \coloneqq (F_t)_\sharp \mu_0$,
then $( \mu_t )_{t \in [0, 1]}$ is the Wasserstein geodesic from $\mu_0$ to $\mu_1$.
For the comparison, let $\Sigma_0\subset \supp \mu_0$ be a countably $\mathcal{H}^k$-rectifiable set and define
\begin{align*}
\label{eq:4.22}
    \mu^{(k)}_0&\coloneqq \frac{1}{\int_{\Sigma_0}\rho_0 d\mathcal{H}^k}\rho_0 \, \mathcal{H}^k\llcorner\Sigma_0, \tag{4.22}\\
    \label{eq:4.23}
    \Sigma_t &\coloneqq F_t(\Sigma_0), \tag{4.23} \\
    \label{eq:4.24}
    \mu^{(k)}_t&\coloneqq (F_t)_\sharp\mu^{(k)}_0. \tag{4.24}
\end{align*}
Without loss of generality, we may assume that $(\mu^{(k)}_t)$ is then a countably $\mathcal{H}^k$-rectifiable Wasserstein geodesic.
The key point is that the Wasserstein geodesics $(\mu_t)_{t \in [0, 1]}$ and $(\mu^{(k)}_t)_{t \in [0, 1]}$ are induced by the same family of transport maps $(F_t)_{t \in [0, 1]}$. We emphasize, however, that unlike the notation used in the previous sections, the sets $\Sigma_t$ here are not obtained by parallel transport of a fixed $k$-dimensional subspace along the geodesic.

\medskip

For our computation, we define $T_{t} \coloneqq F_t|_{\Sigma_0}$. Fix a point $x\in \Sigma_0$, and consider the minimizing geodesic $\gamma(t)=F_t(x)=T_t(x)$ for $t\in [0,1]$.
Let $\{e_\alpha(t)\}_{\alpha=1}^n$ be a parallel orthonormal frame along $\gamma$, and
let $\{E_i(t)\}_{i=1}^k$ be an orthonormal frame of $T_{\gamma(t)}\Sigma_t$ satisfying $\nabla_t E_i(t)\perp T_{\gamma(t)}\Sigma_t$.
For each $t\in[0,1]$, define the $n \times k$ matrix $\tW(t)$ by
\begin{align*}
\label{eq:4.25}
   \tW_{\alpha i}(t) = g(E_i(t), e_\alpha(t)), \tag{4.25}
\end{align*}
so that $\tW(t)$ represents the inclusion map $T_{\gamma(t)}\Sigma_t\hookrightarrow T_{\gamma(t)}M$ with respect to the bases $\{E_i(t)\}_{i = 1}^k$ and $\{e_\alpha(t)\}_{\alpha =1}^n$.
Consequently, the transpose $\tW^t(t)\in M_{k\times n}$ is the matrix representation of the orthogonal projection from $T_{\gamma(t)}M$ onto $T_{\gamma(t)}\Sigma_t$.
A direct computation shows that for any $t \in [0, 1]$, we have
\begin{align*}
\begin{cases}
    \tW^t(t) \tW(t) &= \Id_k,\\
    \dot{\tW}^t(t) \tW(t)&=0, \\
    \tW^t(t)\dot{\tW}(t) &= 0. 
\end{cases}
\end{align*}
Next, let $J(t) \colon T_xM\to T_{\gamma(t)}M$ denote the Jacobi field map. Namely, for each $v\in T_xM$, the vector field $J(t)v$ is the unique Jacobi field along $\gamma$ satisfying 
\begin{align*}
    (Jv)(0)=v \quad \text{ and } \quad (Jv)'(0)=\Hess \theta(v).
\end{align*}
We use $\tJ(t)$ to denote the corresponding $n\times n$ matrix with respect to the bases $\{e_\alpha(0)\}_{\alpha = 1}^n$ and $\{e_\alpha(t) \}_{\alpha =1}^n$, as in the previous sections.
Since $\gamma$ is minimizing, $\tJ(t)$ is invertible for every $t \in [0, 1]$. Following \cite{ketterer_sectional_2018}, we define
\begin{align*}
\label{eq:4.26}
    \tB(t) \coloneqq \tJ(t) \tW_0\in M_{n\times k}, \tag{4.26}
\end{align*}
where $\tW_0 \coloneqq \tW(0)$. By the definition of $\Sigma_t$, the image of $\tB(t)$ coincides with the image of $\tW(t)$.

\medskip

From now on, for notational simplicity, we write $\tW=\tW(t)$, $\tJ=\tJ(t)$, and $\tB=\tB(t)$ for each $t \in [0, 1]$. Unless otherwise specified, we suppress the dependence on $t$ whenever no confusion can arise. 

\hypertarget{L:4.3}{
\begin{flemma}
    The $k \times k$ matrix 
    \begin{align*}
        \tW^{\, t} \tB=\tW^{ \, t} \tJ \tW_0\in M_{k \times k}(\mathbb R)
    \end{align*} is invertible. Moreover, we have
    \begin{align*}
        (\tW^{\, t} \tB)^{-1}= \tW_0^t \tJ^{\, -1} \tW.
    \end{align*}
\end{flemma}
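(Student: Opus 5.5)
The plan is to verify directly that $\tW_0^t\tJ^{-1}\tW$ is a two-sided inverse of $\tW^t\tB=\tW^t\tJ\tW_0$. Invertibility then follows automatically.

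The key structural input is the fact recorded just before the statement: the image of $\tB=\tJ\tW_0$ coincides with the image of $\tW$. Both are $k$-dimensional subspaces of $\mathbb R^n$. For $\tB$ this holds because $\tJ$ is invertible and $\tW_0$ has orthonormal columns, so $\tB$ has rank $k$. Since $\tW^t\tW=\Id_k$, the matrix $P\coloneqq\tW\tW^t$ is the orthogonal projection onto $\operatorname{im}\tW=\operatorname{im}\tB$. Hence $P\tB=\tB$, that is,
\[
\tW\tW^t\tJ\tW_0=\tJ\tW_0 .
\]
Multiplying this on the left by $\tW_0^t\tJ^{-1}$ gives
\[
(\tW_0^t\tJ^{-1}\tW)(\tW^t\tJ\tW_0)=\tW_0^t\tJ^{-1}\tJ\tW_0=\tW_0^t\tW_0=\Id_k .
\]
So $\tW_0^t\tJ^{-1}\tW$ is a left inverse of $\tW^t\tB$. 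Because $\tW^t\tB$ is a square $k\times k$ matrix, a left inverse is automatically a two-sided inverse. This proves invertibility and the formula $(\tW^t\tB)^{-1}=\tW_0^t\tJ^{-1}\tW$ at once.

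As a consistency check, the product in the other order can also be computed directly. From $\operatorname{im}\tW=\operatorname{im}(\tJ\tW_0)$ we get $\operatorname{im}(\tJ^{-1}\tW)=\operatorname{im}\tW_0$. Hence $\tW_0\tW_0^t\tJ^{-1}\tW=\tJ^{-1}\tW$, which yields
\[
(\tW^t\tJ\tW_0)(\tW_0^t\tJ^{-1}\tW)=\tW^t\tW=\Id_k .
\]

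The only real obstacle is justifying $\operatorname{im}\tB=\operatorname{im}\tW$ at a given $x$ and $t$. This requires $T_{\gamma(t)}\Sigma_t=dF_t(T_x\Sigma_0)$ and $dF_t|_x=J(t)$, where the differential is taken in the Alexandrov sense at points of $D(\mu_0,\mu_1)$. These are exactly the facts asserted in the setup ("by the definition of $\Sigma_t$"), valid for $\cH^k$-a.e.\ $x\in\Sigma_0$ where $T_x\Sigma_0$ exists and $F_t$ is differentiable along $\Sigma_0$. I would take them as given from the paragraph preceding the lemma, and restrict attention to such $x$ if needed.
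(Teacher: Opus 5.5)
Your proposal is correct and follows the paper's proof. Like the paper, you use $\tW\tW^t\tB=\tB$ (from $\operatorname{im}\tB=\operatorname{im}\tW$), multiply on the left by $\tW_0^t\tJ^{-1}$ to obtain $\Id_k$, and conclude from squareness; your extra right-inverse check is just the identity $\tW_0\tW_0^t\tJ^{-1}\tW=\tJ^{-1}\tW$, which the paper derives separately as \eqref{eq:4.30} in the next lemma.
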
}

\begin{proof}
    First, since the image of $\tB(t)$ coincides with the image of $\tW(t)$ and $\tW \tW^t$ is the orthogonal projection onto the image of $\tW(t)$, we have 
    \begin{align*}
    \label{eq:4.27}
    \tW \tW^t \tB = \tB.      
    \tag{4.27}
    \end{align*}
    Thus, by definition \eqref{eq:4.26}, we obtain 
    \begin{align*}
        \tW_0^t \, \tJ^{-1} \tW \tW^t \tB=\tW_0^t \, \tJ^{-1} \tB= \tW^t_0 \, \tJ^{-1} \tJ \tW_0=\Id_k.
    \end{align*}Hence, $\tW_0^t \, \tJ^{-1} \tW$ is the inverse of the square matrix $\tW^t \tB$. This finishes the proof. 
\end{proof}

We next define the $n\times k$ matrix
\begin{align*}
\label{eq:4.28}
\mathcal{U}
\coloneqq
\dot{\tB} \, (\tW^t \tB)^{-1}
=
\dot{\tJ} \tW_0 \tW_0^t \, \tJ^{-1} \tW.
\tag{4.28}
\end{align*}

\hypertarget{L:4.4}{
\begin{flemma}
    We have 
    \begin{align*}
    \label{eq:4.29}
        \mathcal{U}=\dot{\tJ} \tJ^{-1} \tW.
        \tag{4.29}
    \end{align*}
\end{flemma}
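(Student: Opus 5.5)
We have $\mathcal{U}=\dot{\tJ}\tW_0\tW_0^t\tJ^{-1}\tW$ by \eqref{eq:4.28}, so \eqref{eq:4.29} amounts to showing that the $n\times n$ projection $\tW_0\tW_0^t$ can be dropped. It therefore suffices to prove
\[
\tW_0\tW_0^t\,\tJ^{-1}\tW=\tJ^{-1}\tW .
\]
This says that every column of $\tJ^{-1}\tW$ lies in the image of $\tW_0$, which is the tangent space $T_x\Sigma_0$ in the frame $\{e_\alpha(0)\}$. Once this inclusion is known, multiplying on the left by $\dot{\tJ}$ gives the claim, since $\tW_0\tW_0^t$ is the orthogonal projection onto $\operatorname{im}\tW_0$ and acts as the identity there.

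The inclusion comes from the fact that the image of $\tB=\tJ\tW_0$ coincides with the image of $\tW$, which was already used in \eqref{eq:4.27}. The columns of $\tW$ span $\operatorname{im}\tW=\operatorname{im}(\tJ\tW_0)$, so for each column $w$ of $\tW$ there is $c\in\mathbb R^k$ with $w=\tJ\tW_0c$. Since $\tJ$ is invertible ($\gamma$ is minimizing), $\tJ^{-1}w=\tW_0c\in\operatorname{im}\tW_0$. In matrix form this is
\[
\tJ^{-1}\tW=\tW_0C,\qquad C\coloneqq\tW_0^t\tJ^{-1}\tW=(\tW^t\tB)^{-1},
\]
where $C$ is the inverse from Lemma~\hyperlink{L:4.3}{4.3}. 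To check this without tracking individual columns, use \eqref{eq:4.27} in the form $\tW=\tW\tW^t\tB=\tB(\tW^t\tB)^{-1}\cdots$, or more directly: by \eqref{eq:4.27}, $\tB=\tW(\tW^t\tB)$, and $\tW^t\tB$ is invertible, so
\[
\tW=\tB(\tW^t\tB)^{-1}=\tJ\tW_0(\tW^t\tB)^{-1}.
\]
Multiplying by $\tJ^{-1}$ gives $\tJ^{-1}\tW=\tW_0(\tW^t\tB)^{-1}$. Substituting this into $\mathcal U$ and using $\tW_0^t\tW_0=\Id_k$ yields
\[
\mathcal U=\dot{\tJ}\tW_0\tW_0^t\tW_0(\tW^t\tB)^{-1}=\dot{\tJ}\tW_0(\tW^t\tB)^{-1}=\dot{\tJ}\tJ^{-1}\tW .
\]

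The only point needing care is the identity $\operatorname{im}\tB=\operatorname{im}\tW$, which the paper asserts from the definition $\Sigma_t=F_t(\Sigma_0)$. It holds because $dF_t$ restricted to $T_x\Sigma_0$ is represented by $\tJ\tW_0$: a curve in $\Sigma_0$ through $x$ in direction $v$ produces a variation through the geodesics $F_s(\cdot)$, whose variation field is the Jacobi field with data $(v,\Hess\theta\,v)$. I take this, and the equal dimension $k$ (guaranteed by the invertibility of $\tJ$), as given from the setup. With that granted, the argument above is a short algebraic manipulation.
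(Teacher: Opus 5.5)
Your proof is correct and follows the paper's route. Both arguments combine \eqref{eq:4.27} with the invertibility of $\tW^t\tB$ (Lemma~4.3) to get $\tW=\tB(\tW^t\tB)^{-1}$, then multiply by $\tJ^{-1}$; the only cosmetic difference is that the paper substitutes $(\tW^t\tB)^{-1}=\tW_0^t\tJ^{-1}\tW$ to reach \eqref{eq:4.30} directly, whereas you use $\tW_0^t\tW_0=\Id_k$. The aborted chain ``$\tW=\tW\tW^t\tB=\cdots$'' is wrong as written, since the middle term equals $\tB$; drop it and keep the ``more directly'' derivation.
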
}

\begin{proof}
By equation \eqref{eq:4.27} and Lemma~\hyperlink{L:4.3}{4.3}, we have
\begin{align*}
\tJ \tW_0 \tW_0^t \, \tJ^{-1} \tW
=
\tB \, (\tW^t \tB)^{-1}
=
 \tW \tW^t \tB \, (\tW^t \tB)^{-1}
=
\tW.
\end{align*}
Multiplying both sides on the left by $\tJ^{-1}$, we obtain
\begin{align*}
\label{eq:4.30}
\tW_0 \tW_0^t \, \tJ^{-1} \tW
=
\tJ^{-1}\tW.
\tag{4.30}
\end{align*}
Substituting identity \eqref{eq:4.30} into the definition \eqref{eq:4.28} of $\mathcal U$ gives
\[
\mathcal U
=
\dot{\tJ} \tW_0 \tW_0^t \, \tJ^{-1} \tW
=
\dot{\tJ} \tJ^{-1} \tW
\]
This finishes the proof. 
\end{proof}

Our goal is to compute the derivative of $\mathcal{U}$, we first have the following lemma.

\begin{flemma}
We have
\begin{align*}
\label{eq:4.31}
\dot{\tW}
=
(\Id_n- \tW \tW^{\, t}) \, \mathcal U
=
\mathcal U^\perp,
\tag{4.31}
\end{align*}where $\mathcal U^\perp$ is the normal component of $\mathcal U$ with respect to the orthogonal decomposition
$T_{\gamma(t)}M
=
T_{\gamma(t)}\Sigma_t
\oplus
T_{\gamma(t)}\Sigma_t^\perp$.
Consequently, we have
\begin{align*}
\label{eq:4.32}
\dot{\tW}^{\, t} \mathcal U
=
\dot{\tW}^{\, t} \dot{\tW}.
\tag{4.32}
\end{align*}
\end{flemma}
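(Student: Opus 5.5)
The plan is to compute $\dot{\tW}$ through its tangential and normal components, using the defining properties of the frame $\{E_i\}$, and then to identify the normal component with that of $\mathcal U$ via Lemma~\hyperlink{L:4.4}{4.4}.

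First, the tangential component vanishes. By the identity $\tW^t\dot{\tW}=0$ recorded above, which encodes the condition $\nabla_t E_i\perp T_{\gamma(t)}\Sigma_t$, we have $\tW\tW^t\dot{\tW}=0$. Hence
\[
\dot{\tW}=(\Id_n-\tW\tW^t)\dot{\tW}.
\]
Thus it suffices to show that $(\Id_n-\tW\tW^t)\dot{\tW}=(\Id_n-\tW\tW^t)\mathcal U$.

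For the normal component, differentiate the identity from the proof of Lemma~\hyperlink{L:4.4}{4.4}. Since $\tB=\tJ\tW_0$ has the same image as $\tW$, we have $\tB=\tW C$ with $C\coloneqq\tW^t\tB$ a $k\times k$ matrix, invertible by Lemma~\hyperlink{L:4.3}{4.3}. Differentiating in $t$ gives
\[
\dot{\tB}=\dot{\tW}C+\tW\dot C.
\]
Applying $\Id_n-\tW\tW^t$ kills the last term, and applying it a second time uses the previous step, so
\[
(\Id_n-\tW\tW^t)\dot{\tB}=\dot{\tW}C.
\]
Multiplying on the right by $C^{-1}=(\tW^t\tB)^{-1}$ and recalling \eqref{eq:4.28}, $\mathcal U=\dot{\tB}(\tW^t\tB)^{-1}$, we obtain $\dot{\tW}=(\Id_n-\tW\tW^t)\mathcal U=\mathcal U^\perp$. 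This proves \eqref{eq:4.31}.

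For \eqref{eq:4.32}, write $\mathcal U=\tW\tW^t\mathcal U+\mathcal U^\perp$. Then
\[
\dot{\tW}^t\mathcal U=\dot{\tW}^t\tW\,\tW^t\mathcal U+\dot{\tW}^t\mathcal U^\perp .
\]
The first term vanishes because $\dot{\tW}^t\tW=0$, and the second equals $\dot{\tW}^t\dot{\tW}$ by \eqref{eq:4.31}.

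The main obstacle is the smoothness needed to differentiate $\tB=\tW(\tW^t\tB)$, in particular the differentiability of the frame $E_i$ along $\gamma$. Since $\tB$ is smooth in $t$ and has rank $k$, its image varies smoothly. I would justify this by noting that, once $\tW$ is taken to be smooth, the frame $\{E_i\}$ is obtained from the columns of $\tB$ by Gram--Schmidt followed by a normal-parallel transport correction, both of which preserve smoothness.
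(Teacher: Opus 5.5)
Your proof is correct and follows essentially the same route as the paper. Both differentiate $\tB=\tW\tW^t\tB$, use $\tW^t\dot{\tW}=0$ (equivalently $\dot{\tW}^t\tW=0$) to remove the extra term, right-multiply by $(\tW^t\tB)^{-1}$, and then obtain (4.32) from the same tangential/normal splitting of $\mathcal U$. The only difference is cosmetic: you eliminate $\tW\dot C$ by applying $\Id_n-\tW\tW^t$, whereas the paper expands $\dot C$ and kills $\tW\dot{\tW}^t\tB$ directly.
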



\begin{proof}
By differentiating the identity \eqref{eq:4.27}, we obtain
\begin{align*}
\label{eq:4.33}
\dot{\tW} \tW^t \tB
+
\tW \dot{\tW}^t \tB
+
\tW \tW^t \dot{\tB}
=
\dot{\tB}.
\tag{4.33}
\end{align*}
Since
$\tB= \tW \tW^t \tB$,
the second term in \eqref{eq:4.33} can be written as
$\tW \dot{\tW}^t \tW \tW^t \tB$,
which vanishes because
$\dot{\tW}^t \tW=0$.
Therefore, we get 
\begin{align*}
\label{eq:4.34}
\dot{\tW} \tW^t \tB
+
\tW \tW^t \dot{\tB}
=
\dot{\tB}.    
\tag{4.34}
\end{align*}
Since $\tW^t \tB$ is invertible, multiplying equation \eqref{eq:4.34} on the right by
$(\tW^t \tB)^{-1}$ yields
\[
\dot{\tW}
=
(\Id_n- \tW \tW^t) \, \dot{\tB} \, (\tW^t \tB)^{-1}
=
(\Id_n-\tW \tW^t) \, \mathcal U.
\]
Since
$\Id_n- \tW \tW^t$
is the orthogonal projection onto the normal space
$T_{\gamma(t)}\Sigma_t^\perp
\subset
T_{\gamma(t)}M$,
the first identity \eqref{eq:4.31} follows.

\medskip

Finally, by \eqref{eq:4.31}, we have 
\begin{align*}
\label{eq:4.35}
\mathcal U
=
\tW \tW^t\mathcal U
+
(\Id_n- \tW \tW^t)\, \mathcal U
=
\tW \tW^t\mathcal U+\dot{\tW}.
\tag{4.35}
\end{align*}
By multiplying equation \eqref{eq:4.35} on the left by $\dot{\tW}^t$ and using
$\dot{\tW}^t \tW=0$,
we obtain
\[
\dot{\tW}^t \mathcal U
=
\dot{\tW}^t \tW \tW^t \mathcal U
+
\dot{\tW}^t\dot{\tW}
=
\dot{\tW}^t \dot{\tW}.
\]
This finishes the proof.
\end{proof}

We are now ready to compute the derivative of $\mathcal U$.

\hypertarget{L:4.6}{
\begin{flemma}
    We have
    \begin{align*}
    \label{eq:4.36}
        \dot{\mathcal{U}}=-\tR \tW-\mathcal{U} \tW^{\, t} \mathcal{U},
        \tag{4.36}
    \end{align*}
    where $\tR$ is the $n \times n$ curvature matrix defined in Section~2.1.2.
\end{flemma}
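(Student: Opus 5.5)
We will differentiate the expression $\mathcal U=\dot{\tJ}\tJ^{-1}\tW$ from Lemma~\hyperlink{L:4.4}{4.4} directly, rather than the defining formula \eqref{eq:4.28}. By the product rule,
\begin{align*}
\dot{\mathcal U}
=
\frac{d}{dt}\bigl(\dot{\tJ}\tJ^{-1}\bigr)\tW
+
\dot{\tJ}\tJ^{-1}\dot{\tW}.
\end{align*}
The first term is $\dot{\tU}\tW$ with $\tU=\dot{\tJ}\tJ^{-1}$. The Riccati equation \eqref{eq:2.18}, which follows from the matrix Jacobi equation $\ddot{\tJ}+\tR\tJ=0$, gives $\dot{\tU}=-\tU^2-\tR$. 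Hence
\[
\dot{\mathcal U}=-\tR\tW-\tU^2\tW+\tU\dot{\tW}.
\]
Since all frames are parallel, no connection terms appear: differentiating coefficient matrices in the frames $\{e_\alpha(t)\}$ is the covariant derivative.

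Next we insert the formula for $\dot{\tW}$ from \eqref{eq:4.31}, namely
\[
\dot{\tW}=(\Id_n-\tW\tW^t)\,\mathcal U=(\Id_n-\tW\tW^t)\,\tU\tW,
\]
using $\mathcal U=\tU\tW$. This gives
\[
\tU\dot{\tW}=\tU^2\tW-\tU\tW\tW^t\tU\tW.
\]
The $\tU^2\tW$ terms cancel, leaving
\[
\dot{\mathcal U}=-\tR\tW-(\tU\tW)\tW^t(\tU\tW)=-\tR\tW-\mathcal U\tW^t\mathcal U,
\]
which is \eqref{eq:4.36}.

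The main point requiring care is the validity of \eqref{eq:4.31} and of the identity $\mathcal U=\dot{\tJ}\tJ^{-1}\tW$ for all $t\in[0,1]$. Both rest on \eqref{eq:4.27}, that is, on $\operatorname{Im}\tB(t)=\operatorname{Im}\tW(t)$, together with the invertibility of $\tJ(t)$ along the minimizing geodesic. These were established in the preceding lemmas and we take them as given. We also need $\tW$ to be differentiable in $t$, which holds since $E_i(t)$ is chosen smoothly with $\nabla_tE_i\perp T_{\gamma(t)}\Sigma_t$. With these inputs the computation is purely algebraic.
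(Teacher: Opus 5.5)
Your proof is correct and follows essentially the same route as the paper: differentiate $\mathcal U=\dot{\tJ}\tJ^{-1}\tW$, use the Jacobi equation, then substitute $\dot{\tW}=(\Id_n-\tW\tW^t)\,\mathcal U$ from (4.31) so that the $\tU^2\tW$ terms cancel and leave $-\mathcal U\tW^t\mathcal U$. The only difference is bookkeeping: you package the first two terms via the Riccati equation $\dot{\tU}=-\tU^2-\tR$, while the paper expands $\ddot{\tJ}\tJ^{-1}\tW$ and $-\dot{\tJ}\tJ^{-1}\dot{\tJ}\tJ^{-1}\tW$ separately and groups them through $\mathcal U-\mathcal U^\perp=\tW\tW^t\mathcal U$.
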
}

\begin{proof}
    By Lemma~\hyperlink{L:4.4}{4.4}, we have
    \begin{align*}
    \label{eq:4.37}
        \dot{\mathcal{U}}
        =\ddot{\tJ} \tJ^{-1} \tW-\dot{\tJ} \tJ^{-1}\dot{\tJ} \tJ^{-1}\tW+\dot{\tJ}\tJ^{-1}\dot{\tW}.
        \tag{4.37}
    \end{align*}
By the Jacobi field equation \eqref{eq:2.15}, we get $\ddot{\tJ} \tJ^{-1} \tW=-\tR \tJ \tJ^{-1} \tW=-\tR \tW$.
Moreover, by Lemma~\hyperlink{L:4.4}{4.4} again and \eqref{eq:4.31}, we obtain
\begin{align*}
    &\kern-2em \dot{\tJ} \tJ^{-1}(\dot{\tJ} \tJ^{-1} \tW-\dot{\tW}) \\
    &=\dot{\tJ} \tJ^{-1}(\mathcal{U}-\mathcal{U}^\perp)
    =\dot{\tJ} \tJ^{-1}\mathcal{U}^\top
    =\dot{\tJ} \tJ^{-1} \tW \tW^t\mathcal{U}
    =\mathcal{U} \tW^t\mathcal{U}.
\end{align*}
Substituting these identities into \eqref{eq:4.37} yields \eqref{eq:4.36}.
This finishes the proof. 
\end{proof}

From Lemma~\hyperlink{L:4.6}{4.6}, we obtain
\begin{align*}
\label{eq:4.38}
\tW^t \dot{\mathcal U}
=
-\tW^t \tR \tW
-
(\tW^t \mathcal U)^2.
\tag{4.38}
\end{align*}
Combining identity \eqref{eq:4.38} with Lemma~\hyperlink{L:4.5}{4.5}, we conclude that
\begin{align*}
\label{eq:4.39}
\bigl(\tr(\tW^t\mathcal U)\bigr)'
&=
\tr(\dot{\tW}^t \dot{\tW})
-
\tr(\tW^t \tR \tW)
-
\tr\bigl((\tW^t\mathcal U)^2\bigr) \tag{4.39} \\
\label{eq:4.40}
&=
\|\mathcal U^\perp\|^2
-
\Ric_k(T_{\gamma(t)}\Sigma_t,\dot\gamma)
-
\tr\bigl((\tW^t\mathcal U)^2\bigr). \tag{4.40}
\end{align*}
Equations \eqref{eq:4.39} and \eqref{eq:4.40} are precisely the equations obtained in \cite[Proposition~4.4]{ketterer_sectional_2018}. Notice that all the matrices involved are of size \(k \times k \).

By contrast, our approach considers the full \(n\times n\) matrix
$\tU=\dot{\tJ} \tJ^{-1}$,
which satisfies
\begin{align*}
\dot{\tU}
&=
-\tR- \tU^2,\\
\bigl(\tr_{\widetilde{\Sigma}(t)} \tU \bigr)'
&=
-\tr_{\widetilde{\Sigma}(t)} \tR
-\tr_{\widetilde{\Sigma}(t)}(\tU^2).
\end{align*}
Another fundamental difference is that our formulation is based on parallel \(k\)-planes \(\widetilde{\Sigma}(t)\) along \(\gamma\), whereas in \cite{ketterer_sectional_2018}, Ketterer and Mondino considered the tangent \(k\)-planes
$T_{\gamma(t)}\Sigma_t$,
which are generally not parallel along \(\gamma\).


\newpage
\bibliographystyle{alpha}
\bibliography{ref.bib}

\Address

\end{document}